\renewcommand{\k}{\Bbbk}
\newcommand{\N}{\mathbb{N}}
\newcommand{\Z}{\mathbb{Z}}
\def \B {\mathcal B}
\def \C {\mathcal C}
\def \F {\mathcal F}
\def \J {\mathcal J}
\def \M {\mathcal M}
\def \P {\mathcal P}
\def \T {\mathcal T}
\def \U {\mathcal U}
\def \V {\mathcal V}
\renewcommand{\b}{\mathtt{b}}
\renewcommand{\c}{\mathtt{c}}
\renewcommand{\i}{\mathtt{i}}
\renewcommand{\j}{\mathtt{j}}
\renewcommand{\r}{\mathtt{r}}
\newcommand{\s}{\mathtt{s}}
\def \Si{\mathfrak S}
\def \G{\Gamma}
\def \Sym{\mathrm{Sym}}
\def \L{\Lambda}
\def \op{\mathrm{op}}
\def \tr{\mathsf{tr}}
\def \tw{\mathsf{tw}}
\def \seq{\mathtt{seq}}
\def \Tab{\mathtt{Tab}}
\def \st{\mathtt{St}}
\newcommand{\tensor}{\otimes}
\newcommand{\bs}{\boldsymbol}
\newcommand{\<}{\langle}
\renewcommand{\>}{\rangle}
\newcommand{\Hom}{\mathrm{Hom}}
\newcommand{\End}{\mathrm{End}}
\newcommand{\Mat}{\mathrm{M}}
\newcommand{\VV}{\mathrm{V}}
\newcommand{\Vn}{\mathrm{V}_{\hspace{-1pt}n}}
\newcommand{\hs}[1]{\hspace{#1}}
\newcommand{\hm}[1]{\hspace{-.0#1cm}}
\newcommand{\hp}[1]{\hspace{.0#1cm}}
\newcommand{\set}[1]{[#1]}
\renewcommand{\rm}[1]{\mathrm{#1}}
\newcommand{\bm}[1]{\mathbf{#1}}
\mathchardef\hyphen="2D
\renewcommand{\-}{\hyphen\hyphen}
\newcommand{\Amod}{A\text{-mod}}
\newcommand{\modA}{\text{mod-}A}
\newcommand{\Bmod}{B\text{-mod}}
\newcommand{\modB}{\text{mod-}B}
\newcommand{\lmod}{\hyphen \mathrm{mod}}
\newcommand{\isoto}{\xrightarrow{\,\sim\,}}
\newcommand{\equi}{\stackrel{\sim}{\longrightarrow}}
\newcommand{\onto}{\twoheadrightarrow}
\newcommand{\ml}[1]{{\displaystyle \mathlarger{#1}}}
\newtheorem{theorem}{Theorem}[section]
\newtheorem{lemma}[theorem]{Lemma}
\newtheorem{proposition}[theorem]{Proposition}
\newtheorem{corollary}[theorem]{Corollary}
\theoremstyle{definition}
\newtheorem{definition}[theorem]{Definition}
\newtheorem{remark}[theorem]{Remark}
\newtheorem{example}[theorem]{Example}
\numberwithin{equation}{section}
\title[Cellularity of generalized Schur algebras]
{Cellularity of generalized Schur algebras\\ via Cauchy decomposition}
\author{Jonathan D.~Axtell}
\thanks{This paper was supported by the National Research Foundation of Korea (NRF) funded by the Ministry of Science (NRF-2017R1C1B5018384).} 
\address{Sungkyunkwan University, Suwon 16419, Republic of Korea} 
\email{jaxtell@skku.edu}
\begin{document}

\begin{abstract}
We describe a generalization of Hashimoto and Kurano's 
Cauchy filtration for divided powers algebras. 
This filtration is then used to provide   
a cellular structure for generalized Schur algebras
associated to an arbitrary cellular algebra, $A$. 
Applications to the cellularity of  
wreath product algebras $A\wr \Si_d$  
are also considered.  
\end{abstract}

\maketitle

\section{Introduction}

Let $\k$ be a noetherian integral domain and suppose $A$ is a cellular 
$\k$-algebra \cite{GL}. 
Then Geetha and Goodman \cite{GG}
showed that the wreath product algebra 
\[A\wr \Si_d  = A^{\tensor d} \rtimes \k\Si_d\] 
is cellular,  provided that all of the cell ideals of $A$ are cyclic. 
On the other hand, 
the generalized Schur algebras $S^A(n,d)$ were
defined by Evseev and Kleshchev   \cite{EK1, EK2} 
in order to prove the Turner double conjecture.
These algebras are related to 
wreath product algebras by a
generalized Schur-Weyl duality established in \cite{EK1}. 
\smallskip

In this paper, we describe a cellular structure for the 
generalized Schur algebra $S^A(n,d)$  
for an arbitrary cellular algebra $A$ 
and for all integers $n,d\geq 0$. 
This extends some results of 
Kleshchev and Muth \cite{KM1, KM2, KM3}. 
It follows, for example, from results of \cite{KM3}
that the algebra 
$S^A(n,d)$ is cellular  
for certain algebras $A$ which are both cellular and quasi-hereditary. 
We note that for such algebras, the cell ideals are automatically cyclic.
The method used in this paper, however, does not require any additional
assumptions on the cellular algebra. 
\smallskip

Our approach is motivated by that of  \cite{Krause1}, 
where Krause used the Cauchy decomposition of divided powers 
\cite{ABW, HK} 
to describe the highest weight structure of categories of 
strict polynomial functors. 
As Krause mentions, this leads to an alternate proof of 
the fact that classical Schur algebras $S^\k(n,d)$ are quasi-hereditary,  
which follows by a Morita equivalence.
As we will see, this approach can similarly be used  
 to describe cellular structure. 
\smallskip

We begin by constructing a generalized Cauchy filtration for the 
divided powers $\Gamma^d J$ of a given $\k$-module, $J$,  
which we assume is equipped with a filtration 
\begin{equation*}
 0 = J_1 \subset \cdots \subset J_r = J
\end{equation*}
such that $J_j/J_{j-1} \cong U_j\otimes_\k V_j$, 
for some free $\k$-modules $U_j, V_j$ of finite rank. 
Our first main
 result is a generalized Cauchy decomposition formula
(Theorem \ref{thm:gen_Cauchy}), 
which provides 
a filtration of $\Gamma^d J$ such that 
the associated graded object is a direct sum 
of modules of the  form 
\[\bigoplus_{{\bs \lambda} \in \bs \L} 
\U_{\bs \lambda} \tensor_\k \V_{\bs \lambda},\]
where 
$\U_{\bs \lambda}, \V_{\bs \lambda}$
are {\em generalized Weyl modules} defined in Section \ref{ss:gen_Weyl} 
and $\boldsymbol{\Lambda}$ denotes 
a set of $r$-multipartitions. 
\smallskip

The generalized Schur algebra $S^A(n,d)$ may be identified as the 
$d$-th divided power $\Gamma^d \Mat_n(A)$, 
where $\Mat_n(A)$ is the algebra of size $n$ matrices over $A$. 
We are thus able to use the above decomposition, 
together with K\"onig and Xi's characterization of cellular algebras in \cite{KX}, 
to prove our second main result  
(Theorem \ref{thm:cellular}) 
which shows that generalized Schur algebras are cellular. 
In Example \ref{ex:zig}, we describe a corresponding cellular basis 
explicitly for a particular case, $S^Z(1,2)$, where $Z$ 
is a zig-zag algebra 
(considered as an ordinary algebra rather than a superalgebra, 
as in \cite{KM3}).  
\smallskip

As a consequence of generalized Schur-Weyl duality, 
Corollary \ref{wreath} shows that the 
wreath product algebras $A\wr \Si_d$ are cellular for an arbitrary 
cellular algebra $A$.  
This provides an alternate proof of the 
main result in \cite{GG}, for the case where $A$ is cyclic cellular, 
and a more  
recent result of Green \cite{RGr}, for the general case 
where $A$ is an arbitrary cellular algebra.

\section{Preliminaries} 
Assume throughout that $\k$ is a commutative ring, 
unless mentioned otherwise.
The notation $\sharp$ is used for the cardinality of a set.

\subsection{Weights, partitions, and sequences} 
\label{ss:definition}
Write $\N$ and $\N_0$ to denote the sets of 
positive and nonnegative integers, respectively, 
with the usual total order. 
More generally, suppose that $\B$ is a countable 
totally ordered set which is bounded below. 
Any elements $a,b \in \B$ determine an interval  
\[ \set{a,b} := \{c\in \B \mid a \leq c \leq b\} \]
which is empty unless $a\leq b$.  
\smallskip

A {\em weight} ({\em on $\B$}) is a sequence 
of nonnegative integers  $\mu = (\mu_b)_{b\in \B}$ 
such that $\mu_b =0$ for almost all $b$. 
Let $\L(\B)$ denote the set of all weights on $\B$. 
A {\em partition} ({\em on $\B$}) is a weight $\lambda \in \L(\B)$ such that
\[b<c \ \text{ implies }\ \lambda_b \geq \lambda_c, 
\ \, ^\forall b,c\in \B.\]
The subset of partitions is denoted 
 $\L^+(\B) \subset \L(\B)$. 
The size of a weight $\mu$ is the integer 
$|\mu| := \sum_{b} \mu_b$. 
Let $\L_d(\B)$ denote the set of all weights 
of size $d$ and write  
\[\L^+_d(\B) := \L^+(\B) \cap \L_d(\B) \]
for each $d\in \N_0$. 
\smallskip

\begin{remark}\label{notation}
In this notation and elsewhere, 
we will use the convention of replacing 
an argument of the form  
 $\set{1,n}$ by ``$n$" for any $n\in \N_0$, 
so that for example 
$\Lambda(n)$ denotes the set 
$\Lambda(\set{1,n})$ of 
weights of the form 
$\mu=(\mu_1, \dots, \mu_n)$. 
\end{remark}
We also identify each set $\L(n)$ 
as a subset of $\L(\N)$ in the obvious way and write 
\[ l(\mu) := \mathrm{min}\{n\in \N_0\mid \mu \in \L(n)\} \]
to denote the 
 length 
of a weight $\mu \in \L(\N)$.
For example, the length $l(\lambda)$ 
of a partition 
$\lambda= (\lambda_1, \lambda_2, \dots)$ in $\L^+(\N)$ 
equals the number of positive parts, $\lambda_i\in \N$.

\begin{definition}\label{lex1}
Let $d\in \N_0$. 
Recall that the {\em lexicographic ordering} on $\L_d(\N)$ 
is the total order defined by setting 
$\lambda \leq \mu$ if 
$\lambda_j \leq \mu_j$ whenever   
$\lambda_i=\mu_i$ for all $i<j$.   
We use the notation $\preceq$ to denote the restriction 
of $\leq$ to the subset 
  $\Lambda^+_d(\N)$ of partitions of size $d$. 
\end{definition}
\smallskip

Now fix $d\in \N$, 
and write $\seq^{d}(\B)$ to denote the set of all functions 
\[\b: \set{1,d} \to \B.\] 
We identify  $\seq^{d}(\B)$ with $\B^d$ 
by setting $\mathtt b = (b_1, \dots, b_d)$,  
with $b_i= \mathtt b(i)$ for all $i\in \set{1,d}$.
The symmetric group $\Si_d$ of permutations of $\set{1,d}$ 
acts on $\seq^{d}(\B)$ from the right via composition. 
We write $\mathtt b \sim \mathtt c$ 
if there exists $\sigma \in \Si_d$ with
$\mathtt c = \mathtt b \sigma$. 
\smallskip

The {\em weight} of a sequence $\b\in \seq^d(\B)$ 
is the element of $\Lambda_d(\B)$ defined by 
\begin{equation*}
\mu(\b) := (\mu_{c})_{c\in \B}, 
\quad \text{where}\quad
\mu_{c} = \sharp \{i  \mid b_i =c\}\ \ ^\forall c\in \B.
\end{equation*}
We note the following elementary result. 

\begin{lemma}\label{mu}
The map 
$\mu: \seq^{d}(\B) \to  \L_d(\B)$, 
sending $\mathtt b \mapsto \mu({\mathtt b})$, 
induces a bijection:
$\seq^d(\B)/\Si_d \ 
 \simeq\ \L_d(\B).$
\end{lemma}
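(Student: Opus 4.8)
The plan is to show that the weight map $\mu$ is constant on $\Si_d$-orbits and that two sequences with the same weight lie in a common orbit; together with surjectivity of $\mu$, this yields the claimed bijection on the quotient.

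First I would verify that $\mu$ descends to the quotient: if $\mathtt c = \mathtt b\sigma$ for some $\sigma\in\Si_d$, then for each $c\in\B$ the set $\{i\mid c_i=c\}$ is the image under $\sigma^{-1}$ of $\{i\mid b_i=c\}$, hence has the same cardinality, so $\mu(\mathtt b)=\mu(\mathtt c)$. Thus $\mu$ factors through a well-defined map $\bar\mu:\seq^d(\B)/\Si_d\to\L_d(\B)$. Next, for surjectivity, given any weight $\lambda=(\lambda_c)_{c\in\B}\in\L_d(\B)$, since $\lambda_c=0$ for almost all $c$ and $\sum_c\lambda_c=d$, I can list the elements of $\B$ with $\lambda_c>0$ and build a sequence $\mathtt b$ that takes each such value $c$ exactly $\lambda_c$ times (for instance in weakly increasing order of $\B$); then $\mu(\mathtt b)=\lambda$.

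For injectivity of $\bar\mu$, suppose $\mu(\mathtt b)=\mu(\mathtt c)=\lambda$. Then for each $c\in\B$ the fibres $\mathtt b^{-1}(c)$ and $\mathtt c^{-1}(c)$ are subsets of $\set{1,d}$ of equal size $\lambda_c$, so I can choose a bijection between them; since the fibres partition $\set{1,d}$ as $c$ ranges over $\B$, assembling these bijections gives a permutation $\sigma\in\Si_d$ with $c_{\sigma(i)}=b_i$, i.e. $\mathtt c\sigma^{-1}=\mathtt b$ (up to the bookkeeping of which side the action is on), so $\mathtt b\sim\mathtt c$. Hence $\bar\mu$ is injective, and being also surjective it is the desired bijection.

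I do not expect any serious obstacle here; the only mild care needed is in the combinatorial bookkeeping of assembling partial bijections between fibres into a genuine element of $\Si_d$, and in keeping the left/right conventions for the $\Si_d$-action consistent with the definition $\mathtt c=\mathtt b\sigma$ given above. Everything else is a routine unwinding of definitions.
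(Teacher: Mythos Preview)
Your proof is correct and follows essentially the same approach as the paper: both establish surjectivity by building a weakly increasing sequence realizing a given weight, and both reduce injectivity of the induced map to the statement that $\mathtt b\sim\mathtt c$ if and only if $\mu(\mathtt b)=\mu(\mathtt c)$. The only difference is that the paper leaves this last equivalence as ``easy to see'' while you spell out the fibre-matching argument explicitly.
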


\begin{proof}
We may assume that $\B$ is nonempty. 
Since $\B$ is bounded below, 
it is possible to write the elements explicity in the form 
\begin{equation}\label{setB}
\B = \{b^{\B}_1 < b^{\B}_2 < \dots\ \}.
\end{equation} 
To show that the map 
$\mathtt b \mapsto \mu({\mathtt b})$ 
 is surjective, 
note that a right inverse is given by   
\begin{equation*}
 \L_d(\B) \to \seq^{d}(\B): \ 
\mu \mapsto \mathtt b_\mu := 
(b^\B_1, \dots, b^\B_1, b^\B_2, \dots, b^\B_2, \dots) 
\end{equation*} 
where $b^\B_1$ occurs with multiplicity 
$\mu_{b^\B_1}$, etc. 
Finally, it is easy to see that 
$\mathtt b \sim \mathtt c$ if and only if 
$\mu(\mathtt b)= \mu(\mathtt c)$, 
which completes the proof. 
\end{proof}
\smallskip

Suppose more generally that $\B_1, \dots, \B_r$ 
is a collection of bounded below, totally ordered sets. 
We again consider the product 
$\B=\B_1\times \dots\times \B_r$
as a bounded below, totally ordered set 
via the lexicographic ordering. 

The symmetric group $\Si_d$ 
acts diagonally on the following product 
\begin{equation*}
 \seq^d(\B_1, \dots, \B_r) 
:= \seq^d(\B_1) \times \dots \times  \seq^d(\B_r). 
\end{equation*}
Notice that the bijection 
\[\theta: \seq^d(\B_1, \dots, \B_r) 
\simeq \seq^d(\B)\]
defined by  
\[\theta(\b^{(1)}, \dots, \b^{(r)}):\ 
i \mapsto (b^{(1)}_i, \dots, b^{(r)}_i), \quad ^\forall i\in \set{1,d},\]
is $\Si_d$-equivariant. 
It thus follows as an immediate consequence of Lemma \ref{mu}
that there is a bijection 
\begin{equation}\label{mu2}
\seq^d(\B_1, \dots, \B_r)/\Si_d
\, \simeq \, \L_d(\B), 
\end{equation}
where $\seq^d(\B_1, \dots, \B_r)/\Si_d$ 
denotes the set of diagonal $\Si_d$-orbits.

\subsection{Multipartitions} 
Suppose $d\in \N_0$ and let $\B_1, \dots, \B_r$ 
be as above. 
Then we use the following notation for the product
\[ \L^+(\B_1, \dots, \B_r) 
:= \, \L^+(\B_1) \times \dots \times \L^+(\B_r). \]
whose elements are called {\em $r$-multipartions} and denoted  
${\bs \lambda} = (\lambda^{(1)}, \dots, \lambda^{(r)})$. 
The {\em weight} of an $r$-multipartion $\bs \lambda$ 
is the element of $\L(r)$ defined by 
\[ |\bs \lambda| := (|\lambda^{(1)}|, \dots, |\lambda^{(r)}|).\]
We call $||\bs \lambda|| := \sum |\lambda^{(j)}|$ 
the {\em total weight} (or {\em size}) of $\bs \lambda$. 
\smallskip

Given $\mu\in \Lambda(r)$ and $d\in \N_0$, we write 
\[\L^+_{\mu}(\B_1, \dots, \B_r) \, := \, 
\L^+_{\mu_1}(\B_1) \times \dots \times \L_{\mu_r}^+(\B_r)\]  
and
\[\L^+_d(\B_1, \dots, \B_r) 
\, := \bigsqcup_{\nu \in \L_d(r)} \L^+_{\nu}(\B_1, \dots, \B_r) \]
to denote the subset of $r$-multipartions of weight $\mu$,  
resp.~total weight $d$. 
\smallskip

In the special case where $\B_j=\N$ for $j\in \set{1,r}$, note that  
\[\L^+(\N, \dots, \N) = \L^+(\N)^r.\] 
We then use the following notation 
\[\L^+_d(\N)^r
:= \L_d^+(\N, \dots, \N), \qquad 
\L^+_{\mu}(\N)^r 
:=\L_\mu^+(\N, \dots, \N)\]
for $d\in \N_0$ and $\mu\in \L_r(d)$, respectively. 
\smallskip

The next definition describes a total order on the set of $r$-multipartitions 
of a fixed total weight. 

\begin{definition}\label{lex2}
Suppose $d,r\in \N$. 
Then $\Lambda_d^+(\N)^r$ has a total order $\preceq$ defined 
as follows. 
For $r$-multipartitions 
 $\bs\mu, \bs\lambda \in \L^{+}_\nu(\N)$ 
of weight $\nu\in \Lambda_d(r)$, 
we set  
$\bs \lambda \preceq \bs \mu$ if 
\[\lambda^{(j)} \preceq \mu^{(j)},\text{ whenever } 
\lambda^{(i)} = \mu^{(i)} \text{ for all } i<j.\] 
We then extend $\preceq$ to all of 
$\L^{+}_d(\N)^r$ 
by setting $\bs\lambda \prec \bs \mu$
whenever $|\bs\lambda| < |\bs \mu|$ in 
the lexicographic ordering on $\L_d(r)$. 
\end{definition} 

Suppose $n_1, \dots, n_r\in \N_0$ and $d\in \N$. 
Recalling the notation from Remark \ref{notation}, 
we identify the set of $r$-multipartions 
\[ \L^+(n_1, \dots, n_r)
:= \L^+(\set{1,n_1}, \dots, \set{1,n_r}) \] 
as a subset of $\L^+(\N)^r$ 
and view $\preceq$ as a total order on 
$\L^+_d(n_1, \dots, n_r)$ by restriction.

\subsection{Finitely generated projective modules}
Let $\M_\k$ denote the category of all $\k$-modules 
and $\k$-linear maps.  The full subcategory of finitely generated 
projective $\k$-modules is denoted $\P_\k$. 

Given $M,N \in \M_\k$, we write $M\tensor N= M\tensor_\k N$ 
and $\Hom(M,N) = \Hom_\k(M,N)$. 
Also write $\End(M)$ to denote the $\k$-algebra $\Hom(M,M)$. 
If $M\in \P_\k$, we let $M^\vee= \Hom(M,\k)$ denote the $\k$-linear dual. 
For any $M, M',N, N'\in \P_\k$, there is an isomorphism 
\begin{equation}\label{eq:isom}
\Hom(M\tensor N, M'\tensor N') \cong \Hom(M, M')\tensor \Hom(N, N')
\end{equation}
which is natural with respect to composition.

\subsection{Divided and symmetric powers}
Let $d\in \N$. 
Given $M\in \P_\k$, there is a right action of 
the symmetric group $\Si_d$ 
on the tensor power $M^{\tensor d}$ given by permuting tensor factors. 
We define the {\em $d$-th divided power} of $M$ 
to be the invariant submodule 
\[ \G^dM := (M^{\tensor d})^{\Si_d}.\] 
Similarly, the coinvariant module is denoted 
\[ \Sym_d M := (M^{\tensor d})_{\Si_d}\] 
and called the {\em $d$-th symmetric power} of $M$.
It follows by definition that 
\begin{equation}\label{symmetric}
\Gamma^d(M)^\vee \cong \mathrm{Sym}_d(M^\vee).
\end{equation}
We also set $\Gamma^0M= \mathrm{Sym}_0M= \k$. 

Note that the isomorphism (\ref{symmetric}) is usually taken  
as the definition of $\Gamma^d M$ (cf. \cite{ABW}), 
while we have used the equivalent definition 
from \cite{Krause1} in terms of symmetric tensors.

\subsection{The divided powers  algebra}
The category $\M_\k$ (resp.~$\P_\k$) is a symmetric monoidal category with 
symmetry isomorphism  
\begin{equation}\label{symmetry}
\tw: M\tensor N \xrightarrow{\sim} N\tensor M
\end{equation}
defined by  $x\tensor y \mapsto y\tensor x$, for all $x\in M, y\in N$.
\smallskip

Suppose $M\in \P_\k$.  
Then 
\[\Gamma (M) := \bigoplus_{d\in \N_0} \Gamma^dM\] 
is an ($\N_0$-graded) commutative algebra 
called the {\em  divided powers algebra},
with multiplication defined on homogeneous components 
via the shuffle product: 
for $x \in \Gamma^{d} M$ and $y \in \Gamma^{e}M$, 
define 
\begin{align*}
x \ast y := \sum_{\sigma\in 
\Si_{d+ e}^{d, e}} 
(x\tensor y)\sigma 
\end{align*}
where $\Si_{d+e}^{d,e}$ is the quotient group
 $\Si_{d+e}/\Si_{d}\times \Si_{e}$.
For example, we have   
$x^{\tensor d} \ast x^{\tensor e} = 
 \binom{d+e}{d}\, 
x^{\tensor(d+e)}$ 
for any $x \in M$. 
\smallskip

There is also a comultiplication, 
 $\Delta: \G (M) \to \G (M) \tensor \G (M)$, which is
the $\N_0$-homogenous map 
whose graded components 
\[\Delta: \Gamma^d M 
\to  \Gamma^{d-c}M \tensor \Gamma^{c}M\] 
are defined as the inclusions 
\[(M^{\tensor d})^{\Si_d} 
\hookrightarrow (M^{\tensor d})^{\Si_{d-c} \times \Si_{c}}\]
induced by the embeddings 
$\Si_{d-c}\times \Si_{c}\hookrightarrow \Si_d$,
for $c\in \set{0,d}$. 
These maps, together with the unit, 
$\k=\Gamma^0M \hookrightarrow \Gamma (M)$, 
and the counit, 
$\Gamma(M) \onto \Gamma^0M$
(projection onto  degree 0),
make $\Gamma (M)$ into a bialgebra.

\subsection{Decompositions}
The {\em symmetric algebra}  $S(M)$ is defined as 
the free commutative $\k$-algebra
generated by $M$ and 
has a decomposition 
 \[ S(M) = \bigoplus_{d\in \N_0} \mathrm{Sym}_d M. \] 
It follows that $S(-)$ defines a functor from $\P_\k$ 
to the category of all commutative 
$\k$-algebras, which preserves coproducts. 
Hence $S(M)\tensor S(N) \cong  S(M\oplus N)$, 
and by the duality \eqref{symmetric} there is an isomorphism
\begin{equation}\label{eq:expon}
\G (M) \tensor \G (N) \simeq \G(M\oplus N).
\end{equation}  
The isomorphism (\ref{eq:expon}) is given explicitly 
by restricting the multiplication map $x\tensor y \mapsto x \ast y$, 
where $\G(M)$, $\G( N)$ are 
considered as subalgebras of $\G(M\oplus N)$. 
It follows that for each $d\in \N_0$ there is  
a decomposition 
\begin{equation}\label{eq:expon2}
\Gamma^d(M\oplus N) = 
\bigoplus_{0\leq c\leq d} \Gamma^c(M)\ast \Gamma^{d-c}(N) 
\end{equation}
where
$\G^c (M)\ast\G^{d-c}(N)$ 
denotes the image of 
$\Gamma^c (M)\tensor \Gamma^{d-c}(N)$ 
 under (\ref{eq:expon}).
\smallskip

Note that $\Gamma^d \k \cong \k$ for all $d\in \N_0$.  
Thus, given a free $\k$-module $V$ of finite rank, 
it follows by induction from  (\ref{eq:expon2}) 
 that the  divided power $\Gamma^d V$ 
is again a free $\k$-module of finite rank.  
For example, suppose $V$ has a finite ordered  
$\k$-basis $\{x_b\}_{b\in \B}$. 
Then $\Gamma^d V$ 
has the following $\k$-basis 
\begin{equation}\label{basis}
\Big\{
x^{\mu} := \prod_{b\in \B}x_b^{\tensor \mu_b}  \ \ml{\ml{|}}\  \mu \in \L_d(\B)
\Big\} 
\end{equation}
where the product denotes multiplication in $\Gamma(V)$. 
\smallskip

The basis \eqref{basis}  can also be parameterized 
by elements of $\seq^d(\B)$. 
First notice that the the tensor power $V^{\tensor d}$ has 
the following basis 
\[ \{x_{\tensor\b} := x_{b_1}\tensor \dots \tensor x_{b_d}    \, \Big|\ 
\b \in \seq^d(\B)\}. \]
Given $\b\in \seq^d(\B)$, 
we then define 
$x_\b :=\sum_{{\tt b \sim c}} 
x_{\tensor{\tt c}}$. 
Notice that $x_\b = x_{\mu(\tt b)}$.  
It then follows from Lemma \ref{mu} that the set 
\begin{equation}\label{basis2}
\{x_\b \mid \b \in \seq^d(\B)/\Si_d\}\end{equation}  
is also a basis of $\Gamma^d V$,  
indexed by any complete set of orbit representatives. 
\smallskip

\subsection{Polynomial functors}\label{ss:functor}
We recall the definitions of some well known 
polynomial 
endofunctors on the category $\P_\k$ 
along with their associated natural transformations. 
\smallskip

Let $d\in \N_0$. Then recall the functor $\tensor^d: \P_\k \to \P_\k$ sending   
$M \mapsto M^{\tensor d}$,  whose action on morphisms is defined by 
\[ \tensor^d_{M,N}(\varphi) :=\,  
 \varphi \tensor \cdots \tensor \varphi: 
M^{\tensor d} \to N^{\tensor d} \] 
for any $\varphi\in \Hom(M,N)$.
\smallskip

It follows easily from \eqref{eq:expon2} 
that the divided power $\G^d M$
of a finitely-generated, projective $\k$-module $M\in \P_\k$ 
is again finitely-generated and projective. 
This yields a functor 
$\G^d: \P_\k \to \P_\k$ 
which is 
a subfunctor of $\tensor^d$.  
In particular, the 
action of $\G^d$ on morphisms 
is defined by restriction  
\[\G^d_{M,N}(\varphi) 
:= (\varphi^{\tensor d})|_{\G^d M}:
\Gamma^dM \to \Gamma^d N\]
for any $\varphi\in \Hom(M,N)$. 
\smallskip

Now let $S,T:\P_\k\to \P_\k$ 
be an arbitrary pair of functors. 
Then the tensor product $-\tensor - $ 
induces the following bifunctors 
\[S\boxtimes T, \ \  
T(-\tensor-)
\ :\  \P_\k\times \P_\k \to \P_\k\]
which are respectively defined by 
\[S\boxtimes T := (-\tensor-)\circ (S\times T), \ \qquad 
T(-\tensor-) := T\circ (-\tensor -).\] 
We also have the 
``object-wise" tensor product 
 $S\tensor T: \P_\k \to \P_\k$ defined by 
\begin{equation}\label{tensor}
S\tensor T := (S\boxtimes T)\circ \delta 
\end{equation}
where $\delta:\P_\k\to \P_\k\times \P_\k$ denotes the 
diagonal embedding: $M\mapsto (M,M)$.
\smallskip

Now suppose $M, N\in \P_\k$.  
As in \cite{Krause1}, define 
$\psi^d=\psi^d(M,N)$ to be 
the unique map which makes the  
following square commute: 
\begin{equation}\label{commute}
\begin{tikzcd}[row sep=large]
\Gamma^d M \tensor \Gamma^d N  
 \ar[d, tail ] \ar[r, "\psi^d"] 
& \Gamma^d(M\tensor N)
\ar[to=Z, d, tail, ]\\
M^{\tensor d} \tensor N^{\tensor d} 
\ar[r, "\sim"]
 &
(M \tensor N)^{\tensor d} 
\end{tikzcd}
\end{equation}
The following lemma is easy to check. 
\smallskip

\begin{lemma}\label{psi}
\begin{enumerate}
\item 
The maps $\psi^d(M,N)$ 
form a natural transformation of bifunctors  
\[\psi^d: \Gamma^d\boxtimes \Gamma^d 
\to \Gamma^d(-\tensor-).\]
\item
If $M,N\in \P_\k$, then the following 
diagram commutes  
\begin{equation*}
\begin{tikzcd}[column sep=huge] 
\Gamma^d M \tensor \Gamma^d N 
\ar[r, "{\psi^d(M,N)}"]  \ar[d, "\tw"' ]
 & 
\Gamma^d(M\tensor N) 
\ar[d, "\Gamma^d(\tw)"] 
\\
\Gamma^d N \tensor \Gamma^d M 
\ar[r, "{\psi^d(N,M)}"]  
 & 
\Gamma^d(N\tensor M) 
\end{tikzcd}
\end{equation*}
where $\tw$ permutes tensor factors 
as in (\ref{symmetry}). 
\end{enumerate}
\end{lemma}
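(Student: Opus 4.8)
The plan is to reduce both statements to purely formal properties of the tensor-power functor $\tensor^d$, using two facts from the excerpt: $\Gamma^d$ is a subfunctor of $\tensor^d$, and the inclusion $\Gamma^d(-)\hookrightarrow \tensor^d$ is a \emph{monomorphism}. Write $c_{M,N}\colon M^{\tensor d}\tensor N^{\tensor d}\isoto (M\tensor N)^{\tensor d}$ for the canonical rearrangement isomorphism along the bottom of \eqref{commute}. Being assembled from the associativity and symmetry constraints of $\P_\k$, the family $c_{-,-}$ is a natural isomorphism of bifunctors; moreover $c_{M,N}$ is equivariant for the diagonal $\Si_d$-action on the source (permuting the $M$-factors and the $N$-factors by one and the same $\sigma$) and the permutation action on the target. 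This equivariance is exactly what makes the composite $\Gamma^dM\tensor\Gamma^dN\hookrightarrow M^{\tensor d}\tensor N^{\tensor d}\xrightarrow{\,c_{M,N}\,}(M\tensor N)^{\tensor d}$ land in the invariants $\Gamma^d(M\tensor N)$, so that $\psi^d(M,N)$, the unique map making \eqref{commute} commute, is simply $c_{M,N}$ with domain restricted and codomain corestricted.

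For part (1), fix $\varphi\in\Hom(M,M')$ and $\chi\in\Hom(N,N')$ and consider the naturality square with horizontal edges $\psi^d(M,N)$, $\psi^d(M',N')$ and vertical edges $\Gamma^d\varphi\tensor\Gamma^d\chi$, $\Gamma^d(\varphi\tensor\chi)$. Since $\Gamma^d(M'\tensor N')\hookrightarrow (M'\tensor N')^{\tensor d}$ is mono, it is enough to check that the two composites agree after postcomposing with this inclusion. Invoking \eqref{commute} at $(M,N)$ and at $(M',N')$, and recalling that $\Gamma^d\varphi$, $\Gamma^d\chi$, $\Gamma^d(\varphi\tensor\chi)$ are the restrictions of $\varphi^{\tensor d}$, $\chi^{\tensor d}$, $(\varphi\tensor\chi)^{\tensor d}$, the desired identity collapses to
\[
c_{M',N'}\circ(\varphi^{\tensor d}\tensor\chi^{\tensor d})=(\varphi\tensor\chi)^{\tensor d}\circ c_{M,N},
\]
which is precisely the naturality of $c_{-,-}$ in the pair $(\varphi,\chi)$. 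Part (2) is handled in the same way: postcompose both paths with the monomorphism $\Gamma^d(N\tensor M)\hookrightarrow(N\tensor M)^{\tensor d}$, apply \eqref{commute} together with the facts that $\Gamma^d(\tw)$ is the restriction of $\tw^{\tensor d}$ and that the left-hand $\tw$ is the restriction of $\tw_{M^{\tensor d},N^{\tensor d}}$, and the claim reduces to
\[
\tw_{M,N}^{\tensor d}\circ c_{M,N}=c_{N,M}\circ\tw_{M^{\tensor d},N^{\tensor d}}
\]
in $\Hom\!\big(M^{\tensor d}\tensor N^{\tensor d},\,(N\tensor M)^{\tensor d}\big)$; both sides send $(x_1\tensor\cdots\tensor x_d)\tensor(y_1\tensor\cdots\tensor y_d)$ to $(y_1\tensor x_1)\tensor\cdots\tensor(y_d\tensor x_d)$, and one may equally invoke coherence for symmetric monoidal categories.

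The argument is entirely formal, so I do not anticipate a real obstacle; the one point that deserves care is the bookkeeping of the various $\Si_d$-actions, and in particular the verification that $c_{M,N}$ intertwines the diagonal action on $M^{\tensor d}\tensor N^{\tensor d}$ with the permutation action on $(M\tensor N)^{\tensor d}$, since this is what legitimizes regarding $\psi^d$ as a transformation with target $\Gamma^d(-\tensor-)$ in the first place.
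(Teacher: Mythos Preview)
Your argument is correct. The paper does not actually prove this lemma; it is introduced with the phrase ``The following lemma is easy to check'' and no further justification is given. Your write-up supplies precisely the routine verification the paper omits: reduce to the tensor-power level via the defining square \eqref{commute}, use that $\Gamma^d$ is a subfunctor of $\tensor^d$ (so the inclusions are monic and the $\Gamma^d$-images of maps are restrictions of the corresponding $\tensor^d$-images), and then invoke naturality and coherence for the rearrangement isomorphism $c_{M,N}$ in the symmetric monoidal category $\P_\k$. There is nothing to compare against, and your treatment is the standard one.
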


\section{Generalized Schur Algebras} 
After recalling the definition of generalized Schur algebras \cite{EK1} 
associated to a $\k$-algebra $A$, 
we introduce corresponding 
standard homomorphisms between 
certain modules of divided powers.

\subsection{Associative $\k$-algebras}
Suppose that $R,S$ are associative algebras 
in the category $\M_\k$.  
Recall that the tensor product $R\tensor S$ 
is the algebra in $\M_\k$ with 
multiplication $m_{R\tensor S}$ defined by
\[ R\tensor S \tensor R \tensor S
\xrightarrow{\, 1\tensor \tw \tensor 1\, }
R\tensor R \tensor S \tensor S
\xrightarrow{\, m_R \tensor m_S\, }
R\tensor S. \]
Given $d\in \N$, the tensor power 
$R^{\tensor d}$ is an associative algebra in $\M_\k$ in a similar way. 
If $R$ is unital, then $R^{\tensor d}$ has unit $1_R^{\tensor d}$. 
\smallskip

In the remainder, the term {\em $\k$-algebra}
will always refer to a unital, associative algebra 
in the category $\P_\k$. 
Let $A\in \P_\k$ be a $\k$-algebra.   
Then $\Amod$ (resp.~$\modA$) denotes the subcategory of $\P_\k$
consisting of all left (right) $A$-modules, $M\in \P_\k$, 
and $A$-module homomorphisms. 
Write $\Hom_A(M,N)\in \P_\k$ to denote the 
set of all $A$-homomorphisms from $M$ to $N$  
for  $M,N \in \Amod$ (resp.~$\modA$).
We also write $\rho_M:A\tensor M \to A$ 
(resp.~$\rho_M:M\tensor A \to A$) to denote the 
induced linear map corresponding to a left
(right) $A$-module. 
 \smallskip

If $M\in \Amod$ (resp.~$\modA$) and $N\in \Bmod$ (resp.~$\modB$), 
the tensor product 
$M\tensor N$ is a left (resp.~right) $A\tensor B$-module, 
with corresponding module map: $\rho_{M\tensor N} = (\rho_M\tensor \rho_N)\circ(1\tensor \tau\tensor 1)$.

\subsection{The algebra $\Gamma^d A$}

Suppose $A$ is a $\k$-algebra.  Then $\Gamma^d A$ is a $\k$-algebra 
with multiplication $m_{\Gamma^dA}$ defined via the composition 
\[ \Gamma^d A\tensor \Gamma^d A
\xrightarrow{\psi^d} 
\Gamma^d(A\tensor A) 
\xrightarrow{\Gamma^d(m_A)}
\Gamma^d A, \] 
where the second map denotes  
the functorial action of $\Gamma^d$ on $m_A$. 
It follows that $\Gamma^d A$ is a unital subalgebra 
of $A^{\tensor d}$.

\begin{example}[The Schur algebra]

Suppose $n\in \N$, 
and let $\Mat_n(\k)$ 
denote the algebra 
of all $n\times n$-matrices in $\k$. 
Then  $\Gamma^d \Mat_n(\k)$ is isomorphic to 
the classical {\em Schur algebra}, 
$S(n,d)$, 
defined by Green \cite[Theorem 2.6c]{Green}. 
We view this isomorphism as an identification. 
\end{example}

We now have two distinct multiplications on  
the direct sum 
$\Gamma(A) = \bigoplus_{d \in \N} \Gamma^d A$. 
In order to distinguish them, we sometimes refer to the shuffle product 
\[\nabla: \Gamma^{d} A \tensor \Gamma^{e} A
\to \Gamma^{d+e} A: \, 
x\tensor y \mapsto x\ast y \] 
as {\em outer} multiplication in $\Gamma(A)$, 
while {\em inner} multiplication 
refers to the map 
defined as multiplication in $\G^d A$ 
on diagonal components 
\[ m_{\Gamma^d A}: 
\Gamma^{d} A \tensor \Gamma^{d} A\to \Gamma^d A:\, 
x\tensor y \mapsto x y \] 
and then extended by zero to 
other components.

\subsection{Generalized Schur algebras} \label{ss:Schur}

Given a  $\k$-algebra $A$, 
write $\Mat_n(A)$ for the algebra  
of $n\times n$-matrices in $A$. 
We   identify $\Mat_n(A)$ with  $\Mat_n(\k) \tensor A$ via 
\[\Mat_n(A)\, \xrightarrow{\,\sim\,}\,  \Mat_n(\k) \tensor A:\, 
(a_{ij})\mapsto \sum_{i,j}  
E_{ij} \tensor a_{ij},\] 
where $E_{ij}$ 
are elementary matrices in $\Mat_n(\k)$.  
Next, suppose $V$ is any left (resp.~right) $\Mat_n(\k)$-module, 
and let $M\in \Amod$ ($\modA$). 
Then write $V(M) := V\tensor M$ to denote the corresponding 
$\Mat_n(A)$-module. 

\begin{definition}
Suppose $A$ is an algebra, 
and let $n\in\N$, $d\in \N_0$. 
Then the {\em generalized Schur algebra} 
$S^A(n,d)$ is the algebra $\Gamma^d \Mat_n(A)$.
\end{definition}

Using the notation of \cite{EK1}, 
notice that $\Mat_n$ is spanned by the elements 
$\xi^a_{i,j} := E_{ij} \tensor a$,
for all  
$a\in A$ and $i,j \in \set{1,n}$.
Now suppose that $A$ is free as a $\k$-module with 
finite ordered basis $\{x_b\}_{b\in\B}$. 
Then $\Mat_n(A)$ has a corresponding basis 
\[\{\xi_{i,j,b} := \xi^{x_b}_{i,j} \mid i,j\in \set{1,n},\, b\in \B \}.\]
We view $\Mat_n(\k)$ as a subalgebra of $\Mat_n(A)$ 
by identifying $E_{ij} = \xi^{1}_{i,j}$. 
Notice that the classical Schur algebra $S(n,d)$ is thus a (unital) subalgebra of $S^A(n,d)$. 
\smallskip 

For each triple $(\i, \j, \b) \in \seq^d(n,n,\B)$, 
there is a corresponding element of $S^A(n,d)$ denoted  by
\[\xi_{\i,\j, \b}:= 
\sum_{(\i,\j, \b) \sim ({\tt r},{\tt s}, \c)}
\xi_{r_1,s_1, c_1}
\tensor \cdots \tensor 
\xi_{r_d,s_d, c_d},\]
where the sum is over all triples $ ({\tt r},{\tt s}, \c) $ in the same 
diagonal $\Si_d$-orbit as $(\i,\j, \b)$. 
It thus follows from \eqref{mu2}, \eqref{basis} and \eqref{basis2} 
that the set  
\[ \{\xi_{\i,\j,\b} \mid
(\i,\j,\b) \in \seq^d(n,n,\B)/\Si_d\}\]
forms a basis of of $S^A(n,d)$. 
In a similar way, the subalgebra $S(n,d)$ 
has a basis given by
\[\{\xi_{\i, \j} := 
\sum_{(\i,\j) \sim (\r,\s)} 
\xi^{1}_{r_1,s_1}
\tensor \cdots \tensor 
\xi^{1}_{r_d,s_d}\mid
(\i, \j) \in \seq^d(n,n)/\Si_d\}. \]
For each weight $\mu \in \L_d(n)$, we write 
\[\xi_\mu := \xi_{\i_\mu, \i_\mu}\]
to denote the corresponding idempotent 
in $S(n,d) \subset S^A(n,d)$.

\subsection{Standard homomorphisms}
Let us fix an algebra $A$ throughout the remainder of the section. 
Given $M\in \Amod$, 
it follows from (\ref{commute}) that $\Gamma^d M$ is 
a left $\Gamma^d A$-module 
with module map $\rho_{\Gamma^d M}$ 
determined by the composition 
\[ \Gamma^d A\tensor \Gamma^d M 
\xrightarrow{\psi^d} 
\Gamma^d(A\tensor M) 
\xrightarrow{\Gamma^d(\rho_M)}
\Gamma^d(M), \]
where the second map 
denotes the functorial action of $\Gamma^d$ 
on $\rho_M$.

\begin{lemma}\label{lem:hom}
Suppose $M,N\in \Amod$, and let $\varphi: M\to N$ 
be an $A$-module homomorphism.  Then the functorial 
map 
\[\Gamma^d(\varphi): \Gamma^d M \to \Gamma^d N\]
is a homomorphism of $\Gamma^d A$-modules. 
Moreover, if $\varphi$ is injective (resp.~surjective) 
then so is $\Gamma^d(\varphi)$. 
\end{lemma}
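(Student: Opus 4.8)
The plan is to verify each assertion by working inside the ambient tensor power $M^{\tensor d}$, where everything is transparent, and then restricting to the divided power subfunctor.

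\textbf{Step 1: $\Gamma^d(\varphi)$ is a $\Gamma^d A$-module map.} This is a naturality statement, so I would phrase it as a commuting-square diagram chase. The module structure on $\Gamma^d M$ is, by definition, the composite $\Gamma^d(\rho_M)\circ \psi^d(A,M)$, and similarly for $N$. I want to show the square
\begin{equation*}
\begin{tikzcd}[column sep=large]
\Gamma^d A\tensor \Gamma^d M \ar[r,"1\tensor \Gamma^d(\varphi)"] \ar[d,"\psi^d"'] & \Gamma^d A\tensor \Gamma^d N \ar[d,"\psi^d"]\\
\Gamma^d(A\tensor M) \ar[r,"\Gamma^d(1\tensor\varphi)"] \ar[d,"\Gamma^d(\rho_M)"'] & \Gamma^d(A\tensor N) \ar[d,"\Gamma^d(\rho_N)"]\\
\Gamma^d M \ar[r,"\Gamma^d(\varphi)"] & \Gamma^d N
\end{tikzcd}
\end{equation*}
commutes. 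The top square commutes because $\psi^d$ is a natural transformation of bifunctors $\Gamma^d\boxtimes\Gamma^d \to \Gamma^d(-\tensor-)$ by Lemma \ref{psi}(1), applied to the pair of morphisms $(\mathrm{id}_A,\varphi)$. The bottom square commutes because it is $\Gamma^d$ applied to the square $\rho_N\circ(1\tensor\varphi) = \varphi\circ\rho_M$, which is precisely the statement that $\varphi$ is an $A$-module homomorphism, and $\Gamma^d$ is a functor. Chaining the two squares gives the claim.

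\textbf{Step 2: exactness properties.} Here I would use that $\Gamma^d$ is a subfunctor of $\tensor^d$: for any $\varphi\colon M\to N$ the map $\Gamma^d(\varphi)$ is the restriction of $\varphi^{\tensor d}\colon M^{\tensor d}\to N^{\tensor d}$ to the invariant submodule $\Gamma^d M = (M^{\tensor d})^{\Si_d}$, landing in $\Gamma^d N$. If $\varphi$ is injective, then $\varphi^{\tensor d}$ is injective — since $M,N\in\P_\k$ are projective (hence flat), tensoring the inclusion $M\hookrightarrow N$ with the flat modules $M,\dots,N$ in stages keeps it injective — and a restriction of an injective map is injective, so $\Gamma^d(\varphi)$ is injective. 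If $\varphi$ is surjective, I would argue that $\Gamma^d(\varphi)$ is surjective by exhibiting preimages on a basis in the style of \eqref{basis2}: pick a splitting $s\colon N\to M$ of $\varphi$ as $\k$-modules (available since $N$ is projective), and for $\b\in\seq^d(\B_N)$ check that $\varphi^{\tensor d}$ carries the symmetrized tensor $x_\b\in\Gamma^d M$ built from $s(x_{b_i})$ to the corresponding basis element of $\Gamma^d N$; since these span $\Gamma^d N$, surjectivity follows. Alternatively, and more cleanly, one notes $\varphi^{\tensor d}$ is split surjective via $s^{\tensor d}$, and split surjections of $\k[\Si_d]$-modules remain surjective on $\Si_d$-invariants — but this last step can fail over a general commutative ring when $d!$ is not invertible, so I would favor the explicit-basis argument, or simply invoke the decomposition \eqref{eq:expon2} together with $M\cong N\oplus\ker$ (valid since $N$ projective) to reduce to the case $\varphi$ a coordinate projection, where surjectivity of $\Gamma^d(\varphi)$ is immediate from \eqref{eq:expon2}.

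\textbf{Expected main obstacle.} The genuinely delicate point is surjectivity over a non-semisimple base: one cannot simply pass $\varphi^{\tensor d}$ to $\Si_d$-coinvariants and dualize, nor naively average over $\Si_d$. The safe route is the reduction to a split projection via $M\cong \ker\varphi\oplus N$ and \eqref{eq:expon2}, which sidesteps any division by $d!$; I would make sure to spell that reduction out, since it is the only place where the projectivity hypothesis $M,N\in\P_\k$ is essential.
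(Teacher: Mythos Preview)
Your argument is correct and follows the same line as the paper's (two-sentence) proof: pass to $\varphi^{\tensor d}$, which is an $A^{\tensor d}$-module homomorphism inheriting injectivity/surjectivity from $\varphi$, and then restrict to $\Si_d$-invariants. You are in fact more careful than the paper about why surjectivity survives the passage to invariants.

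One correction, however: your dismissal of the split-surjection route is mistaken. The $\k$-linear section $s\colon N\to M$ gives $s^{\tensor d}\colon N^{\tensor d}\to M^{\tensor d}$, and this map \emph{is} $\Si_d$-equivariant, since it is applied factorwise and therefore commutes with any permutation of the tensor factors. Thus $\varphi^{\tensor d}$ is split as a map of $\k[\Si_d]$-modules by an equivariant section, and an equivariantly split surjection remains surjective on $G$-invariants for any group $G$, with no need to divide by $d!$. Your fallback via $M\cong \ker\varphi\oplus N$ and \eqref{eq:expon2} is also valid, but the obstacle you flag is not real.
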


\begin{proof}
The map 
$\varphi^{\tensor d}: M^{\tensor d} \to N^{\tensor d}$ 
is a homomorphism of $A^{\tensor d}$-modules, 
and if $\varphi$ is injective (resp.~surjective) 
then so is $\varphi^{\tensor d}$. 
The statements for $\Gamma^d(\varphi)$ follow by 
restriction. 
\end{proof}

Suppose $d, e\in \N_0$ and $M, N \in  \Amod$. 
Notice that the homogeneous component
of comultiplication 
\begin{equation}\label{comult}
\Delta: \Gamma^{d+e} A 
\rightarrow \Gamma^{d} A \tensor \Gamma^{e} A
\end{equation}
is an injective (unital) map of $\k$-algebras. 
It follows that  
$\Gamma^{d} M \tensor \Gamma^{e} N$ 
has a corresponding $\Gamma^d A$-module structure, defined 
by restriction 
along \eqref{comult}. 
In the particular case $M=N$, we note that each of the following maps 
is a $\Gamma^d A$-module homomorphism:  
\begin{align}\label{eq:homom}
\Delta: \Gamma^{d+e} M
\to \Gamma^{d} M \tensor \Gamma^{e} M,
\qquad
& \qquad
\nabla:\Gamma^{d} M \tensor \Gamma^{e} M
\to \Gamma^{d+e} M, \nonumber
\\[.1cm]
\tw: \Gamma^{d} M \tensor \Gamma^{e} M\, 
&\xrightarrow{\sim}\, \Gamma^{e} M \tensor \Gamma^{d} M, 
\end{align}
where $\nabla$ (resp.~$\Delta$) are components of 
(co)multiplication in the bialgebra $\Gamma(M)$. 
Setting $A=\k$ then gives the following. 
\smallskip

\begin{lemma}\label{lem:natural}
Let $d, e\in \N$.  
Then there are natural transformations 
\begin{align*}
\Delta: \Gamma^{d+e} \to \Gamma^{d}  \tensor \Gamma^{e},
\qquad
& \qquad
\nabla:\Gamma^{d}  \tensor \Gamma^{e} 
\to \Gamma^{d+e} 
\end{align*}
of functors $\P_\k\to \P_\k$ 
induced by setting $\Delta(M)$ (resp.~$\nabla(M)$) 
equal to (co)multiplication in $\Gamma(M)$, 
for each $M\in \P_\k$.
\end{lemma}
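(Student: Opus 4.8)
The plan is to realize both $\Delta$ and $\nabla$ as restrictions of transparently natural maps built out of the tensor-power functors, and then to invoke the fact (Section \ref{ss:functor}) that $\Gamma^d$ is a subfunctor of $\tensor^d$. Write $\beta_M\colon M^{\tensor(d+e)}\isoto M^{\tensor d}\tensor M^{\tensor e}$ for the canonical regrouping isomorphism; since $\varphi^{\tensor(d+e)}=\varphi^{\tensor d}\tensor\varphi^{\tensor e}$ under this identification for every $\varphi\colon M\to N$ in $\P_\k$, the maps $\beta_M$ assemble into a natural isomorphism $\beta\colon\tensor^{d+e}\isoto\tensor^d\tensor\tensor^e$. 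Fixing once and for all a set $R$ of coset representatives for $\Si_{d+e}^{d,e}=\Si_{d+e}/(\Si_d\times\Si_e)$, the place-permutation endomorphisms $\Sigma_M:=\sum_{\sigma\in R}(-)\sigma$ of $M^{\tensor(d+e)}$ likewise form a natural transformation $\Sigma\colon\tensor^{d+e}\to\tensor^{d+e}$, because $\varphi^{\tensor(d+e)}$ is $\Si_{d+e}$-equivariant.

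First I would record that, since $\Gamma^dM$ and $\Gamma^eM$ lie in $\P_\k$ and are therefore flat, the chain of identifications $(M^{\tensor(d+e)})^{\Si_d\times\Si_e}\cong(M^{\tensor d})^{\Si_d}\tensor(M^{\tensor e})^{\Si_e}=\Gamma^dM\tensor\Gamma^eM$ — obtained by taking $\Si_d$- and then $\Si_e$-invariants one tensor factor at a time and using flatness to pass invariants (which are kernels of maps of flat modules) past $\tensor$ — realizes $\Gamma^dM\tensor\Gamma^eM$, via $\beta_M$, as the submodule $(M^{\tensor(d+e)})^{\Si_d\times\Si_e}$ of $M^{\tensor(d+e)}$; this is exactly the identification already used around \eqref{comult}. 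Granting this, $\Delta(M)$ is, by its very definition, the inclusion $(M^{\tensor(d+e)})^{\Si_{d+e}}\hookrightarrow(M^{\tensor(d+e)})^{\Si_d\times\Si_e}$ followed by this identification, i.e. the map obtained from $\Gamma^{d+e}M\hookrightarrow M^{\tensor(d+e)}\xrightarrow{\beta_M}M^{\tensor d}\tensor M^{\tensor e}$ by co-restricting the target to $\Gamma^dM\tensor\Gamma^eM$. Dually, $\nabla(M)$ is obtained from $\Gamma^dM\tensor\Gamma^eM\hookrightarrow M^{\tensor d}\tensor M^{\tensor e}\xrightarrow{\beta_M^{-1}}M^{\tensor(d+e)}\xrightarrow{\Sigma_M}M^{\tensor(d+e)}$ by co-restricting the target to $\Gamma^{d+e}M=(M^{\tensor(d+e)})^{\Si_{d+e}}$, the latter co-restriction being legitimate by the standard fact that the shuffle sum of a $\Si_d\times\Si_e$-invariant tensor is $\Si_{d+e}$-invariant (and independent of the choice of $R$).

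With these two descriptions in hand, naturality is a diagram chase. Given $\varphi\colon M\to N$, consider the outer square whose horizontal arrows are $\beta_M,\beta_N$ (for $\Delta$), resp. $\Sigma_M\circ\beta_M^{-1}$ and $\Sigma_N\circ\beta_N^{-1}$ (for $\nabla$), and whose vertical arrows are $\varphi^{\tensor(d+e)}$ and $\varphi^{\tensor d}\tensor\varphi^{\tensor e}$: it commutes by naturality of $\beta$, resp. of $\beta$ and $\Sigma$. Restricting the relevant vertical arrow to $\Gamma^{d+e}M$ yields $\Gamma^{d+e}(\varphi)$, and restricting the other to $\Gamma^dM\tensor\Gamma^eM$ yields $\Gamma^d(\varphi)\tensor\Gamma^e(\varphi)$ (both by the definition of the action of the $\Gamma^\bullet$ functors on morphisms, recalled in Section \ref{ss:functor}); since the inclusions involved are monomorphisms, the restricted and co-restricted square commutes as well, and that is precisely the assertion that $\Delta$ and $\nabla$ are natural. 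Specializing \eqref{eq:homom} to $A=\k$ finally identifies $\Delta(M)$ and $\nabla(M)$ with (co)multiplication in the bialgebra $\Gamma(M)$, as claimed.

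The only ingredient here that is not purely formal is the identification of $\Gamma^dM\tensor\Gamma^eM$ with $(M^{\tensor(d+e)})^{\Si_d\times\Si_e}$ as a submodule of $M^{\tensor(d+e)}$ over an arbitrary commutative ring $\k$; I expect this flatness point — together with the remark that a shuffle sum of $\Si_d\times\Si_e$-invariants lands in $\Gamma^{d+e}M$ — to be the only step demanding any care, everything else being bookkeeping about subfunctors of $\tensor^{d+e}$. Since this identification is already built into the discussion of \eqref{comult}, in the write-up it will suffice to refer back to it.
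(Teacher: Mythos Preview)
Your argument is correct: you realize $\Delta$ and $\nabla$ as (co)restrictions of the manifestly natural regrouping isomorphism $\beta$ and shuffle endomorphism $\Sigma$ on tensor powers, and then use that $\Gamma^d$ is a subfunctor of $\tensor^d$ to pass the naturality squares down. The flatness step identifying $\Gamma^dM\tensor\Gamma^eM$ with $(M^{\tensor(d+e)})^{\Si_d\times\Si_e}$ is handled carefully and is indeed the only nontrivial point.

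The paper itself does not give a proof of this lemma; it is stated immediately after the sentence ``Setting $A=\k$ then gives the following,'' with the implicit suggestion that the same reasoning behind \eqref{eq:homom} yields naturality. Your write-up unpacks exactly this: the maps $\Delta,\nabla,\tw$ on tensor powers commute with every $\varphi^{\tensor(d+e)}$ (not just with the $A^{\tensor d}$-action for a fixed $A$), so restricting to invariants gives both the $\Gamma^dA$-linearity of \eqref{eq:homom} and the naturality asserted here. In that sense your argument \emph{is} the paper's intended argument, spelled out in full.
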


Now suppose $r\in \N$ 
and  $\mu \in \L(r)$. 
Given $M, N_1, \dots, N_r \in \P_\k$, we write 
\[\Gamma^{(\mu)}(N_1, \dots, N_r) :=\,  
\Gamma^{\mu_1}N_1 \tensor \cdots \tensor \Gamma^{\mu_r} N_r\] 
and set 
\[ \Gamma^\mu M := 
\Gamma^{(\mu)}(M, \dots, M). \] 
If $M_1, \dots, M_r \in \Amod$, then
we consider 
$\Gamma^{(\mu)}(M_1, \dots, M_r)$
as a left 
$\Gamma^d A$-module by restriction along the corresponding  
inclusion, 
$\Delta: \Gamma^d A \to \Gamma^\mu A$, 
of $\k$-algebras. 
\smallskip

Suppose that $\gamma= (\gamma_{ij})\in \L_d(\N\times \N)$ 
is a (semi-infinite) matrix whose entries sum to $d$. 
Then let 
 $\lambda, \mu \in \L_d(\N)$  be weights such that 
$\lambda_i= \sum_j \gamma_{ij}$ and 
$\mu_j= \sum_i \gamma_{ij}$ for all $i,j\in\N$. 
Slightly abusing notation, for a given $N\in \P_\k$, 
we also write $\gamma = \gamma(N)$ to denote the corresponding 
{\em standard homomorphism:} 
\[\gamma: \Gamma^\mu N \rightarrow \Gamma^{\lambda}N\]
defined by the composition
\begin{align*}
\bigotimes_j\Gamma^{\mu_j} N
\xrightarrow{ \Delta \tensor \dots \tensor \Delta}
\bigotimes_i
\bigotimes_j \Gamma^{\gamma_{ij}} N 
\isoto
\bigotimes_j 
\bigotimes_i \Gamma^{\gamma_{ij}}N 
\xrightarrow{\nabla\tensor \dots \tensor \nabla}
\bigotimes_i \Gamma^{\lambda_i}N, 
\end{align*}
where each $\nabla$ (resp.~$\Delta$) denotes an 
appropriate component of 
(co)multiplication in the bialgebra $\Gamma(N)$, 
and where the second map rearranges the tensor factors. 
\smallskip

If $M\in  \Amod$, then it follows from 
(\ref{eq:homom}) that 
$\gamma(M): \Gamma^\mu M \to \Gamma^\lambda M$ 
is a  homomorphism of $\Gamma^d A$-modules. 
In the same way, we obtain  
homomorphisms of $S^A(n,d)$-modules 
corresponding to any given  $M \in \Mat_n(A) \lmod$. 
\smallskip

\subsection{Quotient modules} 
\label{ss:quotient}

Suppose $M\in \P_\k$.
Then we write $\< L\> \subset M^{\tensor d}$ 
to denote the  $\Si_d$-submodule generated by  
a subset $L \subset M^{\tensor d}$. 
For example if
 $L_1, \dots, L_d \subset M$ are 
 $\k$-submodules
and $L= L_1 \tensor \cdots \tensor L_d$, then 
\[\< L \> = \sum_{\sigma \in \Si_d} 
L_{1\sigma} \tensor \cdots \tensor L_{d\sigma}, \]
where $i\sigma := \sigma^{-1}(i)$ denotes the right action of 
$\sigma$ on $i\in \set{1,d}$. 
\smallskip

Now suppose   
$M=N\oplus N'$ for some $\k$-submodules $N,N' \subset N$.   
Then notice that there is a corresponding decomposition 
\[ M^{\tensor d}\, 
=\, (N')^{\tensor d} \oplus
\<N\tensor M^{\tensor d-1}\>, \]
which is a direct sum of $\Si_d$-submodules. 
Taking $\Si_d$-invariants on both sides results in 
the  decomposition 
\begin{equation}\label{invariant}
\Gamma^d M 
= \Gamma^d(N') \oplus 
\<N \tensor M^{\tensor d-1}\>^{\Si_d}
\end{equation}
into $\k$-submodules. 
The decomposition \eqref{invariant} then makes 
it possible to describe the kernel of the quotient map 
\[\Gamma^d(\pi): 
\Gamma^d M \onto \Gamma^d(M/N)\] 
induced by projection $\pi:M\onto M/N$.  
More generally, we note the following. 

\begin{lemma}\label{lem:quotient}
Let $A$ be a $\k$-algebra.  Suppose  
$N\subset M$ is an inclusion of $A$-modules  such that 
$M=N\oplus N'$ for some $\k$-submodule $N'\subset M$.
Then there is an exact sequence 
\[0\, \to\,
\< N \tensor M^{\tensor d-1} \>^{\Si_d}\, \longrightarrow \,
\Gamma^d M\, \xrightarrow{\Gamma^d(\pi)}\, 
\Gamma^d (M/N)\, \to\, 0\]
of $\Gamma^d A$-module homomorphisms.
\end{lemma}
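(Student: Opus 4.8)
The plan is to realize $\Gamma^d(\pi)$ as the restriction to $\Si_d$-invariants of the tensor power map $\pi^{\tensor d}\colon M^{\tensor d}\onto (M/N)^{\tensor d}$, and then to read off both surjectivity and the kernel from the direct sum decomposition afforded by the splitting $M=N\oplus N'$ together with \eqref{invariant}.

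First I would observe that $\pi\colon M\onto M/N$ is a \emph{surjective} homomorphism of $A$-modules, so by Lemma~\ref{lem:hom} the functorial map $\Gamma^d(\pi)\colon \Gamma^d M\to \Gamma^d(M/N)$ is a surjective homomorphism of $\Gamma^d A$-modules. In particular its kernel is automatically a $\Gamma^d A$-submodule of $\Gamma^d M$, so the content of the lemma reduces to identifying that kernel with $\<N\tensor M^{\tensor d-1}\>^{\Si_d}$.

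For the identification I would expand $M^{\tensor d}=(N\oplus N')^{\tensor d}$ into the direct sum of the $2^d$ summands indexed by the set of tensor positions occupied by $N$. The map $\pi^{\tensor d}$ annihilates every summand in which $N$ occurs in at least one position, and the sum of exactly these summands is the $\Si_d$-submodule $\<N\tensor M^{\tensor d-1}\>$; on the remaining summand $(N')^{\tensor d}$ it agrees with $(\pi|_{N'})^{\tensor d}$, which is an isomorphism onto $(M/N)^{\tensor d}$ since $\pi$ restricts to an isomorphism $N'\isoto M/N$. Passing to $\Si_d$-invariants, and using \eqref{invariant} in the form $\Gamma^d M=\Gamma^d(N')\oplus \<N\tensor M^{\tensor d-1}\>^{\Si_d}$, one sees that $\Gamma^d(\pi)$ restricts to an isomorphism $\Gamma^d(N')\isoto \Gamma^d(M/N)$ (this last step also via Lemma~\ref{lem:hom}) and kills the complementary summand. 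Hence $\ker\Gamma^d(\pi)=\<N\tensor M^{\tensor d-1}\>^{\Si_d}$, which is precisely the claimed exact sequence.

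I do not anticipate a genuine obstacle here; the only point needing a little care is the interchange of ``kernel'' with ``$\Si_d$-invariants''. Since $\Gamma^d(\pi)$ is by definition the restriction of $\pi^{\tensor d}$ to $(M^{\tensor d})^{\Si_d}=\Gamma^d M$, its kernel equals $(M^{\tensor d})^{\Si_d}\cap\ker(\pi^{\tensor d})$, and because $\ker(\pi^{\tensor d})=\<N\tensor M^{\tensor d-1}\>$ is a $\Si_d$-submodule of $M^{\tensor d}$, this intersection is exactly $\<N\tensor M^{\tensor d-1}\>^{\Si_d}$ --- in agreement with what \eqref{invariant} predicts. Note also that the splitting hypothesis $M=N\oplus N'$ is used precisely to guarantee that $\ker(\pi^{\tensor d})$ is the ``obvious'' submodule $\<N\tensor M^{\tensor d-1}\>$ and that $\pi$ admits an isomorphic restriction to a complement of $N$.
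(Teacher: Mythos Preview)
Your proposal is correct and follows essentially the same approach as the paper: both identify the kernel of $\pi^{\tensor d}$ on $M^{\tensor d}$ using the splitting $M=N\oplus N'$, and then pass to $\Si_d$-invariants via \eqref{invariant}. The paper compresses all of this into a single sentence (``obtained by restriction from the exact sequence $0\to \<N\tensor M^{\tensor d-1}\>\to M^{\tensor d}\xrightarrow{\pi^{\tensor d}} (M/N)^{\tensor d}\to 0$ of $A^{\tensor d}$-module homomorphisms''), whereas you spell out the $2^d$-summand decomposition and the kernel/invariants interchange explicitly.
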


\begin{proof}
It follows from (\ref{invariant}) that the required exact sequence 
of $\Gamma^d A$-modules is obtained 
by restriction 
from the exact sequence 
\[ 0\, \to\,  \< N\tensor M^{\tensor d-1} \> 
\, \longrightarrow \,  M^{\tensor d}\,
 \xrightarrow{\pi^{\tensor d}}\, 
(M/N)^{\tensor d}\, \to\, 0 \] 
of $A^{\tensor d}$-module homomorphisms. 
\end{proof}

We introduce some additional notation. 
Suppose $N_1, \dots, N_r\subset M$ 
is a finite collection 
of $\k$-submodules 
of some $M\in \P_\k$, 
and let $\mu \in \L_r(d)$.  
Then we write  
\[ N_{\tensor \mu} := N_1^{\tensor \mu_1}\tensor \dots \tensor N_r^{\tensor \mu_r}\] 
to denote the corresponding $\k$-submodule of $M^{\tensor d}$ and use the notation 
\begin{equation}\label{invariant2}
N_\mu := \<N_{\tensor \mu}\>^{\Si_d}  \subset \G^d M
\end{equation} 
for the $\k$-submodule of $\Si_d$-invariants. 
\smallskip

\section{Wreath Products and Generalized Schur-Weyl Duality}

Let us briefly recall 
the generalized Schur-Weyl duality \cite{EK1} 
which establishes a relationship between  
a wreath product algebra $A\wr \Si_d$ 
and a corresponding $A$-Schur algebra 
via their respective 
actions on a common tensor space.

\subsection{Wreath products}
Fix a $\k$-algebra $A$. 
The {\em wreath product algebra} $A\wr \Si_d$ 
is the $\k$-module $A^{\tensor d} \tensor \k \Si_d$, 
with multiplication 
defined by 
\begin{equation}\label{wr}
(x \tensor \rho ) \cdot (y \tensor \sigma)
:= x (y\rho^{-1}) \tensor \rho \sigma
\end{equation}
for all $x,y \in A^{\tensor d}$ and $\rho, \sigma \in \Si_d$. 
If $G$ is a finite group, then note for example that 
$(\k G) \wr \Si_d$ is isomorphic to 
the group algebra of the classical wreath product, 
$G\wr \Si_d := G^{d} \rtimes \Si_d$. 
\smallskip

Assume for the rest of the section that $A$ is free  
as a $\k$-module. 
We then identify the tensor power $A^{\tensor d}$ 
and group algebra  $\k\Si_d$  
as subalgebras of $A\wr \Si_d$ by setting 
\[A^{\tensor d}=A^{\tensor d} \tensor 1_{\Si_d}, \quad 
\k\Si_d=1_{A^{\tensor d}} \tensor \k \Si_d\] 
respectively.

\subsection{Generalized Schur-Weyl duality}

Suppose $n, d\in \N$. 
Write $\Vn := \k^n$ 
to denote the standard left $\Mat_n(\k)$-module, 
with basis elements 
\[ v_i:=  (0, \dots,  1, \dots, 0)\] 
for $i\in \set{1,n}$, considered as column vectors. 
Then for simplicity, let us write 
\[{\VV} :=  \Vn(A) = 
\k^n \tensor A\]
to denote the corresponding left $\Mat_n(A)$-module.

\smallskip

We may identify $\VV$ and $A^n$ as right $A$-modules, 
and it follows that the {\em tensor space}, 
$\VV^{\tensor d}$, 
is naturally a right 
$A^{\tensor d}$-module. 
A right action of 
$A\wr \Si_d$ on $\VV^{\tensor d}$
is then defined by setting 
\begin{equation}\label{space2}
w(x\cdot\sigma) := (wx)\sigma, \quad \text{for}\, \   
w\in \VV^{\tensor d}, \, 
x\in A^{\tensor d}\text{, and}\  
\sigma \in \Si_d.
\end{equation} 
More explicitly, suppose  $w=w_1\otimes\dots \otimes w_d$ 
and $x=x_1\otimes \dots\otimes x_d$, 
for some $w_i\in \VV$ and  $x_i\in A$. 
Then notice that 
\begin{equation*}
(wx)\sigma 
= (w_{1\sigma}x_{1\sigma}) \otimes \dots \otimes (w_{d\sigma}x_{d\sigma})
=(w\sigma)(x\sigma)
\end{equation*}
for any $\sigma \in \Si_d$. 
Hence, by \eqref{wr} we have 
\[w(\sigma \cdot x) = w((x\sigma^{-1})\cdot \sigma) 
= ( w(x\sigma^{-1}))\sigma
= (w\sigma)x.\]
It follows that \eqref{space2} is well-defined.

\begin{lemma}[{\cite[Lemma 5.7]{EK1}}]
The embedding 
$S^A(n,d) \hookrightarrow 
\Mat_n(A)^{\tensor d} \cong 
\End_{A^{\tensor d}}(\VV^{\tensor d})$
defines an algebra isomorphism 
\[S^A(n,d) \cong 
\End_{A\wr\Si_d}(\VV^{\tensor d})\]
for all $n,d \in \N$. 
\end{lemma}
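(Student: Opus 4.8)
The plan is to establish the isomorphism $S^A(n,d) \cong \End_{A\wr\Si_d}(\VV^{\tensor d})$ by combining the already-cited embedding with a double-centralizer-type argument. First I would record the chain
\[
S^A(n,d) = \Gamma^d\Mat_n(A) \hookrightarrow \Mat_n(A)^{\tensor d} \cong \End_{A^{\tensor d}}(\VV^{\tensor d}),
\]
where the last identification comes from the standard fact that $\VV = \Vn(A) \cong A^n$ is free of rank $n$ as a right $A$-module, so that $\VV^{\tensor d}$ is free of rank $n^d$ over $A^{\tensor d}$ and its endomorphism ring is $\Mat_{n^d}(A^{\tensor d}) \cong \Mat_n(A)^{\tensor d}$. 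Under this identification, $\Gamma^d\Mat_n(A) = (\Mat_n(A)^{\tensor d})^{\Si_d}$ corresponds precisely to the $\Si_d$-invariant endomorphisms, i.e. those $\varphi \in \End_{A^{\tensor d}}(\VV^{\tensor d})$ commuting with the place-permutation action of $\Si_d$. Here I would use Lemma~\ref{psi} together with the compatibility of $\tw$ (the symmetry isomorphism) with $\psi^d$, exactly as in Lemma~\ref{psi}(2), to check that conjugation by a permutation $\sigma$ sends $x \in \Mat_n(A)^{\tensor d}$ to $x\sigma$ in the sense of the $\Si_d$-action on tensor powers, so that $\End_{A^{\tensor d}}(\VV^{\tensor d})^{\Si_d} = \Gamma^d\Mat_n(A)$.

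Next I would identify this invariant subalgebra with $\End_{A\wr\Si_d}(\VV^{\tensor d})$. An endomorphism of $\VV^{\tensor d}$ that is $A^{\tensor d}$-linear and $\Si_d$-equivariant is exactly an endomorphism for the combined action of $A^{\tensor d}$ and $\k\Si_d$; since the action of $A\wr\Si_d$ on $\VV^{\tensor d}$ defined in \eqref{space2} is generated by these two subalgebras (recall $A^{\tensor d} = A^{\tensor d}\tensor 1_{\Si_d}$ and $\k\Si_d = 1_{A^{\tensor d}}\tensor\k\Si_d$ inside $A\wr\Si_d$), a $\k$-linear map on $\VV^{\tensor d}$ commutes with all of $A\wr\Si_d$ if and only if it commutes with $A^{\tensor d}$ and with $\Si_d$ separately. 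This gives
\[
\End_{A\wr\Si_d}(\VV^{\tensor d}) = \End_{A^{\tensor d}}(\VV^{\tensor d})^{\Si_d} = \Gamma^d\Mat_n(A) = S^A(n,d),
\]
and one checks this equality is realized by the embedding in the statement, so it is an algebra isomorphism.

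The main obstacle I anticipate is the bookkeeping in the middle step: verifying carefully that the place-permutation $\Si_d$-action on $\End_{A^{\tensor d}}(\VV^{\tensor d}) \cong \Mat_n(A)^{\tensor d}$ matches the $\Si_d$-action on the tensor power $\Mat_n(A)^{\tensor d}$ whose invariants define $\Gamma^d\Mat_n(A)$. Concretely, one must track how a permutation $\sigma$ acting on $\VV^{\tensor d}$ conjugates an endomorphism $\varphi = \varphi_1\tensor\cdots\tensor\varphi_d$ built from matrices $\varphi_i \in \Mat_n(A)$, and confirm that $\sigma^{-1}\varphi\sigma = \varphi_{1\sigma}\tensor\cdots\tensor\varphi_{d\sigma}$; this uses the naturality of $\psi^d$ with respect to $\tw$ from Lemma~\ref{psi}(2) and the explicit form of the $A\wr\Si_d$-action computed just before the lemma, namely $(w\sigma)(x\sigma) = (wx)\sigma$. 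Once this identification of actions is pinned down, the rest is formal. Since the statement is cited as \cite[Lemma 5.7]{EK1}, a short proof along these lines — or simply a reference to \cite{EK1} with the identification $\VV \cong A^n$ made explicit — suffices.
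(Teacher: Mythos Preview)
Your proposal is correct. Note, however, that the paper does not supply its own proof of this lemma: it is stated with attribution to \cite[Lemma 5.7]{EK1} and left unproved, so there is no argument in the paper to compare against. Your sketch---identifying $\End_{A^{\tensor d}}(\VV^{\tensor d})$ with $\Mat_n(A)^{\tensor d}$ via freeness of $\VV \cong A^n$ over $A$, then matching the conjugation $\Si_d$-action on endomorphisms with the place-permutation action on $\Mat_n(A)^{\tensor d}$ so that the invariants are exactly $\Gamma^d\Mat_n(A)$, and finally using that $A\wr\Si_d$ is generated by $A^{\tensor d}$ and $\k\Si_d$---is the standard argument and is essentially what one finds in \cite{EK1}. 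You have also correctly anticipated that a bare citation would suffice here, which is precisely what the paper does.
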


Given $n\geq d$, 
let $\omega \in \Lambda_d(n)$ denote the weight 
$\omega = (1^d)= (1, \dots, 1, 0, \dots, 0)$.  
Then considering 
$\VV$ again as a left $\Mat_n(A)$-module, 
notice that  $\VV^{\tensor d}$ 
is equal to the 
left $S^A(n,d)$-module $\Gamma^{\omega} \VV$. 
\smallskip

For each weight $\mu\in \Lambda_d(n)$, 
define a corresponding element 
\[v_{\tensor \mu} := v_1^{\tensor \mu_1} \tensor \dots \tensor v_n^{\tensor \mu_n}\]
in the tensor space $\VV^{\tensor d}$.
\smallskip

The next result summarizes  (5.15) and (5.17) of \cite{EK1}. 

\begin{proposition}[{\cite{EK1}}]\label{prop:EK}
Assume that $n\geq d$. 
\begin{itemize}

\item[(i)]

There is a unique 
$(S^A(n,d), A\wr \Si_d)$-bimodule
isomorphism 
$S^A(n,d)\xi_\omega  \xrightarrow{\sim} 
\VV^{\tensor d}$ 
which maps $\xi_\omega \mapsto v_{\tensor \omega}$.

\item[(ii)]
There is an algebra isomorphism,
$A\wr \Si_d \xrightarrow{\sim}
\xi_\omega S^A(n,d) \xi_\omega$, 
given by: 
\[(x_1\tensor \dots \tensor x_d) \tensor \sigma 
\mapsto 
\xi_{1,1\sigma}^{x_1} \ast \dots \ast \xi_{d,d\sigma}^{x_d}.\]

\item[(iii)]
$\End_{S^A(n,d)}(\VV^{\tensor d})\, \cong\, A\wr \Si_d$. 
\end{itemize}
\end{proposition}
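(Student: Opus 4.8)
The plan is to establish (ii) first, deduce (i) from it, and obtain (iii) as a formal consequence.

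For (ii), write $\Phi\colon A\wr\Si_d\to\xi_\omega S^A(n,d)\xi_\omega$ for the asserted map. I would first check that it is well defined: since $\sigma$ permutes $\set{1,d}$, each factor $\xi^{x_i}_{i,i\sigma}=E_{i,i\sigma}\tensor x_i$ has both matrix indices in $\set{1,d}$, so $\xi^{x_1}_{1,1\sigma}\ast\cdots\ast\xi^{x_d}_{d,d\sigma}$ is fixed on both sides by the idempotent $\xi_\omega$. Expanding this shuffle product from the definition of $\ast$ and comparing with the basis $\{\xi_{\i,\j,\b}\}$ of $S^A(n,d)$ recalled in Section~\ref{ss:Schur}, one identifies $\Phi((x_{b_1}\tensor\cdots\tensor x_{b_d})\tensor\sigma)$ with the basis element $\xi_{\i_\omega,\j,\b}$ determined by $\sigma$ and $\b=(b_1,\dots,b_d)$; here the hypothesis $n\geq d$ enters, forcing every sequence of weight $\omega$ to be a bijection onto $\set{1,d}$ with trivial $\Si_d$-stabiliser, so that these $\xi_{\i_\omega,\j,\b}$ are exactly the diagonal-orbit representatives indexing a $\k$-basis of $\xi_\omega S^A(n,d)\xi_\omega$. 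Thus $\Phi$ is a $\k$-linear bijection. Multiplicativity, $\Phi((x\tensor\sigma)(y\tensor\tau))=\Phi(x\tensor\sigma)\Phi(y\tensor\tau)$, is then a direct computation: the left-hand side is evaluated using the wreath product rule \eqref{wr}, the right-hand side using the inner multiplication of $S^A(n,d)$, which is the componentwise product inherited from $\Mat_n(A)^{\tensor d}$, and the matrix identities $E_{p,q}E_{r,s}=\delta_{qr}E_{p,s}$ collapse the resulting double sum over $\Si_d$ to a single one.

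For (i), recall that $\VV^{\tensor d}=\Gamma^\omega\VV$ as a left $S^A(n,d)$-module. I would take $\phi\colon S^A(n,d)\xi_\omega\to\VV^{\tensor d}$ to be the restriction of the action map $s\mapsto s\cdot v_{\tensor\omega}$; this is a left $S^A(n,d)$-module homomorphism with $\phi(\xi_\omega)=\xi_\omega v_{\tensor\omega}=v_{\tensor\omega}$, since $v_{\tensor\omega}$ has weight $\omega$. Computing the action of a basis element $\xi_{\i,\j,\b}$ on $v_{\tensor\omega}$ factorwise — only the summand whose sequence of column indices equals $\i_\omega$ survives, because $E_{ij}v_k=\delta_{jk}v_i$ — one finds that $\phi$ carries the basis $\{\xi_{\i,\j,\b}\,:\,\mu(\j)=\omega\}$ of $S^A(n,d)\xi_\omega$ bijectively onto the standard $\k$-basis of $\VV^{\tensor d}$ (again using $n\geq d$), so $\phi$ is an isomorphism of left $S^A(n,d)$-modules. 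For the bimodule claim, $S^A(n,d)\xi_\omega$ is a right $\xi_\omega S^A(n,d)\xi_\omega$-module by right multiplication, hence a right $A\wr\Si_d$-module through $\Phi$; I would check that $\phi$ intertwines this with the action \eqref{space2} on $\VV^{\tensor d}$, which reduces to evaluating both sides on the cyclic generator $\xi_\omega$ and using the explicit formula of (ii) together with \eqref{space2}. Uniqueness is automatic, as $S^A(n,d)\xi_\omega$ is generated over $S^A(n,d)$ by $\xi_\omega$.

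Part (iii) then follows formally. By (i), $\End_{S^A(n,d)}(\VV^{\tensor d})\cong\End_{S^A(n,d)}\big(S^A(n,d)\xi_\omega\big)$, and for any ring $R$ and idempotent $e\in R$ one has the standard identification of $\End_R(Re)$ with $eRe$ (an endomorphism $f$ corresponding to right multiplication by $f(e)\in eRe$); composing with (ii) gives $\End_{S^A(n,d)}(\VV^{\tensor d})\cong A\wr\Si_d$. The only point needing care is to fix the convention for composing endomorphisms of the left module $\VV^{\tensor d}$ so that the comparison is an algebra isomorphism rather than its opposite — the same convention under which the right action \eqref{space2} produces the comparison map. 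I expect the one genuinely error-prone step to be the multiplicativity check in (ii): all the conceptual ingredients — the outer/inner multiplication dichotomy on $\Gamma(\Mat_n(A))$, the parametrisation of bases by diagonal $\Si_d$-orbits, and the constraint $n\geq d$ — are already in hand, but reconciling \eqref{wr} with the inner product means tracking several right $\Si_d$-actions at once, and it is easy to be off by an inverse. Everything else is a rank count or a one-line formal deduction.
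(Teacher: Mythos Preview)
The paper does not give its own proof of this proposition; it is quoted from \cite{EK1} (summarising (5.15) and (5.17) there) without argument. Your reconstruction is sound and follows the standard route: establish (ii) by matching the $\k$-basis of $A\wr\Si_d$ with the diagonal-orbit basis of $\xi_\omega S^A(n,d)\xi_\omega$ and checking multiplicativity directly, realise (i) via the orbit map $s\mapsto s\cdot v_{\tensor\omega}$, and deduce (iii) from the idempotent identification $\End_R(Re)\cong (eRe)^{\op}$. Your caveat about conventions is exactly the right place to be careful: the paper's right action of $\Si_d$ on tensors must be reconciled with the notation $i\sigma:=\sigma^{-1}(i)$, and the opposite in $(eRe)^{\op}$ is absorbed by the fact that $A\wr\Si_d$ already acts on $\VV^{\tensor d}$ from the right. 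With those conventions fixed, the compatibility check in (i) reduces, as you say, to verifying $\Phi(w)\cdot v_{\tensor\omega}=v_{\tensor\omega}\cdot w$ at the cyclic generator, which is a short computation once the shuffle product is expanded and the single surviving $\Si_d$-summand is identified.
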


\section{Cauchy Decompositions} \label{S:Cauchy}

The Cauchy decomposition for symmetric algebras 
via Schur modules 
\cite{ABW} 
is an analogue of Cauchy's formula for 
symmetric functions \cite{Cauchy, Mac}.  
A corresponding 
decomposition for divided powers \cite{HK, Krause1}
is defined in terms of 
Weyl (or co-Schur)  modules.  
In this section, we describe  
a generalized Cauchy decomposition (Theorem \ref{thm:gen_Cauchy})
for divided powers   
of an $(A,B)$-bimodule  
with respect to a given filtration on the bimodule.

\subsection{Weyl modules}\label{ss:Weyl}

Weyl modules are defined in 
\cite[Definition II.1.4]{ABW} 
as the image of a single map from a tensor 
product of divided powers of a  module into a tensor 
product of exterior powers. 
We use an equivalent definition from the proof of \cite[Theorem II.3.16]{ABW}) 
which involves quotients of divided powers.
\smallskip

Throughout the section,  we fix some $d\in \N$. 
Suppose $\lambda \in  \L_d(\N)$,  
and let $M\in \P_\k$. 
For each pair $(i,t)$ with $1\leq i < l(\lambda)$ 
and $1\leq t \leq \lambda_{i+1}$, 
let us write 
\begin{equation}\label{eq:pm}
\lambda(i,t) =\, 
(\lambda_1,\, \dots,\, \lambda_{i-1} ,\, 
\lambda_i +t,\, \lambda_{i+1}-t,\, \lambda_{i+1},\, \dots, \lambda_m)\, 
\in\,  \L_d(\N).
\end{equation} 
Then write
$\gamma_{\lambda(i,t)} :  \Gamma^{\lambda(i,t)} M \to \Gamma^\lambda M$
to denote the standard homomorphism 
corresponding to the matrix 
\[ \gamma_{\lambda(i,t)} := 
 \rm{diag}(\lambda_1, \lambda_2, \dots ) 
+ t E_{i+1, i} - t E_{i+1, i}. \]
Similarly, let 
$\gamma^{\tr}_{\lambda(i,t)} :  
\Gamma^\lambda M \to 
\Gamma^{\lambda(i,t)} M$ 
denote the map 
corresponding to the transpose 
of the above matrix.

\begin{definition}[\cite{ABW}]
Suppose $M\in \P_\k$ 
and $\lambda\in  \Lambda_d^+(\N)$.  
Let $\square_\lambda(M)$ denote the 
$\k$-submodule of $\Gamma^\lambda M$ 
defined by 
\[\square_\lambda(M) :=\,  
\sum_{i \geq 1} \sum_{t=1}^{\lambda_{t+1} }
\rm{Im}(\gamma_{\lambda(i,t)} )
\subset \Gamma^\lambda M.\]
The {\em Weyl module}, 
$W_\lambda(M)$, is defined as 
the quotient $\k$-module 
\[ W_\lambda(M) 
:= \Gamma^\lambda M
\big / \square_\lambda(M). \]
\end{definition}

Let $A$ be a $\k$-algebra and
suppose now that $M \in A\lmod$.
Then $\square_\lambda(M)$ is a 
$\Gamma^d A$-submodule of $\Gamma^\lambda M$, 
since the standard homomorphisms are 
$\Gamma^d A$-module maps. 
It follows that $W_\lambda(M)$ is a $\Gamma^d A$-module.
In particular, 
$W_\lambda(\k^n)$ is an 
$S(n,d)$-module.

\subsection{The standard basis}
Consider a fixed partition 
$\lambda
=(\lambda_1, \lambda_2, \dots ) 
\in \Lambda^+_d(\N)$.
The {\em Young diagram} 
of $\lambda$ is the following subset of 
$\mathbb{N}\times \mathbb{N}$:
\[\set{\lambda} := \{(i,j)\ |\ 1\leq i\leq l(\lambda),\ 
1\leq j \leq \lambda_i\}.\] 
Suppose $\B$ is a finite totally ordered set. 
Let ${\Tab}_{\lambda}(\B)$ denote the set of all functions 
${\tt T}: \set{\lambda} \to \B$, 
 called {\em tableaux} ({\em of shape $\lambda$}). 
\smallskip

A tableau ${\tt T}$ will be identified 
with the diagram 
obtained by placing each value 
$\rm{T}_{i,j}:= {\tt T}(i,j)$ in the $(i,j)$-th entry of 
$\set{\lambda}$. 
For example if  ${\tt T}\in \Tab_{(3,2)}(\B)$, 
then we write 
\begin{equation}\label{eq:tab}
{\tt T}\, =
\begin{array}{ccc}
\rm{T}_{1,1} & \rm{T}_{1,2}  & \rm{T}_{1,3} \\[3pt]
\rm{T}_{2,1} & \rm{T}_{2,2}  &  
\end{array}
\end{equation}
We say that a tableau ${\tt T}$ 
is {\em row} ({\em column}) {\em standard} 
if each row (column)  is 
a nondecreasing (increasing)  function of $i$ (resp.~$j$), 
and ${\tt T}$ is {\em standard} 
if it is both row and column standard. 
\smallskip

Let $\st_\lambda(\B) \subset \Tab_\lambda(\B)$ 
denote the subset of all standard tableaux. 
This subset is nonempty if and only if 
$l(\lambda) \leq \sharp\B$. 
In particular, suppose $l(\lambda) \leq \sharp\B$ 
and assume the elements of $\B$ are listed as in \eqref{setB}. 
Then we write 
${\tt T}^\lambda = {\tt T}^\lambda(\B)$ 
to denote the standard tableau  in $\st_\lambda(\B)$ 
with entries 
 $ \rm{T}^{\lambda}_{i,j} := b_i^\B$ for all $(i,j) \in \set{\lambda}$. 
For example, 
if $d=7$, $\lambda = (4,2,1)$ and $\B=\set{1,3}$, then 
\begin{equation}\label{eq:tab}
{\tt T}^\lambda = 
\begin{array}{cccc}
1 & 1  & 1 & 1  \\
2  & 2  &   &  \\
3 & & & 
\end{array}
\end{equation}

Fix a free $\k$-module $V$ 
with finite ordered basis $\{x_b\}_{b\in \B}$. 
If ${\tt T}\in \Tab_{\lambda}(\B)$, 
then for $q=l(\lambda)$ and $i\in \set{1,q}$ we write 
\[{\tt T}_i:= {\tt T}(i,-) \in \seq^{\lambda_i}(\B)\]
to denote the  to the $i$-th row of ${\tt T}$, 
and we set 
\[x_{\tt T} :=\,  
x_{ {\tt T}_{1}} \tensor \cdots \tensor x_{  {\tt T}_{q} } \ 
\in \Gamma^\lambda V.\]
Notice that the set of 
$x_{\tt T}$ paramaterized by all row standard 
${\tt T}\in \Tab_\lambda(\B)$ 
forms a basis of $\Gamma^\lambda V$. 
\smallskip

The following result describes a basis for Weyl modules. 

\begin{proposition}[\cite{ABW}, Theorem III.3.16] \label{prop:standard}
Let $\lambda\in \Lambda^+(\N)$ 
and suppose $V$ is a free $\k$-module 
with a finite ordered basis $\{x_b\}_{b\in \B}$.  
Then the Weyl module $W_\lambda(V)$ is also a free $\k$-module, 
with basis given by the set of images 
\[\{\bar{x}_{\tt T} := \pi(x_{\tt T})\, |\ 
{\tt T}\in \st_\lambda(\B)\}\]
under the canonical projection 
$\pi: \Gamma^\lambda V \onto
\Gamma^\lambda V\big/\square_\lambda(V)$. 
\end{proposition}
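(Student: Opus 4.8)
The plan is to reduce the statement to known facts about the classical ABW straightening for Weyl modules over a field and to the standard ``base change'' properties of divided powers. First I would recall that everything in sight is defined by universal constructions that commute with base change: $\Gamma^\lambda V$ is a free $\k$-module with the row-standard basis $\{x_{\tt T}\}$, and the submodule $\square_\lambda(V)$ is the image of an explicit $\k$-linear map (a sum of standard homomorphisms $\gamma_{\lambda(i,t)}$) whose source and target are free. So I would argue that it suffices to exhibit, over $\k$, a spanning set of $W_\lambda(V)$ indexed by $\st_\lambda(\B)$ together with a linear independence statement; and since $\sharp\B = \mathrm{rank}\,V =: n$, this is exactly the content of \cite[Theorem II.3.16]{ABW} for the module $\k^n$, which is proved there to hold over an arbitrary commutative ring.

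The key steps, in order, are as follows. \emph{Spanning.} First I would show $\{\bar x_{\tt T} : {\tt T}\in\st_\lambda(\B)\}$ spans $W_\lambda(V)$. Starting from the row-standard basis $\{x_{\tt T}\}$ of $\Gamma^\lambda V$, one uses the ABW straightening relations — which say precisely that, modulo $\square_\lambda(V)$, a row-standard but non-column-standard $x_{\tt T}$ is a $\Z$-linear combination of $x_{\tt T'}$ with ${\tt T'}$ ``more standard'' in a suitable (e.g.\ column-reading lexicographic) order — to rewrite each $\bar x_{\tt T}$ as a combination of the $\bar x_{\tt T'}$ with ${\tt T'}$ standard. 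These relations are exactly the images of the maps $\gamma_{\lambda(i,t)}$ defining $\square_\lambda(V)$, so they hold by construction; a terminating induction on the chosen order gives the spanning statement. \emph{Independence / freeness.} Then I would invoke the dual/classical result: the number of standard tableaux $\sharp\,\st_\lambda(\B)$ equals the rank of the Weyl module $W_\lambda(\k^n)$ over $\Z$ (equivalently the dimension of the irreducible $GL_n$-module of highest weight $\lambda$ over $\Q$, by Weyl's character formula / the hook content formula), and ABW show $W_\lambda$ is free of that rank with the standard basis. Concretely I would note $W_\lambda(V)$ is a quotient of the free module $\Gamma^\lambda V$ by the image of a map of free modules, hence finitely presented; its rank is computed after $\otimes_\k \Q(\k/\mathfrak p)$ or after base change to $\Z$ via the universal case, and it matches $\sharp\,\st_\lambda(\B)$; combined with the spanning set of that cardinality, a surjection between free modules of the same finite rank (after localizing, or using that the straightening relations are defined over $\Z$ with the standard basis already a $\Z$-basis in the universal case) forces the spanning set to be a basis.

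The cleanest way to package steps two and three is the ``universality'' argument: prove it once for $V_0 = \Z^n$ (this is literally \cite[III.3.16]{ABW}), observe that $\Gamma^\lambda(-)$, the maps $\gamma_{\lambda(i,t)}$, and hence $\square_\lambda(-)$ and $W_\lambda(-)$ all commute with the base change $\Z \to \k$ (because they are built from $\otimes$, $\Delta$, $\nabla$ on finite free modules, all of which commute with $-\otimes_\Z\k$), and conclude $W_\lambda(V) \cong W_\lambda(V_0)\otimes_\Z \k$ is free with basis the images of the $x_{\tt T}$, ${\tt T}\in\st_\lambda(\B)$. The main obstacle is the straightening law itself — verifying that the ABW relations genuinely lie in $\square_\lambda(V)$ as defined here via the $\gamma_{\lambda(i,t)}$, and that iterating them terminates — but since we are explicitly permitted to cite \cite{ABW}, this reduces to matching our definition of $\square_\lambda$ with theirs (via the proof of \cite[Theorem II.3.16]{ABW}, as already flagged in Section~\ref{ss:Weyl}) rather than reproving it. Everything else is routine bookkeeping with the row-standard basis and base change.
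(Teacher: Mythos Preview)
The paper does not give a proof of this proposition at all: it is stated as a citation of \cite[Theorem~III.3.16]{ABW} (note the paper's earlier remark in Section~\ref{ss:Weyl} that the definition of $W_\lambda$ via $\square_\lambda$ is taken from the proof of \cite[Theorem~II.3.16]{ABW}), and the text moves directly on to consequences. Your proposal is therefore not in conflict with the paper's ``proof''; you are simply sketching the argument that the paper defers to \cite{ABW}. Your outline is sound --- spanning via the straightening relations encoded in the images of the $\gamma_{\lambda(i,t)}$, then freeness via a base-change/universality argument from the $\Z$-case --- and you correctly identify that the only real content is checking that the present definition of $\square_\lambda(V)$ matches the ABW relations, which the paper already flags. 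In short: the paper cites the result; your proposal explains why the citation is valid, and nothing more is needed.
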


This result shows for example that 
the Weyl module $W_\lambda(V)$ is nonzero 
if and only if  
$l(\lambda)\leq\sharp \B$. 
Another consequence of the proposition is that 
 $W_\lambda(M)$ is a projective $\k$-module for any  
$M\in \P_\k$ (cf.\,\cite[p.\,1013]{Krause1}).

\subsection{The Cauchy decomposition}

Suppose $M,N\in \P_\k$.  
The maps $\psi^d$  
appearing in (\ref{commute}) can be 
generalized as follows. 
If $\lambda\in \L^+_d(\N)$, let 
\[\psi^\lambda(M,N) :\, \Gamma^\lambda M \tensor \Gamma^\lambda N
\, \to\, \Gamma^d (M\tensor N)\]
denote the map defined via the composition  
\begin{align*}
\Gamma^\lambda M \tensor \Gamma^\lambda N
\isoto
( \Gamma^{\lambda_1} M \tensor \Gamma^{\lambda_1} N) &
\tensor \dots \tensor
(\Gamma^{\lambda_m} M \tensor \Gamma^{\lambda_m} N)\\[.1cm]
\xrightarrow{\, \psi \tensor \dots \tensor \psi }
 \Gamma^{\lambda_1} (M \tensor N) &
\tensor \dots \tensor
\Gamma^{\lambda_m} (M \tensor N)
  \xrightarrow{\nabla} 
\Gamma^d (M\tensor N),
\end{align*}
where the first map permutes tensor factors and the 
last map is multiplication in the bialgebra $\Gamma(M\tensor N)$.
\smallskip

Let us write $\Gamma^\lambda: \P_\k \to \P_\k$ to denote the
tensor product of functors 
\[ \Gamma^\lambda := 
\Gamma^{\lambda_1}\tensor \cdots \tensor \Gamma^{\lambda_m} \]
defined in the same way as \eqref{tensor}.  
Then it follows from Lemma \ref{psi} that 
the maps $\psi^\lambda(M,N)$ induce a natural transformation 
\begin{equation}
\psi^\lambda : \Gamma^\lambda \boxtimes \Gamma^\lambda \to 
\Gamma^\lambda(- \tensor - )
\end{equation}
of bifunctors $\P_\k\times \P_\k \to \P_\k$. 
\smallskip

The following lemma is a special case of \cite[Proposition III.2.6]{HK} 
which describes the relationship between   
 $\psi$-maps and standard homomorphisms. 

\begin{lemma}[\cite{HK}]\label{lem:HK}
Suppose
 $\lambda\in  \L_d^+(\N)$, and set $q=l(\lambda)$.
Given a pair   $U,V$ of free $\k$-modules of finite rank, 
the following diagram is commutative
\begin{equation*}
\begin{tikzcd}[row sep=large]
\Gamma^{\lambda(i,t)}U \tensor \Gamma^{\lambda}V
\ar[rr, "\rm{id}\tensor {\gamma}^\tr_{\lambda(i,t)}"]
\ar[d, "\, \gamma_{\lambda(i,t)}\tensor\rm{id}", shift left=1.7ex ] 
& & \Gamma^{\lambda(i,t)}U \tensor \Gamma^{\lambda(i,t)}V
\ar[d, "\, \psi^{\lambda(i,t)}" ]
& & 
\Gamma^{\lambda}U \tensor \Gamma^{\lambda(i,t)}V
\ar[ll, "{\gamma}^\tr_{\lambda(i,t)}\tensor  \rm{id}" ' ]
\ar[d, "\, \rm{id} \tensor \gamma_{\lambda(i,t)}" , shift right=1.7ex]
\\
\hspace{0.5cm}
\Gamma^{\lambda}U \tensor \Gamma^{\lambda}V
\ar[rr, "\psi^\lambda", shorten >=1.1em ]
& &
\hspace{-0.4cm}
\Gamma^d(U \tensor V)
& & 
\Gamma^{\lambda}U \tensor \Gamma^{\lambda} V
\hspace{0.5cm}
\ar[ll,"\psi^\lambda" ']
\end{tikzcd}
\end{equation*}
for any 
$i\in \set{1,q-1}$ and $t \in \set{1,\lambda_{i+1}}$.
\end{lemma}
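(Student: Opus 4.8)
The plan is to reduce this to a purely combinatorial identity on basis elements, exploiting that all maps involved ($\psi^\lambda$, the standard homomorphisms $\gamma_{\lambda(i,t)}$ and their transposes $\gamma^\tr_{\lambda(i,t)}$) are built from (co)multiplications $\nabla, \Delta$ in the divided powers bialgebras, together with the transformations $\psi$. First I would fix ordered bases $\{u_a\}$ of $U$ and $\{v_b\}$ of $V$, so that $U \tensor V$ has the basis $\{u_a \tensor v_b\}$ indexed by pairs. Using the parameterization \eqref{basis2}, an element of $\Gamma^\lambda U \tensor \Gamma^\lambda V$ is a combination of tensors $x_{\tt S} \tensor x_{\tt T}$ with $\tt S, \tt T$ row-standard tableaux of shape $\lambda$; under $\psi^\lambda$ these map to elements of $\Gamma^d(U\tensor V)$ indexed (after summing over orbits) by ``bitableaux'' recording which $(u_a, v_b)$-pair sits in each box. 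The whole diagram then becomes a statement that two ways of building such a bitableau — either first applying a standard homomorphism on one factor and then $\psi$, or first applying $\psi^{\lambda(i,t)}$ after adjusting the shape — give the same element.

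The key steps, in order: (1) Observe that it suffices to prove commutativity of the left-hand square, since the right-hand square is obtained from it by applying the symmetry $\tw$ and Lemma \ref{psi}(2) — more precisely, swapping the roles of $U$ and $V$ and using $\Gamma^d(\tw) \circ \psi^\lambda(M,N) = \psi^\lambda(N,M) \circ \tw$ together with the fact that $\gamma_{\lambda(i,t)}$ and $\gamma^\tr_{\lambda(i,t)}$ are interchanged by transposing the defining matrix. (2) For the left square, unwind $\psi^{\lambda(i,t)}$ and $\psi^\lambda$ via their definitions as composites of $\psi$-maps on tensor factors followed by $\nabla$. (3) The map $\gamma_{\lambda(i,t)}\colon \Gamma^{\lambda(i,t)}U \to \Gamma^\lambda U$ acts nontrivially only in tensor positions $i$ and $i+1$, where it is the composite $\Gamma^{\lambda_i + t}U \tensor \Gamma^{\lambda_{i+1}-t}U \xrightarrow{\Delta \tensor \mathrm{id}} \Gamma^{\lambda_i}U \tensor \Gamma^t U \tensor \Gamma^{\lambda_{i+1}-t}U \xrightarrow{\mathrm{id}\tensor\nabla} \Gamma^{\lambda_i}U \tensor \Gamma^{\lambda_{i+1}}U$; similarly $\gamma^\tr_{\lambda(i,t)}$ is $\mathrm{id}\tensor\Delta$ followed by $\nabla\tensor\mathrm{id}$. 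So everything localizes to positions $i, i+1$, and the identity reduces to a small diagram involving only $\Gamma^{\lambda_i}, \Gamma^{\lambda_{i+1}}, \Gamma^t$ of $U$ and $V$. (4) This localized identity is then exactly the compatibility between the bialgebra structure maps $\nabla, \Delta$ of $\Gamma(M\tensor N)$ and the $\psi$-maps — namely that $\psi$ intertwines multiplication/comultiplication on the two sides — which is the content of the (Hopf-algebra-style) naturality already packaged in Lemma \ref{psi}(1) applied at each graded piece, combined with coassociativity/associativity.

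The main obstacle I expect is step (3)–(4): bookkeeping the rearrangement-of-tensor-factors isomorphisms so that the $\psi$'s, $\nabla$'s, and $\Delta$'s line up correctly, and checking that no sign or multiplicity discrepancy arises from the fact that $\nabla$ is the shuffle product (which introduces binomial coefficients, e.g. $x^{\tensor d}\ast x^{\tensor e} = \binom{d+e}{d} x^{\tensor(d+e)}$). Since $\Gamma(M\tensor N)$ is a genuine bialgebra and $\psi$ is a morphism compatible with both structures, these coefficients match on both routes through the diagram, but verifying it cleanly on the parameterizing basis of symmetric tensors is where the real work lies. An alternative, cleaner route that avoids explicit bases: invoke that \eqref{commute} and Lemma \ref{psi}(1) make $\psi$ a morphism of bialgebras in an appropriate sense, so that the diagram is a formal consequence of the functoriality of $\Gamma$ and the definition of standard homomorphisms as composites of structure maps; this is essentially how \cite{HK} phrase it, and I would follow that argument, citing \cite[Proposition III.2.6]{HK} for the general statement of which this lemma is the stated special case.
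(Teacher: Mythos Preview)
The paper does not actually prove this lemma: it is stated as a special case of \cite[Proposition III.2.6]{HK} and cited without proof. Your final remark --- to invoke \cite[Proposition III.2.6]{HK} directly --- is therefore exactly what the paper does, and would suffice on its own.

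That said, your outlined direct argument is essentially correct and is morally the content of the Hashimoto--Kurano proof. The reduction in step (1) via Lemma~\ref{psi}(2) is valid: the right square is obtained from the left by conjugating with $\tw$ on both the source and target, and $\psi^\lambda$ intertwines $\tw$ with $\Gamma^d(\tw)$. Step (3), localizing to rows $i$ and $i+1$, is also correct since the standard homomorphisms $\gamma_{\lambda(i,t)}$ and $\gamma^\tr_{\lambda(i,t)}$ are the identity on all other tensor factors. One small caution on step (4): your phrasing suggests that Lemma~\ref{psi}(1) alone (naturality of $\psi^d$) gives the required compatibility of $\psi$ with $\nabla$ and $\Delta$, but naturality is with respect to maps $\varphi: M\to M'$, $\psi: N\to N'$, not with respect to the bialgebra structure maps. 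What you actually need is the separate identity $\nabla \circ (\psi^c \tensor \psi^{d-c}) = \psi^d \circ (\nabla \tensor \nabla)$ (after the appropriate rearrangement of tensor factors), which follows directly from the commutative square \eqref{commute} defining $\psi$ together with the fact that $\nabla$ on $\Gamma(M\tensor N)$ is the restriction of the shuffle on $(M\tensor N)^{\tensor d}$. This is straightforward but is not literally Lemma~\ref{psi}(1); once you have it, the binomial-coefficient bookkeeping you worry about disappears, since both routes compute the same symmetrization in $(U\tensor V)^{\tensor d}$.
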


Recalling the total order $\preceq$ on $\Lambda^+_d(\N)$ from Definition \ref{lex1}, 
write $\lambda^+$ to denote 
the immediate successor  
of a partition $\lambda$ 
and set $(d)^+ := \infty$.
The {\em Cauchy filtration}  is then defined as the chain 
\begin{equation*}
0\, =\, \F_{\infty}\, 
\subset  \F_{(d)}\, \subset\, 
\dots 
\subset\, \F_{(1,\dots, 1)}\, = \, \Gamma^d(M\tensor N)
\end{equation*}
where $\F_\lambda 
:= \sum_{\mu \succeq \lambda} \rm{Im}(\psi^\lambda)$.
\smallskip

The following result 
describes the factors of this filtration.

\begin{theorem} [{\cite[Theorem III.2.7]{HK}}] \label{thm:HK}
Let $U,V$ be free $\k$-modules of finite rank. 
Then for each $\lambda \in \L^+_d(\N)$, 
the map $\psi^\lambda$ induces an isomorphism 
\[\bar{\psi}^\lambda: 
W_\lambda(U) \tensor W_\lambda(V)
\, \isoto\,  \F_\lambda/ \F_{\lambda^+}\]
which makes the following diagram commutative: 
\begin{equation*}
\begin{tikzcd}[row sep=normal]
\Gamma^{\lambda}U \tensor \Gamma^{\lambda}V
\arrow[r,  shorten <=3, shorten >=3, " \psi^\lambda"]  
\arrow[d, two heads] 
& \F_{\lambda} 
 \arrow[d, two heads] \\
W_\lambda(U) \tensor W_\lambda(V)
 \arrow[r, xshift =5, shorten <=-3, shorten >=-3,  "\, \bar{\psi}^\lambda" ]
&\hspace{1em} \F_\lambda/ \F_{\lambda^+} 
\end{tikzcd}
\end{equation*}
Hence, the associated graded module of the Cauchy filtration is 
\[ \bigoplus_{\lambda\in \L^+_d(\N)} 
W_\lambda(U) \tensor W_\lambda(V).\]
\end{theorem}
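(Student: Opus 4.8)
The plan is to prove Theorem \ref{thm:HK} by induction along the total order $\preceq$ on $\Lambda_d^+(\N)$, exploiting the standard monomial basis of $\Gamma^\lambda V$ (indexed by row standard tableaux) and the basis of $W_\lambda(V)$ from Proposition \ref{prop:standard} (indexed by standard tableaux). First I would establish that $\psi^\lambda$ sends the submodule $\square_\lambda(U)\tensor\Gamma^\lambda V + \Gamma^\lambda U\tensor\square_\lambda(V)$ into $\F_{\lambda^+}$. This is exactly where Lemma \ref{lem:HK} is used: that lemma expresses $\psi^{\lambda}\circ(\gamma_{\lambda(i,t)}\tensor\mathrm{id})$ in terms of $\psi^{\lambda(i,t)}\circ(\mathrm{id}\tensor\gamma^{\tr}_{\lambda(i,t)})$, and since $\lambda(i,t)\succ\lambda$ (the defining operation moves boxes up, strictly increasing the partition in the lexicographic order $\preceq$ on $\Lambda_d^+(\N)$), the image of $\psi^{\lambda(i,t)}$ lies in $\F_{\lambda^+}$. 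Running over all generators $\mathrm{Im}(\gamma_{\lambda(i,t)})$ of $\square_\lambda$ on each tensor factor shows $\psi^\lambda$ descends to a well-defined map $\bar\psi^\lambda: W_\lambda(U)\tensor W_\lambda(V)\to \F_\lambda/\F_{\lambda^+}$, giving the commuting square.

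Next I would show $\bar\psi^\lambda$ is surjective. By definition $\F_\lambda = \sum_{\mu\succeq\lambda}\mathrm{Im}(\psi^\mu)$, so $\F_\lambda/\F_{\lambda^+}$ is generated by the image of $\mathrm{Im}(\psi^\lambda)$, and that image is precisely the image of $\bar\psi^\lambda$ since the higher-$\mu$ contributions already sit in $\F_{\lambda^+}$. The substantive direction is injectivity, which I expect to be the main obstacle. The strategy there is a rank/counting argument together with a triangularity (straightening) claim. Over $\k=\Z$ (or working with the universal free case and then base-changing), one compares ranks: $\bigoplus_{\mu\in\Lambda_d^+(\N)} W_\mu(U)\tensor W_\mu(V)$ has rank $\sum_\mu (\sharp\st_\mu(\B_U))(\sharp\st_\mu(\B_V))$, which by the classical Cauchy identity for the combinatorics of semistandard-type tableaux equals the rank of $\Gamma^d(U\tensor V)$ (whose basis \eqref{basis2} is indexed by $\Si_d$-orbits of sequences into $\B_U\times\B_V$, i.e.\ by $\Lambda_d(\B_U\times\B_V)$). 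Since the filtration is finite and each associated graded piece surjects from the corresponding $W_\mu\tensor W_\mu$, a rank count forces every $\bar\psi^\mu$ to be an isomorphism onto its graded piece, provided all modules involved are free of the stated ranks — which holds by Proposition \ref{prop:standard} and the remarks on freeness of $\Gamma^\lambda V$.

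To make the rank count rigorous one must know $\F_\lambda$ is a direct summand (or at least that the sequence of quotients behaves well), so I would instead argue via an explicit spanning/independence argument: the elements $\psi^\lambda(x_{\tt S}\tensor x_{\tt T})$ with ${\tt S}\in\st_\lambda(\B_U)$, ${\tt T}\in\st_\lambda(\B_V)$, as $\lambda$ ranges over $\Lambda_d^+(\N)$, together span $\Gamma^d(U\tensor V)$ modulo the filtration structure, and their total number matches the rank of $\Gamma^d(U\tensor V)$; hence they form a basis and each graded piece is free with the stated basis. Surjectivity of the spanning statement follows by downward induction on $\preceq$: a monomial basis element $x_\b$ of $\Gamma^d(U\tensor V)$ indexed by a sequence of weight $\nu\in\Lambda_d(\B_U\times\B_V)$ can be written, via the "straightening laws" for $\Gamma^\lambda$, as $\psi^\lambda$ of a standard-tableau pair for the appropriate $\lambda$ plus terms that are $\psi^\mu$ of things with $\mu\succ\lambda$, i.e.\ lying in $\F_{\lambda^+}$. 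The hard part is precisely this straightening/triangularity lemma — showing that modulo $\square_\lambda$ on each factor and modulo $\F_{\lambda^+}$ overall, the images of standard pairs are both spanning and independent; this is the content of \cite[Theorem III.2.7]{HK} and is where the real work lies, the rest being bookkeeping with the orders $\preceq$ and the naturality in Lemma \ref{psi}.
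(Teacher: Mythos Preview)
Your proposal is correct and follows the paper's proof exactly: Lemma~\ref{lem:HK} gives well-definedness of $\bar\psi^\lambda$ (since $\lambda(i,t)\succ\lambda$), surjectivity is immediate from the definition of $\F_\lambda$, and injectivity is deduced from the rank equality $\mathrm{rank}\,\Gamma^d(U\otimes V)=\sum_\lambda\mathrm{rank}\bigl(W_\lambda(U)\otimes W_\lambda(V)\bigr)$. The paper's proof ends there; your final paragraph overestimates the remaining work, since no separate straightening lemma is needed---the surjectivity you already established shows (by downward induction on $\preceq$) that the lifted standard elements $\psi^\lambda(x_{\tt S}\otimes y_{\tt T})$ span the free module $\Gamma^d(U\otimes V)$, and a surjection between free $\k$-modules of equal finite rank is automatically an isomorphism.
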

 
\begin{proof} 
We recall the proof from \cite{HK}. 
It follows by definition that
$W_\lambda(U) \tensor W_\lambda(V)$ 
is the quotient of 
$\Gamma^\lambda U \tensor \Gamma^\lambda V$ 
by the submodule  
$\square_{\lambda}(U) \tensor \Gamma^\lambda V
+ \Gamma^\lambda U \tensor \square_{\lambda}(V) $. 
Hence, by Lemma \ref{lem:HK} we have 
\[\square_{\lambda}(U) \tensor \Gamma^\lambda V\, 
+\, \Gamma^\lambda U \tensor \square_{\lambda}(V) \
\subset\ \rm{Im}(\psi^{\lambda(i,t)})\ 
\subset\ \F_{\lambda^+},\]
since $\lambda(i,t) > \lambda$. 
This proves the existence of the induced map 
$\bar{\psi}^\lambda$ satisfying the given commutative square. 
It is clear that  
$\bar{\psi}^\lambda$ is surjective.   
Comparing the  ranks of 
$\Gamma^d(U\tensor V)$ 
and $\bigoplus_{\lambda\in \L^+_d(\N)} 
W_\lambda(U) \tensor W_\lambda(V)$ 
shows that $\bar{\psi}^\lambda$ must be an isomorphism 
for each $\lambda$. 
\end{proof}

Given free $\k$-modules  $U,V\in \P_\k$ with finite ordered bases 
 $\{x_b\}_{b\in \B}$ and  $\{y_c\}_{c\in \C}$, respectively, 
let $\F^{\,\prime}_{\hm{3}\lambda} 
\subset \Gamma^d(U\tensor V)$ 
denote the $\k$-submodule generated 
by 
\[\{ \psi^{\lambda}(x_S \tensor y_T)\ |\  
S \in \st_\lambda(\B),\,  
T \in \st_\lambda(\C)\}\]
where $\F^{\,\prime}_{\hm{3}\lambda}$ 
is nonzero only if  
$l(\lambda) \leq \rm{min}(\sharp \B  , \sharp \C)$.

\begin{corollary}\label{cor:HK}
For each $\lambda \in \L_d^+(\N)$,
the $\k$-submodule 
$\F^{\,\prime}_{\hm{3}\lambda} \subset \Gamma^d(U\tensor V)$ is free,  
and there is a corresponding decomposition: 
\[\Gamma^d(U\tensor V)
=\bigoplus_{\lambda} 
\F^{\, \prime}_{\hm{3}\lambda},
\quad \text{such that} \quad
\F_{\lambda}
= \bigoplus_{\mu\geq \lambda} \F^{\,\prime}_{\hspace{-.01cm}\mu} 
\quad \text{for all } \lambda \in \L^+_d(\N).\] 
\end{corollary}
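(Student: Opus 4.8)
The plan is to bootstrap Corollary~\ref{cor:HK} directly from Theorem~\ref{thm:HK} together with the standard basis description of Weyl modules in Proposition~\ref{prop:standard}. First I would observe that the generators of $\F^{\,\prime}_{\hm{3}\lambda}$ are precisely the elements $\psi^\lambda(x_S\tensor y_T)$ with $S\in\st_\lambda(\B)$, $T\in\st_\lambda(\C)$, and that by the commutative square in Theorem~\ref{thm:HK} their images in $\F_\lambda/\F_{\lambda^+}$ are $\bar\psi^\lambda(\bar x_S\tensor\bar y_T)$. Since $\bar\psi^\lambda$ is an isomorphism $W_\lambda(U)\tensor W_\lambda(V)\isoto\F_\lambda/\F_{\lambda^+}$ and, by Proposition~\ref{prop:standard}, the elements $\bar x_S\tensor\bar y_T$ ($S\in\st_\lambda(\B)$, $T\in\st_\lambda(\C)$) form a $\k$-basis of $W_\lambda(U)\tensor W_\lambda(V)$, their images form a $\k$-basis of the free module $\F_\lambda/\F_{\lambda^+}$.

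Next I would run a downward induction on $\lambda$ along the total order $\preceq$, starting from the top $(d)$ and using that $\F_{(d)^+}=\F_\infty=0$. The inductive claim is that $\F_\lambda$ is free with basis the union over $\mu\succeq\lambda$ of the sets $\{\psi^\mu(x_S\tensor y_T)\mid S\in\st_\mu(\B),\,T\in\st_\mu(\C)\}$, and hence $\F_\lambda=\bigoplus_{\mu\succeq\lambda}\F^{\,\prime}_{\hspace{-.01cm}\mu}$ with each $\F^{\,\prime}_{\hspace{-.01cm}\mu}$ free on the indicated generators. For the inductive step one has the short exact sequence $0\to\F_{\lambda^+}\to\F_\lambda\to\F_\lambda/\F_{\lambda^+}\to0$; by induction $\F_{\lambda^+}$ is free with the stated basis, and by the previous paragraph $\F_\lambda/\F_{\lambda^+}$ is free with basis the images of the $\psi^\lambda(x_S\tensor y_T)$. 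A standard lifting argument for short exact sequences of free modules (the quotient being free, the sequence splits, and a basis of $\F_\lambda$ is obtained by adjoining any lift of a basis of the quotient to a basis of the submodule) then shows $\F_\lambda$ is free with the asserted basis, the $\psi^\lambda(x_S\tensor y_T)$ serving as the required lifts. Taking $\lambda=(1,\dots,1)$, the minimal element, gives $\Gamma^d(U\tensor V)=\bigoplus_\lambda\F^{\,\prime}_{\hm{3}\lambda}$.

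Finally I would note that the relation $\F_\lambda=\bigoplus_{\mu\succeq\lambda}\F^{\,\prime}_{\hspace{-.01cm}\mu}$ follows immediately from the inductive description, and that freeness of each $\F^{\,\prime}_{\hspace{-.01cm}\lambda}$ on its given generating set is part of the same conclusion; alternatively $\F^{\,\prime}_{\hm{3}\lambda}$ is isomorphic to $\F_\lambda/\F_{\lambda^+}$ via the projection, which is free by Theorem~\ref{thm:HK}. The main obstacle, such as it is, is purely bookkeeping: one must be careful that the total order $\preceq$ on $\L^+_d(\N)$ used to index the Cauchy filtration is well-ordered from below (so the downward induction terminates) and that the partial order $\geq$ appearing in the statement of the corollary agrees with $\succeq$ on partitions, which it does by Definition~\ref{lex1}. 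There is no serious analytic or combinatorial difficulty; everything reduces to the splitting of short exact sequences of finite-rank free modules over the noetherian domain $\k$, combined with the already-established Theorem~\ref{thm:HK} and the standard-basis theorem for Weyl modules.
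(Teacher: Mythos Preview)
Your proposal is correct and follows essentially the same approach as the paper: both use Proposition~\ref{prop:standard} to obtain a basis of $W_\lambda(U)\tensor W_\lambda(V)$, transport it via the isomorphism $\bar\psi^\lambda$ of Theorem~\ref{thm:HK} to a basis of $\F_\lambda/\F_{\lambda^+}$, and then run a downward induction along $\preceq$ to obtain the decomposition $\F_\lambda=\F_{\lambda^+}\oplus\F'_\lambda$. The only cosmetic difference is that the paper first notes linear independence of the $\psi^\lambda(x_S\tensor y_T)$ (hence freeness of $\F'_\lambda$) and then states the splitting, whereas you package both into the short-exact-sequence lifting; the content is identical.
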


\begin{proof} 
Suppose $\lambda \in \L^+_d(\N)$, and 
set $\T= \st_\lambda(\B)\times \st_\lambda(\C)$. 
By Proposition \ref{prop:standard},  
$\{\bar{x}_S \tensor \bar{y}_T
\, |\,  (S,T) \in  \T\}$ 
forms a basis of $W_\lambda(U) \tensor W_\lambda(V)$. 
So 
\[ \{ \bar{\psi}^\lambda(x_S \tensor y_T)\, |\, (S,T) \in \T\} \] 
gives a basis for $\F_\lambda/ \F_{\lambda^+}$ 
by Theorem \ref{thm:HK}. This shows that the subset 
\[\{ \psi^\lambda(x_S \tensor y_T) \, |\, (S,T) \in 
\T\}\ \subset\ \Gamma^d ( U \tensor V)\]
is linearly independent. 
Thus $\F^{\,\prime}_{\hm{3}\lambda}$ 
is a free $\k$-submodule.   
It is also clear that 
$\F_\lambda = 
\F_{\lambda^+} \oplus \F^{\,\prime}_{\hm{3}\lambda}$,
and the required decompositions follow by induction. 
\end{proof}

\subsection{Bimodule filtrations}
In the remainder of this section, 
we fix a set  $\{ J_1', \dots, J_r'\}$ 
of nonzero free $\k$-submodules, 
$J_i' \subset J$, such that setting 
\begin{equation}\label{eq:decomp}
J_j := \bigoplus_{1\leq i \leq j} J_i'\quad  
\text{for}\ \
 j \in \set{1, r}
\end{equation}
yields a chain 
\[ 0= J_0 \subset J_1 \subset \dots \subset J_r = J \] 
of $(A, B)$-bimodules. 
\smallskip

Recalling the notation \eqref{invariant2}, 
we then have for each $\mu\in \L_d(r)$  the following $\k$-submodules of $\G^d J$: 
\[ J'_\mu = \<J'_{\tensor \mu}\>^{\Si_d},  \qquad
J_\mu = \<J_{\tensor \mu}\>^{\Si_d}.\] 
Note first that $J_\mu$ is a $\Gamma^d(A\tensor B)$-submodule 
of $\Gamma^d J$, 
and hence a ($\G^d A, \G^d B$)-bimodule.
It is also not difficult to check that there is a decomposition of $J^{\tensor d}$ 
into free $\k$-submodules 
\[ J^{\tensor d} = 
\bigoplus_{\mu \in \L_d(r)} 
\< J'_{\tensor \mu}\>. \]
By taking $\Si_d$-invariants on both sides, we thus obtain the following   
decomposition 
\begin{equation}\label{eq:decomp2}
\Gamma^d J\ =\, \bigoplus_{\mu \in \L_d(r)} 
\<J'_{\tensor\mu}\> \cap \Gamma^d J\
=\, 
\bigoplus_{\mu \in \L_d(r)} 
J'_\mu.
\end{equation}

Next recall that the {\em dominance order} on 
$\L_d(r)$ is the partial order 
defined by setting 
$\mu \trianglelefteq \nu$ if 
\[ {\sum_{i\leq j} \mu_i }\  \leq \ 
{\sum_{i\leq j} \nu_i } \ 
\text{ for } \ j\in \set{1, r}. \] 
Notice that 
$J_\mu \subset J_\nu$ if and only if 
$\mu \trianglerighteq \nu$.  
We further have 
$J_\nu = J'_\nu \oplus 
\sum_{\mu \triangleright \nu} 
J_\mu$,
and it follows by induction that 
\begin{equation}\label{eq:decomp_nu}
J_{\nu}
= \bigoplus_{\mu \trianglerighteq \nu} 
J'_{\mu}
\end{equation}
for all $\nu \in \L_d(r)$, which generalizes  
the decomposition \eqref{eq:decomp2} of $\Gamma^d J$.
\smallskip

Consider the map 
$\nabla: \Gamma^\mu J \to \Gamma^d J$ 
given by $r$-fold (outer) 
multiplication in 
$\Gamma( J)$, for some 
 $\mu \in \L_d(r)$. 
Note that the restriction 
\[\nabla^\mu : 
\Gamma^{(\mu)}(J_1, \dots, J_r) \to 
\Gamma^d J \quad 
\left(\text{resp.~} '\nabla^\mu : 
\Gamma^{(\mu)}(J'_1, \dots, J'_r) \to  
\Gamma^d J\right)\] 
is a $(\Gamma^d A, \Gamma^d B)$-bimodule 
(resp.~$\k$-module) homomorphism. 

\begin{lemma}\label{lem:image}
Suppose $\nu \in \L_d(r)$. Then
\\[-.25cm]
\begin{enumerate}
\item
 $J'_\nu =  \rm{Im}\, '\nabla^\nu$,
\\[-.15cm]
\item
$'\nabla^{\nu} : \Gamma^{(\nu)}(J'_1,\dots, J'_r) \isoto 
 J'_\nu$ is an isomorphism of
$\k$-modules, 
\\[-.15cm]
\item 
$J_\nu = \sum_{\mu\trianglerighteq\nu}
\rm{Im}\, \nabla^\mu$, 
summing over $\mu\in \L_d(r)$. 
\end{enumerate}
\end{lemma}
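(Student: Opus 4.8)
The key computational tool is the decomposition \eqref{eq:decomp2} of $\Gamma^d J$ into the pieces $J'_\mu = \langle J'_{\tensor\mu}\rangle^{\Si_d}$, together with the fact that each $J'_j$ is a free $\k$-module and $J_j = \bigoplus_{i\le j} J'_i$. First I would prove (1) and (2) simultaneously. The map $'\nabla^\nu$ is the restriction of $r$-fold outer multiplication to $\Gamma^{(\nu)}(J'_1,\dots,J'_r) = \Gamma^{\nu_1}J'_1 \tensor \cdots \tensor \Gamma^{\nu_r}J'_r$. Using bases: fix finite ordered bases of each $J'_j$; then $\Gamma^{\nu_j}J'_j$ has the basis \eqref{basis} indexed by weights of size $\nu_j$ on the index set of $J'_j$. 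The element $x^{(1)}\ast\cdots\ast x^{(r)}$, where $x^{(j)}$ runs over this basis of $\Gamma^{\nu_j}J'_j$, is (up to a multinomial coefficient that one must handle carefully, or better, by working with the orbit-sum basis \eqref{basis2}) precisely the basis element $x_\b$ of $\Gamma^d J$ attached to a sequence $\b\in\seq^d(\B_J)$ whose weight has exactly $\nu_j$ entries landing in the block corresponding to $J'_j$. These sequences, up to $\Si_d$, biject with the orbit-representatives whose $J'$-block-type is $\nu$, which is exactly the index set for the basis of $J'_\nu$ coming from \eqref{basis2} restricted to $\langle J'_{\tensor\nu}\rangle$. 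Hence $'\nabla^\nu$ sends a basis bijectively to a basis, giving both that its image is $J'_\nu$ (proving (1)) and that it is an isomorphism onto $J'_\nu$ (proving (2)).

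The cleanest way to organize this is to avoid multinomial coefficients entirely: recall from the excerpt that the isomorphism \eqref{eq:expon} $\Gamma(M)\tensor\Gamma(N)\isoto\Gamma(M\oplus N)$ is given by outer multiplication and yields the decomposition \eqref{eq:expon2}. Iterating \eqref{eq:expon} for $J = J'_1\oplus\cdots\oplus J'_r$ gives an isomorphism $\bigotimes_j \Gamma(J'_j)\isoto \Gamma(J)$ of (graded) algebras, under which $\Gamma^{(\nu)}(J'_1,\dots,J'_r)$ maps isomorphically onto its image, and that image is exactly the $\nu$-component $J'_\nu$ in the decomposition \eqref{eq:decomp2} — indeed \eqref{eq:decomp2} is nothing but the degree-$d$ part of this algebra isomorphism, sorted by multidegree. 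The map realizing this isomorphism on the relevant graded piece is precisely $'\nabla^\nu$ by the explicit description of \eqref{eq:expon} as restricted multiplication. This simultaneously gives (1) and (2) with no basis bookkeeping.

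For (3), I would argue as follows. The inclusion $\supseteq$ is immediate: for any $\mu\trianglerighteq\nu$ we have $J_\mu\subseteq J_\nu$ (noted before Lemma \ref{lem:image}), and $\operatorname{Im}\nabla^\mu\subseteq J_\mu$ since $\nabla^\mu$ is the restriction of outer multiplication to $\Gamma^{(\mu)}(J_1,\dots,J_r)$, whose image lands in $\langle J_{\tensor\mu}\rangle^{\Si_d}=J_\mu$ (products of elements of $J_{i}$ in the appropriate slots, symmetrized). For $\subseteq$, use $J_i = \bigoplus_{k\le i} J'_k$ together with part (1): expanding, $\operatorname{Im}\nabla^\mu = \sum J'_{k_1}\ast\cdots$ summed over all block-choices refining $\mu$, which lands in $\sum_{\rho\trianglerighteq\mu} J'_\rho$; and conversely each $J'_\rho$ with $\rho\trianglerighteq\nu$ equals $\operatorname{Im}\,'\nabla^\rho\subseteq\operatorname{Im}\nabla^\rho$. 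Combining with \eqref{eq:decomp_nu}, $J_\nu=\bigoplus_{\rho\trianglerighteq\nu}J'_\rho=\sum_{\rho\trianglerighteq\nu}\operatorname{Im}\,'\nabla^\rho\subseteq\sum_{\mu\trianglerighteq\nu}\operatorname{Im}\nabla^\mu\subseteq\sum_{\mu\trianglerighteq\nu}J_\mu= J_\nu$, forcing equality.

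\textbf{Main obstacle.} The one delicate point is keeping the correspondence between ``multidegree in the $\bigotimes_j\Gamma(J'_j)$ picture'' and ``the $\Si_d$-orbit type in the $\langle J'_{\tensor\mu}\rangle$ picture'' honest — i.e., checking that outer multiplication really does implement the iterated isomorphism \eqref{eq:expon} on the nose and that \eqref{eq:decomp2} is its degree-$d$ graded piece, rather than merely matching ranks. Once that identification is in hand (it is essentially the associativity and explicit form of \eqref{eq:expon}, already recorded in the excerpt), parts (1), (2) are immediate and (3) is a short lattice-of-submodules computation using \eqref{eq:decomp_nu}.
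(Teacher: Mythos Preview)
Your proposal is correct and follows essentially the same route as the paper: for (1) and (2) you iterate the exponential isomorphism \eqref{eq:expon}/\eqref{eq:expon2} for $J=J'_1\oplus\cdots\oplus J'_r$ and compare the resulting multidegree decomposition with \eqref{eq:decomp2}, exactly as the paper does (the paper phrases the matching via the obvious inclusion $\mathrm{Im}\,{}'\nabla^\mu\subset J'_\mu$ together with $\bigoplus_\mu\mathrm{Im}\,{}'\nabla^\mu=\Gamma^d J=\bigoplus_\mu J'_\mu$, which is precisely the ``honesty check'' you flag as the main obstacle). For (3) your chain $J_\nu=\bigoplus_{\rho\trianglerighteq\nu}J'_\rho\subseteq\sum_{\mu\trianglerighteq\nu}\mathrm{Im}\,\nabla^\mu\subseteq\sum_{\mu\trianglerighteq\nu}J_\mu\subseteq J_\nu$ is identical to the paper's.
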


\begin{proof}
For each $\mu \in \L_d(r)$, 
write 
$M_\mu$, $M'_\mu$ 
to denote the images of 
$\Gamma^{(\mu)}(J_1, \dots, J_r)$ and  
$\Gamma^{(\mu)}(J'_1, \dots, J'_r)$,
respectively, 
under the map   
$\nabla^\mu: \Gamma^\mu J \to \Gamma^d J$.
It is then clear from the definitions that 
$M'_\mu \subset J'_\mu$
and similarly 
$M_\mu = 
\rm{Im}\,\nabla^{\mu} 
 \subset J_\mu$, 
for all $\mu$.

It  follows inductively from  the isomorphism 
(\ref{eq:expon2})  that there is a decomposition 
\begin{equation*}
\Gamma^d J\ =\ 
\Gamma^d(J'_1\oplus \dots \oplus J'_r) \ = 
\bigoplus_{\mu\in \L_d(r)} M'_\mu\ \, \subset 
\bigoplus_{\mu\in \L_d(r)} J'_\mu 
\end{equation*}
It thus follows from 
(\ref{eq:decomp2}) 
that $J'_\mu =M'_\mu \cong 
\Gamma^{(\mu)}(J'_1, \dots, J'_r)$ 
which shows (1) and (2). 
Since $J_{\mu} \subset J_{\nu}$ whenever 
$\mu \trianglerighteq \nu$, 
it follows from (\ref{eq:decomp_nu}) that 
\[J_{\nu}\, =\, 
\bigoplus_{\mu\trianglerighteq \nu} M'_\mu \, 
\subset \, 
\sum_{\mu\trianglerighteq \nu} M_\mu 
\, \subset \,  
\sum_{\mu\trianglerighteq \nu} J_\mu 
\, \subset \, J_{ \nu}\] 
showing (3). 
\end{proof}

Recall the lexicographic ordering $\leq$ 
on $\L_d(r)$ from Definition \ref{lex1}, and notice that 
there is a chain 
of $(\Gamma^d A,  \Gamma^d B)$-sub-bimodules 
\[ 0 \, \subset \, \Gamma^d (J_1) \, = \, 
J_{\geq (d,0,\dots, 0)} \, \subset \, 
\cdots \, \subset \,  
J_{\geq \nu} \, \subset \, \cdots \, \subset \,  
J_{\geq (0, \dots, 0, d)} \, = \, \Gamma^d J \]
where 
$J_{\geq \nu} := \sum_{\mu\geq \nu} J_\mu$ 
for each $\nu \in \L_d(r)$. 
Since the lexicographic ordering refines the 
dominance order, it follows from 
(\ref{eq:decomp_nu}) that 
\begin{equation}\label{eq:sum_nu}
J_{\geq\nu} = J_{> \nu} \oplus J'_\nu
\end{equation} 
for all $\nu$. Thus  
\[J_{\geq \nu} = \sum_{\mu \geq \nu} 
\rm{Im}(\nabla^{\mu})\] 
by the preceding lemma.
This  allows us to describe the quotients 
$J_{\geq \nu}/ J_{>\nu}$ 
as follows. 

\begin{proposition}\label{prop:commute}
Let $\nu \in \L_d(r)$.  
Then $\nabla^\nu$ induces an isomorphism 
\[\bar{\nabla}^\nu: 
\Gamma^{(\nu)}(J_1/J_0, \dots, J_r/J_{r-1})
\cong  
J_{\,\geq\nu} / J_{\,>\nu}\]
which yields a commutative square 
of $(\Gamma^d A, \Gamma^d B)$-bimodule homomorphisms  
\begin{equation*}
\begin{tikzcd}[row sep=large]
\Gamma^{(\nu)}(J_1, \dots, J_r)
\arrow[r, " \nabla^\nu"]  
\arrow[d, two heads, "\pi_{\phantom{1}}^\nu"' ] 
& J_{\,\geq\nu}
 \arrow[d, two heads,  "\pi"  ] \\
\Gamma^{(\nu)}(J_1/J_{0}, \dots, J_r/J_{r\-1})
 \arrow[r, " \bar{\nabla}^\nu"]
& J_{\,\geq}(\nu) / J_{\,>}(\nu) 
\end{tikzcd}
\end{equation*}
where $\pi^\nu$ denotes 
the tensor product of functorial maps  
$\Gamma^{\nu_j}(\pi_j)$ 
associated to the projections, 
$\pi_j: J_j \to J_j/J_{j-1}$, 
for $j=1, \dots, r$, 
and where $\pi$ is also projection. 
\end{proposition}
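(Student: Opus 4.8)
The plan is to deduce the statement from Lemma~\ref{lem:image} together with the decomposition \eqref{eq:sum_nu}, by a two-step argument: first produce the induced map $\bar\nabla^\nu$ and the commutative square, then show it is an isomorphism by identifying both source and target with the same free $\k$-module via a filtration-by-dominance argument. First I would note that the square commutes essentially by construction: $\pi^\nu$ kills the submodule $\sum_j \Gamma^{\nu_1}J_1\tensor\cdots\tensor (J_{j-1}\subset J_j)\tensor\cdots\tensor\Gamma^{\nu_r}J_r$, and $\nabla^\nu$ sends this submodule into $\sum_{\mu\triangleright\nu}\mathrm{Im}\,\nabla^\mu \subset J_{>\nu}$ (since outer multiplication of a factor coming from $J_{j-1}$ contributes to $J_\mu$ for some $\mu$ with strictly larger partial sums, using $J_{j-1}=\bigoplus_{i\le j-1}J_i'$ and part~(3) of Lemma~\ref{lem:image}); hence $\pi\circ\nabla^\nu$ factors through $\pi^\nu$, giving $\bar\nabla^\nu$. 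Since all maps in sight are $(\Gamma^dA,\Gamma^dB)$-bimodule homomorphisms — $\nabla^\nu$ by the discussion preceding Lemma~\ref{lem:image}, $\pi^\nu$ by Lemma~\ref{lem:hom} applied to each $\pi_j$, and $\pi$ as a bimodule quotient — so is $\bar\nabla^\nu$.

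For surjectivity, $J_{\geq\nu}=\sum_{\mu\ge\nu}\mathrm{Im}(\nabla^\mu)$ was observed just before the statement; modulo $J_{>\nu}$ only the $\mu=\nu$ term survives, and its image is exactly $\mathrm{Im}\,\bar\nabla^\nu$, so $\bar\nabla^\nu$ is onto. For injectivity I would compare with the split exact sequence \eqref{eq:sum_nu}: combining it with part~(1)--(2) of Lemma~\ref{lem:image} gives $J_{\geq\nu}/J_{>\nu}\cong J'_\nu\cong\Gamma^{(\nu)}(J_1',\dots,J_r')$. On the other hand, applying the decomposition \eqref{eq:expon2} $r$-fold inside each tensor slot, $\Gamma^{(\nu)}(J_1/J_0,\dots,J_r/J_{r-1})$ is a free $\k$-module whose rank can be matched against that of $\Gamma^{(\nu)}(J_1',\dots,J_r')$ using $J_j/J_{j-1}\cong J_j'$ (from \eqref{eq:decomp}). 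So both source and target of $\bar\nabla^\nu$ are free of the same finite rank; a surjection of free modules of equal finite rank over a commutative (noetherian) ring is an isomorphism, which finishes the argument. Alternatively, and perhaps more cleanly, one can exhibit an explicit $\k$-linear splitting by choosing the basis lift $J_j/J_{j-1}\cong J_j'\hookrightarrow J_j$ and checking directly that the composite $\Gamma^{(\nu)}(J_1',\dots,J_r')\hookrightarrow\Gamma^{(\nu)}(J_1,\dots,J_r)\xrightarrow{\nabla^\nu}J_{\geq\nu}\onto J_{\geq\nu}/J_{>\nu}$ agrees with $\bar\nabla^\nu\circ(\text{the lift})$ and is an isomorphism by Lemma~\ref{lem:image}(2) combined with \eqref{eq:sum_nu}.

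The main obstacle is the bookkeeping in the first step: verifying carefully that $\nabla^\nu$ sends the kernel of $\pi^\nu$ into $J_{>\nu}$ rather than merely into $J_{\geq\nu}$. This requires unwinding that a tensor factor lying in $J_{j-1}=\bigoplus_{i<j}J_i'$, after outer multiplication, lands in $\bigoplus_{\mu}J_\mu'$ over $\mu$'s obtained from $\nu$ by moving a box from slot $j$ to some slot $i<j$, each of which is strictly larger than $\nu$ in dominance (hence in the lexicographic order); one then invokes \eqref{eq:decomp_nu} or Lemma~\ref{lem:image}(3) to see this sits in $J_{>\nu}$. Everything else — commutativity of the square, the bimodule-homomorphism claims, and the rank comparison — is routine given the results already established.
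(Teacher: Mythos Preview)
Your proposal is correct and essentially follows the paper's approach. The paper's proof also proceeds by first checking that $\nabla^\nu(\ker\pi^\nu)\subset J_{>\nu}$ and then verifying the isomorphism via the $\k$-linear splitting through $\Gamma^{(\nu)}(J_1',\dots,J_r')$; in fact your ``alternative'' argument at the end is exactly how the paper establishes that $\bar\nabla^\nu$ is an isomorphism (it restricts $\pi^\nu$ and $\pi\circ\nabla^\nu$ to $\Gamma^{(\nu)}(J_1',\dots,J_r')$ and observes both are $\k$-isomorphisms by Lemma~\ref{lem:image}). The one presentational difference is in the kernel step: rather than decomposing via the $J_i'$ and tracking dominance combinatorially as you do, the paper reduces to $r=2$, identifies $\ker\Gamma^{\nu_j}(\pi_j)$ via Lemma~\ref{lem:quotient} as the image of $\nabla^{(1,\nu_j-1)}$, and then uses associativity of the shuffle product to exhibit $\nabla^\nu(K_j)\subset\mathrm{Im}\,\nabla^{\nu(j,1)}$ directly; but this amounts to the same ``moving a box left'' observation you make. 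Your rank-comparison route also works (surjections of free modules of equal finite rank over a commutative ring are isomorphisms), though the splitting argument is cleaner since it avoids any appeal to such a general fact.
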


\begin{proof}
We first verify that 
$\ker \pi^\nu \subset J_{>\nu}$  
in order to show the existence of the map 
$\bar{\nabla}^\nu$ satisfying the above diagram. 
If $1\leq j \leq r$, consider the 
$(\Gamma^d A , \Gamma^d B)$-sub-bimodule 
\[K_j :=\, 
\Gamma^{(\nu^1)}(J_1,\dots, J_r) \tensor 
\ker \Gamma^{\nu_j}(\pi_j) 
\tensor 
\Gamma^{(\nu^2)}(J_1,\dots, J_r)\] 
where 
$\nu^1 = (\nu_1, \dots, \nu_{j-1}, 0, \dots, 0)$ 
and $\nu^2 = (0, \dots, 0, \nu_{j+1}, \dots, \nu_r)$. 
Then 
$\ker \pi^\nu =\sum_{j=1}^r K_j$, 
and we must show that $K_j \subset J_{>\nu}$ 
for all $j$. 
\smallskip

Now $K_j = 0$, if either  $j=1$ or $\nu_j=0$.  
If $K_j\neq 0$ and $1\leq t \leq \nu_j$, 
let $\nu(j,t)\in \L_d(r)$ be defined 
as in (\ref{eq:pm}). 
Since $\nu(j,1) > \nu$, it suffices to show that 
$\nabla^\nu(K_j) \subset 
\rm{Im}\, \nabla^{\nu(j,1)}$ for all such $j$.  
The fact that 
$\nu$ and $\nu(j,1)$ are equal except for 
entries in the $j$-th and  $(j-1)$-st positions 
allows us to simplify to the case $r=2$. 

So we may assume $\nu = (\nu_1, \nu_2)$.  
Then for $j=2$, we have 
 $\nu(2,1) = (\nu_1+1, \nu_2-1)$.
In this case $K_2 = \ker(\pi^\nu)$, 
and it follows by Lemma \ref{lem:quotient} that 
\[ K_2 =\, \Gamma^{\nu_1} J_1 \tensor 
J_{(1,\nu_2-1)} \subset \, 
\Gamma^{(\nu)}(J_1, J_2).\]
Notice by Lemma \ref{lem:image} 
that $J_{(1,\nu_2-1)}$ 
is equal to the image of the map 
\[\nabla^{(1,\nu_2-1)}: 
J_1 \tensor \Gamma^{\nu_2-1} (J_2)
\to \Gamma^{\nu_2}(J_2).\] 
By associativity of multiplication in $\Gamma(J)$, we also have a 
commutative diagram 
\begin{equation*}
\begin{tikzcd}
[column sep=small, row sep=small]
& \Gamma^{((\nu_1, 1, \nu_2\-1))} (J_{1} , J_{1} , J_2)
\arrow[ddl, "\rm{id}\tensor \nabla^{(1,\nu_2-1)}"', shorten >=-0.2em]  
\arrow[ddr, "\nabla \tensor \rm{id}"]  
\\
\\
\Gamma^{(\nu)}(J_1, J_2)\, 
 \arrow[dr, " \nabla^\nu"' , shorten <=-0.5em]  
& 
& 
\ \, \Gamma^{(\nu(2,1))}(J_1, J_2) 
\arrow[dl, " \nabla^{\nu(2,1)}" , shorten >=-0.2em, shorten <=-0.9em ]  \\
& 
J_{\geq\nu}
& 
\end{tikzcd}
\end{equation*}
It follows that 
$\nabla^\nu(K_2) \subset \rm{Im}\, \nabla^{\nu(2,1)}$, 
which shows the existence of 
$\bar{\nabla}^\nu$. 
To complete the proof, 
note that the restriction 
$\pi^\nu |_{\Gamma^{(\nu)}(J'_1, \dots, J'_r)}$ 
is a $\k$-module isomorphism. 
The map $(\pi\circ \nabla^\nu)
|_{\Gamma^{(\nu)}(J'_1, \dots, J'_r)}$ 
is  also a $\k$-module isomorphism by 
Lemma \ref{lem:image}. It follows that 
$\bar{\nabla}^\nu$ is an isomorphism 
by commutativity. 
\end{proof}

\subsection{Multitableaux}
Suppose $\{\B_j\}_{j \in\set{1,r}}$ is a collection of finite 
totally ordered sets, and let 
$\bs \lambda \in \L^{+}_d(\N)^r$ 
be an $r$-multipartition. Elements of the set 
\[ \Tab_{\bs \lambda}(\B_1, \dots, \B_r) :=
\Tab_{\lambda^{(1)}}(\B_1) \times \dots \times \Tab_{\lambda^{(r)}}(\B_r). \]
are called {\em multitableaux of shape $\bs \lambda$} 
(or $\bs \lambda$-{\em multitableaux}). 
\smallskip 

We say that a $\bs \lambda$-multitableau, 
$\mathbf T= ({\tt T}^{(1)}, \dots, {\tt T}^{(r)})$, 
is {\em standard} if 
each component ${\tt T}^{(j)}$ is a standard $\lambda^{(j)}$-tableau. 
The subset of standard ${\bs \lambda}$-multitableaux is denoted 
 \[ \st_{\bs \lambda}(\B_\ast) = \st_{\bs \lambda}(\B_1, \dots, \B_r).\]
If $(n_1, \dots, n_r) \in \N^r$
is the sequence of integers with $n_j := \sharp \B_j$ for all $j$, 
then it follows from \eqref{eq:tab} that 
$\st_{\bs \lambda}(\B_\ast)$ is non-empty 
if and only if 
$\bs \lambda$ belongs to the subset 
${\L}^+_d(n_1, \dots, n_r) \subset \L^+_d(\N)^r$. 
In this case, we write 
$\mathbf T^{\bs \lambda} = \mathbf T^{\bs \lambda}(\B_\ast)$ 
to denote the standard $\bs \lambda$-multitableau 
\begin{equation}\label{multi-tab}
\mathbf T^{\bs \lambda} 
:= ({\tt T}^{\lambda^{(1)}}, \dots,{\tt T}^{\lambda^{(r)}}). 
\end{equation}

Suppose $\nu = (\nu_1, \dots, \nu_r) \in \L_d(r)$.  
There is a corresponding $r$-multipartition 
$(\nu) := ((\nu_1), (\nu_2), \dots, (\nu_r)) \in \L^{+}_d(\N)^r$. 
For any $m\in \N$, let us write 
$(1^m) := (1,\dots, 1) \in \L^+_m(\N)$, 
and set $(1^0) = 0$. 
Then we also have an element 
\[ (\nu)' := ((1^{\nu_1}), 
(1^{\nu_2}), \dots, (1^{\nu_r}))
\in \L^{+}_\nu(\N). \]
Recalling the total order $\preceq$ from  
Definition \ref{lex2}, notice that 
$(\nu)' \preceq \bs \lambda \preceq (\nu)$ 
for all $\bs \lambda \in  \L_\nu^+(\N)$. 
We also write $\bs \lambda^+$ to 
denote the immediate successor of any  
$\bs \lambda \in \L^{+}_d(\N)^r$ 
and set $((d))^+ = \infty$. 

\subsection{Generalized Weyl modules}\label{ss:gen_Weyl}
Given $\bs \lambda \in \L^{+}_d(\N)^r$ and projective modules $M_j \in \P_\k$ for $j\in \set{1,r}$, 
we will use the notation 
\[\Gamma^{\bs \lambda}
(M_\ast) := \bigotimes_{j} 
\Gamma^{\lambda^{(j)}} M_j,
\qquad W_{\bs \lambda}
(M_\ast) := \bigotimes_{j} 
W_{\lambda^{(j)}} M_j\]
in what follows. 
 The outer tensor product $-\boxtimes-$\,, 
 defined in Section \ref{ss:functor},
yields 
 corresponding functors 
$\Gamma^{\bs \lambda}, W_{\bs \lambda}: \P_\k^{\times r} \to \P_\k$  
defined by 
\[\Gamma^{\bs \lambda} := 
\Gamma^{\lambda^{(1)}} \boxtimes \cdots \boxtimes \Gamma^{\lambda^{(r)} }
\quad \text{and} \quad  
W_{\bs \lambda} := 
W_{\lambda^{(1)}} \boxtimes \dots \boxtimes W_{\lambda^{(r)} }.\] 
Since Weyl modules are quotients of divided powers, 
it follows that there is a natural projection 
$\pi: \Gamma^{\bs \lambda} \onto W_{\bs \lambda}$. 
\smallskip

Suppose $V_1, \dots, V_r \in \P_\k$ are free $\k$-modules, 
and suppose 
$\{x^{(j)}_b\}_{b\in \B_j}$ 
is a finite ordered basis of $V_j$ for each $j\in \set{1,r}$.  
Given a  multitableau $\mathbf T
\in \Tab_{\bs \lambda}(\B_1, \dots, \B_r)$, 
there is a corresponding element 
\[ x_{\mathbf T}  := 
\bigotimes_{j}  x^{(j)}_{{\tt T}^{(j)}} 
 \in\,  \Gamma^{\bs \lambda}(V_\ast)\]
whose image in 
$W_{\bs \lambda}(V_\ast)$ 
is denoted $\bar{x}_{\bm T} := \pi(x_{\bm T})$. 
The next result follows easily from Proposition \ref{prop:standard}.

\begin{lemma}
Let  $\bs \lambda \in \L^{+}_{d}(\N)^r$ 
be an  $r$-multipartition, and let 
$V_1, \dots, V_r$ be  free $\k$-modules 
with bases as above. 
The  set of images 
$\{\bar{x}_{\bm T}\, |\ 
\mathbf T\in \st_{\bs \lambda}(\B_1, \dots, \B_r)
\}$ 
forms a basis of the 
free $\k$-module 
$W_{\bs\lambda}( V_\ast )$
parametrized by standard $\bs \lambda$-multitableaux. 
In particular, we have 
$W_{\bs \lambda}(V_\ast) = 0$ unless 
$\bs \lambda \in \L^+_d(\sharp \B_1, \dots, \sharp \B_r)$. 
\end{lemma}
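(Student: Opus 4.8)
The plan is to reduce this statement to the single-partition case, namely Proposition~\ref{prop:standard}, by exploiting the fact that all the constructions in sight are built from outer tensor products $\boxtimes$. Since $W_{\bs\lambda} = W_{\lambda^{(1)}} \boxtimes \cdots \boxtimes W_{\lambda^{(r)}}$ by definition, we have
\[
W_{\bs\lambda}(V_\ast) = W_{\lambda^{(1)}}(V_1) \tensor \cdots \tensor W_{\lambda^{(r)}}(V_r),
\]
so the first step is simply to observe that a tensor product (over $\k$) of finitely many free $\k$-modules is again free, with basis the set of all tensors of basis elements of the factors.

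Next I would apply Proposition~\ref{prop:standard} to each factor: for each $j$, the module $W_{\lambda^{(j)}}(V_j)$ is free with basis $\{\bar{x}_{{\tt T}^{(j)}} = \pi(x_{{\tt T}^{(j)}}) \mid {\tt T}^{(j)} \in \st_{\lambda^{(j)}}(\B_j)\}$. Taking the product of these bases over $j \in \set{1,r}$ gives a basis of $W_{\bs\lambda}(V_\ast)$ consisting of the elements $\bigotimes_j \bar{x}_{{\tt T}^{(j)}}$, indexed by tuples $({\tt T}^{(1)}, \dots, {\tt T}^{(r)})$ with each ${\tt T}^{(j)}$ standard, i.e.\ precisely by $\st_{\bs\lambda}(\B_1, \dots, \B_r)$. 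It then remains to identify $\bigotimes_j \bar{x}_{{\tt T}^{(j)}}$ with $\bar{x}_{\bm T}$: since the projection $\pi: \Gamma^{\bs\lambda} \onto W_{\bs\lambda}$ is the outer tensor product of the individual projections $\pi_j: \Gamma^{\lambda^{(j)}}M_j \onto W_{\lambda^{(j)}}M_j$, we have $\bar{x}_{\bm T} = \pi(x_{\bm T}) = \pi(\bigotimes_j x^{(j)}_{{\tt T}^{(j)}}) = \bigotimes_j \pi_j(x^{(j)}_{{\tt T}^{(j)}}) = \bigotimes_j \bar{x}_{{\tt T}^{(j)}}$, which is the desired identification.

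For the last assertion, recall from the discussion following Proposition~\ref{prop:standard} that $W_\lambda(V) = 0$ unless $l(\lambda) \leq \sharp\B$ (equivalently, $\st_\lambda(\B)$ is nonempty). Hence if $\bs\lambda \notin \L^+_d(\sharp\B_1, \dots, \sharp\B_r)$, then $l(\lambda^{(j)}) > \sharp\B_j$ for some $j$, so the corresponding factor $W_{\lambda^{(j)}}(V_j)$ vanishes, forcing the tensor product $W_{\bs\lambda}(V_\ast)$ to vanish as well; equivalently the index set $\st_{\bs\lambda}(\B_\ast)$ is empty, consistent with the basis description.

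Honestly there is no serious obstacle here: the statement is essentially a formal consequence of Proposition~\ref{prop:standard} together with the observation that $\k$-freeness and chosen bases are preserved under finite tensor products, plus the compatibility of the projection $\pi$ with the $\boxtimes$ decomposition. The only point requiring a word of care is the bookkeeping that $\pi$ factors as $\pi_1 \boxtimes \cdots \boxtimes \pi_r$ so that $x_{\bm T} \mapsto \bar x_{\bm T}$ agrees with the factorwise projections, but this is immediate from the definitions of $\Gamma^{\bs\lambda}$, $W_{\bs\lambda}$, and $x_{\bm T}$ given just above the lemma. So the proof will be short.
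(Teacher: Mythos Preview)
Your proposal is correct and matches the paper's approach: the paper does not give a detailed proof but simply states that the result ``follows easily from Proposition~\ref{prop:standard},'' which is precisely the reduction you carry out via the tensor-product structure of $W_{\bs\lambda}$.
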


Suppose $\nu \in \Lambda_d(r)$ 
and fix some projective modules $M_j, N_j \in \P_\k$ for $j\in \set{1,r}$.
Using notation similar to the above, we write 
\[ \Gamma^{(\nu)}(M_\ast\tensor N_\ast) := 
\bigotimes_{j} 
\Gamma^{\nu_j} (M_j\tensor N_j).\] 
Given $\bs \lambda \in \L^+_\nu(\N)$, 
we then define a map  
\[\psi^{\bs \lambda}: 
\Gamma^{\bs \lambda}(M_\ast) \tensor 
\Gamma^{\bs \lambda}(N_\ast)\to 
\Gamma^{(\nu)}(M_\ast\tensor N_\ast)\]
via the composition 
\begin{align}
\big\{\bigotimes_j 
\Gamma^{\lambda^{(j)}} M_j \big\}
&\tensor 
\big\{
\bigotimes_j  \Gamma^{\lambda^{(j)}} N_j \big\}  \label{eq:new_psi}\\ 
 \cong\  
&
\bigotimes_j \big\{
\Gamma^{\lambda^{(j)}}(M_j) \tensor 
\Gamma^{\lambda^{(j)}}(N_j)\big\} 
\xrightarrow{\, \psi \tensor\dots \tensor \psi\, }
\bigotimes_j 
\Gamma^{\nu_j}(M_j\tensor N_j).  \nonumber
\end{align}

Note that if  $M_j \in \Amod$ and $N_j \in \Bmod$ 
for all $j$, then $\psi^{\bs \lambda}$ 
is a homomorphism 
of $(\Gamma^d A, \Gamma^d B)$-bimodules
by Lemma \ref{psi}.1.

\subsection{Generalized Cauchy filtrations of bimodules}
Fix a chain $(J_j)_{j\in \set{0, r}}$ of $(A, B)$-bimodules. 
For each $j\in \set{1,r}$, suppose there exists an isomorphism
\begin{equation}\label{eq:alpha_isom}
\alpha_j: 
 J_j/ J_{j-1} \equi  U_j \tensor V_j 
\end{equation}
of $(A, B)$-bimodules  
for some $U_j\in \Amod$ and $V_j\in  \Bmod$. 
Assume for all $j$ that 
$U_j$ and $V_j$ 
are free as $\k$-modules,  
with finite ordered bases 
$\{x^{(j)}_b\}_{b\in \B_j}$ and 
$\{y^{(j)}_c\}_{c\in \C_j}$, respectively. 
Assume further that  $\{J'_j\}_{j\in [r]}$ is any collection of free $\k$-submodules of $J_r$ 
such that (\ref{eq:decomp}) holds.  
\smallskip

We first define a filtration of 
$\Gamma^{(\nu)}(U_{\ast}\tensor V_{\ast})$    
for some fixed weight $\nu \in \L_d(r)$. 
For each $r$-multipartition 
$\bs \lambda \in \L^{+}_\nu(\N)^r$, 
let us write 
\[\F_{\bs \lambda, (\nu)} := 
\sum_{\bs \lambda \leq \bs \mu \leq (\nu)} 
\F_{\mu^{(1)}}(U_1, V_1) \tensor \cdots \tensor \F_{\mu^{(r)}}(U_r, V_r)\]
which is a sum of sub-bimodules 
of $\Gamma^{(\nu)}(U_{\ast}\tensor V_{\ast})$. 
It follows that there is a chain 
of sub-bimodules: 
\begin{equation}\label{eq:chain}
0 =: \, 
\F_{(\nu)^+,(\nu)} \subset\,  
\F_{\bs (\nu), (\nu)} \subset
\dots
\subset\, 
\F_{(\nu)',(\nu)} =\, \Gamma^{(\nu)}(U_\ast \tensor V_\ast).
\end{equation}
\vspace{-9pt} 

Recalling  \eqref{eq:new_psi}, notice that 
for each  $\bs \lambda \in \L^{+}_\nu(\N)^r$ 
we have 
\[\F_{\bs \lambda, (\nu)} = 
\sum_{\bs \lambda \preceq \bs \mu \preceq (\nu)} 
\rm{Im}(\psi^{\bs \mu}).\] 
Note also that $\F_{\bs \lambda, (\nu)}$
contains the $\k$-submodule 
\[ \F'_{\bs \lambda} 
:= \bigotimes \F'_{\lambda^{(j)}}(U_j, V_j). \] 
It then follows by Corollary \ref{cor:HK} that 
$\F'_{\bs \lambda}$ is a free $\k$-submodule, 
with the set 
\begin{equation}\label{eq:basis}
\{ \psi^{\bs \lambda}(x_{\bm S} \tensor y_{\bm T}) \, |\ 
\mathbf S \in \st_{\bs \lambda}(\B_1, \dots, \B_r),\, 
\mathbf T \in \st_{\bs \lambda}
(\C_1, \dots, \C_r)
\}
\end{equation}
as a basis.

\begin{proposition}\label{prop:gen_HK}
Suppose $\nu \in \L_d(r)$.  
Then for each 
$\bs \lambda \in \L^+_\nu(\N)$,
the map 
\[\psi^{\bs \lambda}: 
\Gamma^{\bs \lambda}(U_\ast) \tensor \Gamma^{\bs \lambda}(V_\ast)
\to \F_{\bs \lambda, (\nu)}\]
induces an isomorphism 
\[
\bar{\psi}^{\bs \lambda}:\, 
\F_{\bs \lambda,(\nu)} / \F_{\bs \lambda^+,(\nu)} 
\, \isoto\, 
 W_{\bs \lambda} (U_\ast) \tensor W_{\bs \lambda} (V_\ast)
\]
of bimodules. We also have decompositions  
\begin{equation}\label{eq:decomp_HK}
\Gamma^{(\nu)}(U_\ast \tensor V_\ast) = 
\bigoplus_{\bs \lambda \in \L^+_\nu(\N)} \F'_{\bs \lambda}, \qquad 
\F_{\bs \lambda, (\nu)} = 
\bigoplus_{\bs \lambda \preceq \bs \mu \preceq (\nu)} \F'_{\bs \mu} 
\end{equation}
into free $\k$-submodules.  
\end{proposition}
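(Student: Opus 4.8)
The plan is to reduce Proposition \ref{prop:gen_HK} to the single-factor Cauchy decomposition (Theorem \ref{thm:HK} and Corollary \ref{cor:HK}) applied componentwise, using the fact that $\Gamma^{(\nu)}(U_\ast\tensor V_\ast)$ is by definition an $r$-fold tensor product $\bigotimes_j \Gamma^{\nu_j}(U_j\tensor V_j)$, and that each tensor factor carries its own Cauchy filtration indexed by $\L^+_{\nu_j}(\N)$. First I would observe that $\psi^{\bs\lambda}$ factors, after permuting tensor factors, as $\bigotimes_j \psi^{\lambda^{(j)}}(U_j,V_j)$; this is immediate from the definition \eqref{eq:new_psi}. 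Hence $\rm{Im}(\psi^{\bs\lambda}) = \bigotimes_j \rm{Im}(\psi^{\lambda^{(j)}}) = \bigotimes_j \F_{\lambda^{(j)}}(U_j,V_j)$ up to the reordering isomorphism, but more to the point, summing over $\bs\lambda\preceq\bs\mu\preceq(\nu)$ gives exactly $\F_{\bs\lambda,(\nu)}$ as defined.

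Next I would establish the two decompositions in \eqref{eq:decomp_HK}. By Corollary \ref{cor:HK}, for each $j$ we have $\Gamma^{\nu_j}(U_j\tensor V_j) = \bigoplus_{\mu^{(j)}\in\L^+_{\nu_j}(\N)} \F'_{\mu^{(j)}}(U_j,V_j)$ with $\F_{\mu^{(j)}}(U_j,V_j) = \bigoplus_{\rho\geq\mu^{(j)}}\F'_\rho(U_j,V_j)$. Tensoring these $r$ decompositions together and distributing the tensor product over the direct sums yields $\Gamma^{(\nu)}(U_\ast\tensor V_\ast) = \bigoplus_{\bs\mu}\bigotimes_j\F'_{\mu^{(j)}}(U_j,V_j) = \bigoplus_{\bs\mu}\F'_{\bs\mu}$, where $\bs\mu$ ranges over $\L^+_\nu(\N)$ (since each $\mu^{(j)}$ independently ranges over $\L^+_{\nu_j}(\N)$, and $\F'_{\mu^{(j)}}(U_j,V_j)=0$ whenever $l(\mu^{(j)})>\sharp\B_j$, consistent with the conventions). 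For the second decomposition, I would intersect the filtration level $\F_{\bs\lambda,(\nu)}=\sum_j\F_{\lambda^{(j)}}$ (again up to reordering) with this direct sum: since $\F_{\lambda^{(j)}}(U_j,V_j)$ is a direct sum of some of the $\F'_\rho$'s, the tensor product $\bigotimes_j\F_{\lambda^{(j)}}$ over the sublattice $\{\bs\mu : \bs\lambda\preceq\bs\mu\preceq(\nu)\}$ splits as $\bigoplus_{\bs\lambda\preceq\bs\mu\preceq(\nu)}\F'_{\bs\mu}$. Here I must be careful that the total order $\preceq$ on $r$-multipartitions of weight $\nu$ (Definition \ref{lex2}) is the relevant one, so that $\F_{\bs\lambda,(\nu)}/\F_{\bs\lambda^+,(\nu)}$ picks out precisely the summand $\F'_{\bs\lambda}$; this follows because $\preceq$ is lexicographic in the components and so is compatible with the filtrations in each factor.

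Finally, for the isomorphism $\bar\psi^{\bs\lambda}$, I would combine: (a) $\F_{\bs\lambda,(\nu)} = \F_{\bs\lambda^+,(\nu)}\oplus\F'_{\bs\lambda}$ from the decomposition just obtained, so $\F_{\bs\lambda,(\nu)}/\F_{\bs\lambda^+,(\nu)}\cong\F'_{\bs\lambda}$; and (b) the basis \eqref{eq:basis} of $\F'_{\bs\lambda}$, which is the image under $\psi^{\bs\lambda}$ of the tensor products $x_{\bm S}\tensor y_{\bm T}$ for standard multitableaux, matching precisely the basis $\bar x_{\bm S}\tensor\bar y_{\bm T}$ of $W_{\bs\lambda}(U_\ast)\tensor W_{\bs\lambda}(V_\ast)$ described in the lemma of Section \ref{ss:gen_Weyl}. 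So the map $\psi^{\bs\lambda}$ composed with the quotient $\F_{\bs\lambda,(\nu)}\onto\F_{\bs\lambda,(\nu)}/\F_{\bs\lambda^+,(\nu)}$ kills $\square$-submodules in each factor (by Lemma \ref{lem:HK}, exactly as in the proof of Theorem \ref{thm:HK}), hence descends to $W_{\bs\lambda}(U_\ast)\tensor W_{\bs\lambda}(V_\ast)$, and is a bijection on bases, hence an isomorphism; it is a bimodule map by the remark after \eqref{eq:new_psi} (using Lemma \ref{psi}.1).

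The main obstacle I anticipate is purely bookkeeping: verifying that the multi-index total order $\preceq$ on $\L^+_\nu(\N)$ interacts correctly with the $r$-fold tensor product of single-factor Cauchy filtrations, i.e.\ that $\F_{\bs\lambda,(\nu)}$ as defined via the sum over $\bs\mu$ really equals the "product filtration" and that consecutive quotients isolate single summands $\F'_{\bs\lambda}$. This is the standard fact that a lexicographic refinement of a product of filtered modules has associated graded equal to the tensor product of the associated gradeds, but it requires stating the ordering conventions precisely and checking the edge cases where some $\nu_j=0$ or where $\F'_{\lambda^{(j)}}$ vanishes; everything else is a direct transcription of the rank-one arguments in Theorem \ref{thm:HK} and Corollary \ref{cor:HK}.
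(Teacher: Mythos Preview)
Your proposal is correct, and in fact the paper gives no proof of this proposition at all: it is stated and immediately used, the intended justification being exactly the componentwise reduction to Theorem~\ref{thm:HK} and Corollary~\ref{cor:HK} that you outline. Your argument---factoring $\psi^{\bs\lambda}$ as $\bigotimes_j\psi^{\lambda^{(j)}}$ via \eqref{eq:new_psi}, tensoring the single-factor decompositions of Corollary~\ref{cor:HK} to obtain \eqref{eq:decomp_HK}, and checking that the lexicographic order $\preceq$ on $\L^+_\nu(\N)$ is compatible with the product filtration so that $\F_{\bs\lambda,(\nu)}/\F_{\bs\lambda^+,(\nu)}\cong\F'_{\bs\lambda}$---supplies precisely the implicit argument. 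One small notational slip: the phrase ``$\F_{\bs\lambda,(\nu)}=\sum_j\F_{\lambda^{(j)}}$'' in your second paragraph is not what you mean (the sum is over $\bs\mu\succeq\bs\lambda$ of the tensor products $\bigotimes_j\F_{\mu^{(j)}}$), but you recover the correct statement immediately afterward.
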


We now wish to  lift the filtrations 
\eqref{eq:chain},
for varying $\nu$, 
to a single filtration of $\Gamma^d J$, 
with $J=J_r$ as above.   
First note that there is an isomorphism 
\begin{equation*}
\phi_\nu: J_{\geq \nu}/ J_{>\nu} \isoto \Gamma^{(\nu)}(U_\ast \tensor V_{\ast})
\end{equation*}
satisfying the following commutative triangle of
$(\Gamma^dA, \Gamma^d B)$-bimodule isomorphisms: 
\begin{equation}\label{eq:isom_phi}
\begin{tikzcd}[row sep=large, column sep=small]
\bigotimes \Gamma^{\nu_j}(J_j/J_{j-1})
 \arrow[rr, " \Gamma^{(\nu)}(\alpha_\ast)"]   \arrow[dr, " \bar{\nabla}^\nu"']
&  & 
\bigotimes \Gamma^{\nu_j}\big(U_j\tensor V_j \big)
\\
& J_{\,\geq\nu} / J_{\,>\nu}
\arrow[ur, " \phi_\nu"']
\end{tikzcd}
\end{equation}
where 
$\Gamma^{(\nu)}(\alpha_\ast) = \bigotimes \Gamma^{\nu_j}(\alpha_j)$ 
is a tensor product of isomorphisms induced by the maps (\ref{eq:alpha_isom}) 
and $\bar{\nabla}^\nu$ is defined in Proposition \ref{prop:commute}.
We then have a surjective map 
\[\hat{\phi}_\nu: J_{\geq \nu}\  \onto\ 
\Gamma^{(\nu)}(U_\ast \tensor V_\ast)\]
obtained by composing $\phi_\nu$ with the projection 
$\pi: J_{\geq \nu} \onto J_{\geq \nu}/ J_{>\nu}$.

\begin{definition} 
Suppose $\bs \lambda \in \L^+_d(\N)^r$ and set 
$\nu = |\bs \lambda|$. 
Then  define $\J_{\bs \lambda}$ to be the sub-bimodule 
of $J_{\geq \nu}$, 
corresponding to the inverse image of 
$\F_{\bs \lambda, (\nu)}$ 
under the map $\phi_\nu$ considered above. 
The {\em generalized Cauchy filtration} of $\Gamma^d J$ 
is then defined as the chain 
\begin{equation} \label{eq:gen_Cauchy}
0 = \J_{\infty} \subset \J_{((d))} \subset \dots
\subset 
\J_{\bs \lambda^+} \subset \J_{\bs \lambda} \subset \dots
\subset
\J_{((1^d))} = \Gamma^d J
\end{equation}
of $(\Gamma^d A, \Gamma^d B)$-bimodules 
parametrized by multipartitions 
$\bs \lambda \in \L^+_d(\N)^r$.
\end{definition}

We next define a decomposition of $\Gamma^d J$
via certain $\k$-submodules, 
$\J'_{\bs \lambda} \subset \J_{\bs \lambda}$. 
Recall from (\ref{eq:decomp}) 
 that 
 $J_j = J'_j \oplus J_{j-1}$, for all $j$.
 For each $j\in \set{1,r}$, let 
\[\alpha'_j: J'_j \xrightarrow{\sim} U_j\tensor V_j\] 
denote the isomorphism defined via the  composition 
\[\begin{tikzcd}[column sep=small]
J'_j \arrow[r,hook] & J_j \arrow[r, two heads] & J_j / J_{j-1}
\arrow[rr, "\alpha_j"] & &
U_j \tensor V_j .
\end{tikzcd}\]
Similar to (\ref{eq:isom_phi}),
there is a resulting $\k$-module isomorphism 
\[\phi'_\nu: J'_\nu\, \xrightarrow{\sim}\, \Gamma^{(\nu)}(U_\ast \tensor V_\ast) \]
satisfying the following commutative triangle of isomorphisms: 
\begin{equation}\label{eq:isom_phi'}
\begin{tikzcd}[row sep=large, column sep=small]
\Gamma^{(\nu)}(J'_\ast)
 \arrow[rr, " \Gamma^{(\nu)}(\alpha'_\ast)"]   
 \arrow[dr, " {}'\nabla^\nu"']
&  & 
\Gamma^{(\nu)}(U_\ast \tensor V_\ast )
\\
& 
J'_{\nu} 
\arrow[ur, " \phi'_\nu"']
\end{tikzcd}
\end{equation}
where 
\[\Gamma^{(\nu)}(\alpha'_\ast) := 
\bigotimes \Gamma^{\nu_j}(\alpha'_j)\] 
and where  
${}'\nabla^\nu$ is restriction of $r$-fold multiplication 
as in Lemma \ref{lem:image}.(i). 
We write  
\[\J'_{\bs \lambda}\, := \,
(\phi'_\nu)^{-1}(\F'_{\bs \lambda})\]
to denote the inverse image 
of $\F'_{\bs \lambda}$ under $\phi'_\nu$.

\begin{lemma} \label{lem:gen_decomp}
There exist  decompositions into free $\k$-submodules
\[\Gamma^d J = 
\bigoplus_{\bs \lambda \in \L^+_d(\N)^r } 
\J'_{\bs \lambda},
\qquad \text{and} \qquad 
\J_{\bs \lambda} = 
\bigoplus_{\bs \lambda \preceq \bs \mu \prec \infty} 
\J'_{\bs \mu}
\quad
\text{for each } \bs \lambda. 
\]
\end{lemma}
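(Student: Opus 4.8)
The plan is to combine the decomposition \eqref{eq:decomp2} of $\Gamma^d J$ over weights $\nu \in \L_d(r)$ with the decompositions \eqref{eq:decomp_HK} furnished by Proposition \ref{prop:gen_HK}, transporting the latter back to $\Gamma^d J$ along the isomorphisms $\phi'_\nu$. First I would recall from \eqref{eq:decomp2} that $\Gamma^d J = \bigoplus_{\nu \in \L_d(r)} J'_\nu$ into free $\k$-submodules. For each fixed $\nu$, the map $\phi'_\nu$ of \eqref{eq:isom_phi'} is a $\k$-module isomorphism $J'_\nu \isoto \Gamma^{(\nu)}(U_\ast \tensor V_\ast)$, so applying $(\phi'_\nu)^{-1}$ to the first decomposition in \eqref{eq:decomp_HK} yields $J'_\nu = \bigoplus_{\bs \lambda \in \L^+_\nu(\N)} \J'_{\bs \lambda}$ into free $\k$-submodules, where $\J'_{\bs \lambda} = (\phi'_\nu)^{-1}(\F'_{\bs \lambda})$ by definition. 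Summing over all $\nu \in \L_d(r)$ and using that $\L^+_d(\N)^r = \bigsqcup_{\nu \in \L_d(r)} \L^+_\nu(\N)$ gives the first claimed decomposition $\Gamma^d J = \bigoplus_{\bs \lambda \in \L^+_d(\N)^r} \J'_{\bs \lambda}$.

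For the second decomposition, I would fix $\bs \lambda$, set $\nu = |\bs \lambda|$, and argue in two stages according to whether a multipartition $\bs \mu$ with $\bs \lambda \preceq \bs \mu \prec \infty$ has $|\bs \mu| = \nu$ or $|\bs \mu| > \nu$ in the lexicographic order on $\L_d(r)$. By Definition \ref{lex2}, $\bs \lambda \preceq \bs \mu$ with $\bs \mu \neq \bs \lambda$ forces either $|\bs \mu| = \nu$ and $\bs \lambda \preceq \bs \mu \preceq (\nu)$, or $|\bs \mu| > \nu$ (in which case $\bs \mu$ ranges over all of $\L^+_{\nu'}(\N)$ for each $\nu' > \nu$). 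Now $\J_{\bs \lambda}$ is by definition $(\phi_\nu)^{-1}(\F_{\bs \lambda,(\nu)})$ sitting inside $J_{\geq \nu} = J_{>\nu} \oplus J'_\nu$ via \eqref{eq:sum_nu}; the second decomposition in \eqref{eq:decomp_HK} combined with compatibility of $\phi_\nu$ and $\phi'_\nu$ (both are induced by the same data $\alpha_\ast$, the difference being that $\phi'_\nu$ uses the splitting $J'_\nu \hookrightarrow J_{\geq\nu} \onto J_{\geq\nu}/J_{>\nu}$) shows that $\J_{\bs \lambda} = J_{>\nu} \oplus \bigoplus_{\bs \lambda \preceq \bs \mu \preceq (\nu)} \J'_{\bs \mu}$. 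Finally, using the first decomposition of this lemma restricted to $J_{>\nu} = \bigoplus_{\nu' > \nu} J'_{\nu'} = \bigoplus_{|\bs \mu| > \nu} \J'_{\bs \mu}$ and combining the two ranges gives $\J_{\bs \lambda} = \bigoplus_{\bs \lambda \preceq \bs \mu \prec \infty} \J'_{\bs \mu}$.

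The main obstacle I anticipate is verifying carefully that $\phi_\nu$ restricts correctly on the summand $J'_\nu \subset J_{\geq\nu}$ and that this restriction agrees with $\phi'_\nu$, so that the inverse image of $\F_{\bs \lambda,(\nu)}$ under $\phi_\nu$ decomposes compatibly with the inverse images of the $\F'_{\bs \mu}$ under $\phi'_\nu$. This is where I would use the commutative triangles \eqref{eq:isom_phi} and \eqref{eq:isom_phi'} together with Lemma \ref{lem:image}: the point is that $\bar{\nabla}^\nu \circ \pi^\nu$ and ${}'\nabla^\nu$ are compatible via the projection $\pi: J_{\geq\nu} \onto J_{\geq\nu}/J_{>\nu}$ and the inclusion $J'_\nu \hookrightarrow J_{\geq\nu}$, which are mutually inverse isomorphisms onto $J_{\geq\nu}/J_{>\nu}$ by \eqref{eq:sum_nu}. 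Once this compatibility is pinned down, the freeness assertions are immediate since each $\F'_{\bs \lambda}$ is free by Proposition \ref{prop:gen_HK} and $\phi'_\nu$ is a $\k$-module isomorphism, and the decomposition statements follow by the bookkeeping over $\nu$ sketched above.
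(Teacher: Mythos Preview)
Your proposal is correct and follows essentially the same approach as the paper: both arguments hinge on the compatibility between $\phi'_\nu$ and $\hat\phi_\nu$ (the paper records this as the commutative square \eqref{eq:gen_decomp} and its consequence \eqref{eq:sum_phi}), then combine \eqref{eq:sum_nu} with \eqref{eq:decomp_HK} to obtain $\J_{\bs\lambda} = J_{>\nu} \oplus \bigoplus_{\bs\lambda \preceq \bs\mu \preceq (\nu)} \J'_{\bs\mu}$. The only cosmetic difference is ordering: the paper establishes the decomposition of $\J_{\bs\lambda}$ by induction on the total order (using $J_{>\nu} = \J_{(\nu_+)'}$) and obtains the decomposition of $\Gamma^d J$ as the special case of the minimal $\bs\lambda$, whereas you prove the $\Gamma^d J$ decomposition first via \eqref{eq:decomp2} and then feed it back in to handle $J_{>\nu}$.
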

 
\begin{proof}
It follows by definition from 
 (\ref{eq:isom_phi}) and (\ref{eq:isom_phi'}) 
that $\phi'_\nu$ can be obtained from $\hat{\phi}$ by restriction.  
In particular, we have a commutative diagram: 
\begin{equation}\label{eq:gen_decomp}
\begin{tikzcd}[column sep = large, ]
J'_\nu \arrow[d, shorten <=1pt, shorten >=1, tail] 
\arrow[r, "\phi'_\nu"]
&\Gamma^{(\nu)}(U_\ast \tensor V_\ast)
\arrow[d, no head, shift right=7.2,   shorten >=1]
\arrow[d, no head, shift right=8, shorten >=1]
\\
J_{\geq \nu} 
 \arrow[r, two heads, "\hat{\phi}_\nu"]
&\Gamma^{(\nu)}(U_\ast \tensor V_\ast)
\end{tikzcd}
\end{equation}
Since $J_{\geq \nu} =  J_{>\nu}\oplus J'_\nu$ 
by (\ref{eq:sum_nu}), 
we further have a decomposition  
\begin{equation}\label{eq:sum_phi}
\hat{\phi}_\nu^{-1}(N) \, = \, 
J_{>\nu}\oplus (\phi'_\nu)^{-1}(N) 
\end{equation}
for any $\k$-submodule 
$N \subset \Gamma^{(\nu)}(U_\ast \tensor V_{\ast})$. 
If we set $N= \F_{\bs \lambda, (\nu)}$ in the above, 
then it follows from  (\ref{eq:decomp_HK}) that 
\begin{equation*}
\J_{\bs \lambda}  \, = \, 
J_{>\nu}\oplus
\bigoplus_{\bs \lambda \preceq \bs\mu \preceq (\nu)}
\J'_{\bs \lambda}  
\end{equation*}
for each $\bs \lambda \in  \L_\nu(\N)$.  
The decomposition of $\J_{\bs \lambda}$ 
now follows by induction 
since $J_{> \nu} = \J_{(\nu_+)}$, 
where $\nu_+$ denotes an immediate successor of $\nu$ in 
the lexicographic order on $\L_d(r)$. 
The decomposition for $\Gamma^d J = \J_{(1^d)}$ 
follows as a special case. 
\end{proof}

Now suppose 
$\bs \lambda \in \L^+_d(\N)^r$. 
To each element of the 
basis (\ref{eq:basis}), we associate a 
corresponding element in 
$\J'_{\bs \lambda}$, defined by
\begin{equation}\label{z_basis} 
z_{\bm S, \bm T} := \, 
\big({}'\nabla^\nu \circ \Gamma^{(\nu)}(\alpha'_\ast)^{-1} \circ \psi^{\bs \lambda}\big) 
(x_{\bm S} \tensor y_{\bm T}). 
\end{equation} 
Since the map appearing in \eqref{z_basis} is 
a composition of isomorphisms, 
it follows that the set 
\[
\{ z_{\bm S, \bm T}  \mid 
\bm S \in \st_{\bs \lambda}(\B_\ast),\, 
\bm T \in \st_{\bs \lambda}(\C_\ast) \}
\] 
forms a basis of $\J'_{\bs \lambda}$.
\smallskip

Let $(m_1,\dots, m_r) \in \N^r$ be the sequence 
defined by 
\[m_j := \min(\sharp \B_j, \sharp \C_j)\]
for all $j$, and set 
\[\bs \L: = \L_r^+(m_1, \dots, m_r).\] 

\begin{remark}\label{rmk:multi_tab}
Suppose 
$\bs \lambda \in \L^+(\N)^r$. 
If $\bs \lambda$ belongs to $\bs \L\subset \L^+(\N)^r$, 
then $\st(\B_\ast)$ and $\st(\C_\ast)$ 
are both non-empty since they contain 
the elements 
${\tt T}^{\bs \lambda} = {\tt T}^{\bs \lambda}(\B_\ast)$ 
and
${\tt T}^{\bs \lambda} = {\tt T}^{\bs \lambda}(\C_\ast)$ 
defined in \eqref{multi-tab}, respectively. 
We thus have  
$\J'_{\bs \lambda} \neq 0$ if and only if 
$\bs \lambda \in \bs \L$. 
\end{remark}

Let $\bs \lambda \in \bs \L$.  Since 
$\J_{\bs \lambda} = \J'_{\bs \lambda} \oplus \J_{\bs \lambda^+}$ 
by Lemma \ref{lem:gen_decomp}, 
it follows that 
$\J_{\bs \lambda} / \J_{\bs \lambda^+}$ 
is a free $\k$-module with basis
\[ \left\{\bar{z}_{\bm S, \bm T} \mid 
\bm S \in \st_{\bs \lambda}(\B_\ast),\,  
\bm T \in \st_{\bs \lambda}(\C_\ast) 
\right\} \]
where $\bar{x} :=
x + \J_{\bs \lambda^+}$
denotes the image 
of $x\in \J_{\bs \lambda}$
in the quotient.  

\begin{definition}
Given $\bs \lambda \in \bs \L$, 
define a pair of $\k$-submodules 
\[\U_{\bs \lambda},\  
\V_{\bs \lambda} \, \subset\ 
\J_{\bs \lambda} / \J_{\bs \lambda^+}\]
 generated by the subsets 
\[ \left\{\bar{z}_{\bm S, \bm T^{\bs \lambda}} \mid 
\bm S \in \st_{\bs \lambda}(\B_\ast) \right\} 
\quad  \text{and} \quad
\left\{\bar{z}_{\bm T^{\bs \lambda}, \bm T} \mid 
\bm T \in \st_{\bs \lambda}(\C_\ast) \right\}, \]
respectively. It is then clear that 
$\U_{\bs \lambda}$ is a $\Gamma^d A$-submodule of 
the $(\Gamma^d A, \Gamma^d B)$-bimodule 
$\J_{\bs \lambda} / \J_{\bs \lambda^+}$, and 
$\V_{\bs \lambda}$ is a $\Gamma^d B$-submodule. 
\end{definition}

The following analogue of 
Theorem \ref{thm:HK}  is the main result in this section. 

\begin{theorem}[Generalized Cauchy Decomposition]
\label{thm:gen_Cauchy}
Suppose $\bs \lambda \in \bs \L$.  
Then the map of $\k$-modules defined by  
\begin{align*}
\alpha_{\bs \lambda}: \, 
\J_{\bs \lambda} / \J_{\bs \lambda^+}& \, \to\,
\U_{\bs \lambda} \tensor \V_{\bs \lambda}: 
\ \  \bar{z}_{\bm S, \bm T}\ \mapsto\  
\bar{z}_{\bm S, \bm T^{\bs \lambda}}
\tensor \bar{z}_{ \bm T^{\bs \lambda}, \bm T},
\end{align*}
for all $(\bm S, \bm T) \in 
\st_{\bs \lambda}(\B_\ast) \times \st_{\bs \lambda}(\C_\ast)$, 
is an isomorphism of 
$(\Gamma^d A, \Gamma^d B)$-bimodules.  
The associated graded module of the generalized 
Cauchy filtration is thus given by 
\[\bigoplus_{\lambda\in \bs \L} 
\U_{\bs \lambda} \tensor \V_{\bs \lambda}.\]
\end{theorem}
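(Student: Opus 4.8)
The plan is to realize $\alpha_{\bs \lambda}$ as a composite of $(\Gamma^d A,\Gamma^d B)$-bimodule isomorphisms passing through the ``external'' tensor product $W_{\bs \lambda}(U_\ast)\tensor W_{\bs \lambda}(V_\ast)$, whose bimodule structure is transparent: the left $\Gamma^d A$-action is supported on the first tensor factor (the factor $W_{\bs \lambda}(V_\ast)$ being a $\Gamma^d B$-module with trivial $\Gamma^d A$-action) and the right $\Gamma^d B$-action on the second. Write $\nu := |\bs \lambda|$. First I would build a bimodule isomorphism
\[\Theta:\ \J_{\bs \lambda}/\J_{\bs \lambda^+}\ \isoto\ W_{\bs \lambda}(U_\ast)\tensor W_{\bs \lambda}(V_\ast),\qquad \bar z_{\bm S,\bm T}\ \longmapsto\ \bar x_{\bm S}\tensor \bar y_{\bm T}.\]
By construction $\J_{\bs \lambda}=\hat{\phi}_\nu^{-1}(\F_{\bs \lambda,(\nu)})$ and $\J_{\bs \lambda^+}=\hat{\phi}_\nu^{-1}(\F_{\bs \lambda^+,(\nu)})$, reading $\F_{\bs \lambda^+,(\nu)}:=0$ in the case $\bs \lambda=(\nu)$, where one has $\J_{\bs \lambda^+}=J_{>\nu}$ as in the proof of Lemma~\ref{lem:gen_decomp}. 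Since $\hat{\phi}_\nu\colon J_{\geq\nu}\onto\Gamma^{(\nu)}(U_\ast\tensor V_\ast)$ is a surjective bimodule map with kernel $J_{>\nu}\subseteq\J_{\bs \lambda^+}\subseteq\J_{\bs \lambda}$, passing to quotients yields a bimodule isomorphism $\bar{\phi}_\nu\colon \J_{\bs \lambda}/\J_{\bs \lambda^+}\isoto \F_{\bs \lambda,(\nu)}/\F_{\bs \lambda^+,(\nu)}$, and composing with the bimodule isomorphism $\bar{\psi}^{\bs \lambda}$ of Proposition~\ref{prop:gen_HK} gives $\Theta:=\bar{\psi}^{\bs \lambda}\circ\bar{\phi}_\nu$. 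To see it acts on the basis as claimed, combine the defining formula \eqref{z_basis} with the triangle \eqref{eq:isom_phi'} and the square \eqref{eq:gen_decomp} (which says $\hat{\phi}_\nu$ restricts to $\phi'_\nu$ on $J'_\nu$): one computes $\hat{\phi}_\nu(z_{\bm S,\bm T})=\psi^{\bs \lambda}(x_{\bm S}\tensor y_{\bm T})$, and then the commutative square relating $\psi^{\bs \lambda}$, the canonical projections and $\bar{\psi}^{\bs \lambda}$ (Proposition~\ref{prop:gen_HK}, the multipartition analogue of Theorem~\ref{thm:HK}) identifies the image of this element with $\bar x_{\bm S}\tensor \bar y_{\bm T}$.

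Next I would locate $\U_{\bs \lambda}$ and $\V_{\bs \lambda}$ inside $W_{\bs \lambda}(U_\ast)\tensor W_{\bs \lambda}(V_\ast)$ via $\Theta$. As $\Theta$ is a left $\Gamma^d A$-module isomorphism, $\Theta(\U_{\bs \lambda})$ is the $\Gamma^d A$-submodule generated by $\{\bar x_{\bm S}\tensor \bar y_{\bm T^{\bs \lambda}}\mid \bm S\in\st_{\bs \lambda}(\B_\ast)\}$, where here $\bm T^{\bs \lambda}=\mathbf T^{\bs \lambda}(\C_\ast)$. Since $\{\bar x_{\bm S}\}_{\bm S}$ is a $\k$-basis of $W_{\bs \lambda}(U_\ast)$, the $\k$-span of these elements is already $W_{\bs \lambda}(U_\ast)\tensor \k\bar y_{\bm T^{\bs \lambda}}$, which is a $\Gamma^d A$-submodule because the left action only touches the first factor; this simultaneously re-proves the assertion (made when $\U_{\bs \lambda}$ was defined) that $\U_{\bs \lambda}$ is a $\Gamma^d A$-submodule. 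Because $\bs \lambda\in\bs \L$, the vector $\bar y_{\bm T^{\bs \lambda}}$ is one of the basis elements of $W_{\bs \lambda}(V_\ast)$, so $\k\bar y_{\bm T^{\bs \lambda}}$ is a free rank-one direct summand and $w\tensor \bar y_{\bm T^{\bs \lambda}}\mapsto w$ is a $\Gamma^d A$-module isomorphism $W_{\bs \lambda}(U_\ast)\tensor \k\bar y_{\bm T^{\bs \lambda}}\isoto W_{\bs \lambda}(U_\ast)$. Hence $\Theta$ restricts to a $\Gamma^d A$-module isomorphism $u_{\bs \lambda}\colon \U_{\bs \lambda}\isoto W_{\bs \lambda}(U_\ast)$ with $\bar z_{\bm S,\bm T^{\bs \lambda}}\mapsto \bar x_{\bm S}$; the mirror argument on the second factor gives a right $\Gamma^d B$-module isomorphism $v_{\bs \lambda}\colon \V_{\bs \lambda}\isoto W_{\bs \lambda}(V_\ast)$ with $\bar z_{\bm T^{\bs \lambda},\bm T}\mapsto \bar y_{\bm T}$ (now $\bm T^{\bs \lambda}=\mathbf T^{\bs \lambda}(\B_\ast)$ in the first slot).

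Finally, $u_{\bs \lambda}\tensor v_{\bs \lambda}\colon \U_{\bs \lambda}\tensor \V_{\bs \lambda}\isoto W_{\bs \lambda}(U_\ast)\tensor W_{\bs \lambda}(V_\ast)$ is a bimodule isomorphism, and on the basis $\{\bar z_{\bm S,\bm T}\}$ of $\J_{\bs \lambda}/\J_{\bs \lambda^+}$ one has
\[(u_{\bs \lambda}\tensor v_{\bs \lambda})^{-1}\circ\Theta\,(\bar z_{\bm S,\bm T})=(u_{\bs \lambda}\tensor v_{\bs \lambda})^{-1}(\bar x_{\bm S}\tensor \bar y_{\bm T})=\bar z_{\bm S,\bm T^{\bs \lambda}}\tensor \bar z_{\bm T^{\bs \lambda},\bm T}=\alpha_{\bs \lambda}(\bar z_{\bm S,\bm T}).\]
By $\k$-linearity, $\alpha_{\bs \lambda}=(u_{\bs \lambda}\tensor v_{\bs \lambda})^{-1}\circ\Theta$ is a composite of $(\Gamma^d A,\Gamma^d B)$-bimodule isomorphisms, which settles the first assertion. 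The statement on the associated graded module is then immediate: the successive quotients of the chain \eqref{eq:gen_Cauchy} are precisely the $\J_{\bs \lambda}/\J_{\bs \lambda^+}$, which vanish unless $\bs \lambda\in\bs \L$ by Remark~\ref{rmk:multi_tab} and are otherwise $\cong \U_{\bs \lambda}\tensor \V_{\bs \lambda}$. I expect the one genuinely delicate point to be the middle step: verifying that the sub-bimodule generated by the $\bar z_{\bm S,\bm T^{\bs \lambda}}$ is no larger than the $\k$-span of $W_{\bs \lambda}(U_\ast)\tensor \k\bar y_{\bm T^{\bs \lambda}}$. This hinges on the $\Gamma^d A$-action being concentrated on a single tensor factor of $W_{\bs \lambda}(U_\ast)\tensor W_{\bs \lambda}(V_\ast)$ together with $\bar y_{\bm T^{\bs \lambda}}$ being a genuine basis vector (exactly the role of the hypothesis $\bs \lambda\in\bs \L$); everything else is bookkeeping with Proposition~\ref{prop:gen_HK}, Lemma~\ref{lem:gen_decomp}, and the diagrams \eqref{eq:isom_phi'}, \eqref{eq:gen_decomp}.
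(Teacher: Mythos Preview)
Your proof is correct and follows essentially the same route as the paper's: the paper builds the bimodule isomorphism $\varphi_{\bs \lambda}=\bar{\phi}_{\bs \lambda}^{-1}\circ\bar{\psi}^{\bs \lambda}$ (your $\Theta^{-1}$), defines $\varphi'_{\bs \lambda},\varphi''_{\bs \lambda}$ by composing with the embeddings $W_{\bs \lambda}(U_\ast)\hookrightarrow W_{\bs \lambda}(U_\ast)\tensor\bar y_{\bm T^{\bs\lambda}}$ and its mirror (the inverses of your $u_{\bs\lambda},v_{\bs\lambda}$), and then verifies by the same basis computation that the lower triangle $\alpha_{\bs\lambda}\circ\varphi_{\bs\lambda}=\varphi'_{\bs\lambda}\tensor\varphi''_{\bs\lambda}$ commutes. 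Your write-up is, if anything, slightly more explicit than the paper's about why the $\Gamma^d A$-action is supported on the first tensor factor of $W_{\bs\lambda}(U_\ast)\tensor W_{\bs\lambda}(V_\ast)$, which is exactly the point needed to know that $\varphi'_{\bs\lambda}$ is a $\Gamma^d A$-map landing in $\U_{\bs\lambda}$.
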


\begin{proof}
Write $\phi_{\bs \lambda}: 
\J_{\bs \lambda} \to \F_{\bs \lambda, (\nu)}$ 
to denote the map obtained from   
$\hat{\phi}_\nu$ by restriction. 
There is an induced bimodule isomorphism 
\[ \bar{\phi}_{\bs \lambda} : 
\J_{\bs \lambda} / \J_{\bs \lambda^+}  \isoto
\F_{\bs \lambda, (\nu)} /\F_{\bs \lambda^+, (\nu)}\]
which follows from the definitions by using  
the decompositions  
$\J_{\bs \lambda}
= \J'_{\bs \lambda} \oplus \J_{\bs \lambda^+}$
and 
$\F_{\bs \lambda, (\nu)} = 
\F'_{\bs \lambda} \oplus \F_{\bs \lambda^+, (\nu)}$.

Hence by Proposition \ref{prop:gen_HK}, 
there is an isomorphism $\varphi_{\bs \lambda}$ 
making the upper right triangle commute in the following 
diagram 
\begin{equation}\label{eq:diagram}
\begin{tikzcd}[column sep= large, row sep=large]
\J_{\bs \lambda}/
\J_{\bs \lambda^+} 
\arrow[d, "\alpha_{\bs \lambda}"' , shift right =.1em]
\arrow[r, "{\bar{\phi}}_{\bs \lambda}"]
&
\F_{\bs \lambda, (\nu)} /
\F_{\bs \lambda^+, (\nu)}
\\
\U_{\bs \lambda} \tensor 
\V_{\bs \lambda}
&
W_{\bs \lambda}(U_\ast) \tensor 
W_{\bs \lambda}(V_\ast).
\arrow[u, "\bar{\psi}^{\bs \lambda}"', shift left = 0.1em]
\arrow[ul, "\varphi_{\bs \lambda}"']
\arrow[l, "\ \varphi'_{\bs \lambda} \tensor \varphi''_{\bs \lambda}"]
\end{tikzcd}
\end{equation}
In the bottom arrow, the map 
$\varphi'_{\bs \lambda}$ 
(resp.~$\varphi''_{\bs \lambda}$)  
denotes the homomorphism obtained by composing  
$\varphi_{\bs \lambda}$ with the embedding  
\[ W_{\bs \lambda} (U_\ast) \isoto  
W_{\bs \lambda} (U_\ast) \tensor \bar{y}_{\bm T^{\bs \lambda}}
\quad (\text{resp.}\ 
W_{\bs \lambda}(V_\ast) \isoto  
\bar{x}_{\bm T^{\bs \lambda}} \tensor W_{\bs \lambda}(V_\ast)).\]

In order to complete the proof, it suffices to show that the lower 
triangle in (\ref{eq:diagram}) is a commutative triangle of isomorphisms. 
For this, we compute: 
\begin{align*}
\varphi_{\bs \lambda}(\bar{x}_{\bm S} \tensor \bar{y}_{\bm T}) \, = 
&\  (\bar{\phi}_{\bs \lambda}^{-1} \circ \bar{\psi}^{\bs \lambda})
(\bar{x}_{\bm S} \tensor \bar{y}_{\bm T}) 
\\[.1cm]
= &\  \bar{\phi}_{\bs \lambda}^{-1} (\, \overline{
\psi^{\bs \lambda}
({x}_{\bm S} \tensor {y}_{\bm T})} 
\, )
 & \text{by Prop.~\ref{prop:gen_HK} }
\\[.1cm]
= &\  \overline{
(\phi'_{\bs \lambda})^{-1} \circ 
\psi^{\bs \lambda}
({x}_{\bm S} \tensor {y}_{\bm T}) }
 & \text{by (\ref{eq:gen_decomp})\, \text{and}\, (\ref{eq:sum_phi})}
\\[.1cm]
= &\ \bar{z}_{\bm S, \bm T}.
\end{align*}
It follows that  
$\varphi'_{\bs \lambda} \tensor \varphi''_{\bs \lambda}$ 
is an isomorphism since 
\[\varphi'_{\bs \lambda} \tensor \varphi''_{\bs \lambda}\, 
(\bar{x}_{\bm S} \tensor \bar{y}_{\bm T}) 
\, =\, 
\bar{z}_{\bm S, \bm T^{\bs \lambda}}
\tensor 
\bar{z}_{\bm T^{\bs \lambda}, \bm T} \]
for all $(\bm S, \bm T) \in 
\st_{\bs \lambda}(\B_\ast)\times \st_{\bs \lambda}(\C_\ast)$. 
Since it is now clear that the lower triangle is commutative, 
the proof is complete. 
\end{proof}

It follows from the proof of the theorem that 
$\U_{\bs \lambda}$ and $\V_{\bs \lambda}$ 
are each isomorphic to a respective (generalized) Weyl module. 
In the case $B= A^\op$, we  call  
$\U_{\bs \lambda}$ (resp.~$\V_{\bs \lambda}$) 
a left (resp.~right) Weyl submodule of 
the $\Gamma^d A$-bimodule 
$\J_{\bs \lambda}/\J_{\bs \lambda^+}$.

\section{Cellular Algebras}
Assume throughout this section 
that $\k$ is a noetherian integral domain.  
We first recall the definition of cellular algebras 
from  \cite{GL}, 
along with the reformulation given in  \cite{KX}. 
We then use the generalized Cauchy decomposition 
to describe a cellular structure on  
generalized Schur algebras $S^A(n,d)$. 
%

\subsection{Definition of cellular algebras}

\begin{definition}[Graham-Lehrer] 
An associative $\k$--algebra $A$ is called 
a {\em cellular algebra} with cell datum   $(I, M, C, \tau)$   
if the following conditions are satisfied:
\begin{itemize}
\item[(C1)] $(I, \trianglerighteq)$ is a finite partially ordered set. 
Associated to each $\lambda \in I$ is a finite set $M(\lambda)$.
The algebra $A$ has a $\k$-basis $C_{S,T}^\lambda$, where $(S,T)$ runs through 
all elements of $M(\lambda)\times M(\lambda)$ for all $\lambda \in I$.
\smallskip
\item[(C2)] 
The map  $\tau$ is an anti-involution of $A$ such that 
$\tau(C^{\lambda}_{S,T}) = C^\lambda_{T,S}$.
\smallskip
\item[(C3)] 
For each $\lambda \in I$ and $S,T \in M(\lambda)$ and each $a\in A$, 
the product $a C^\lambda_{S,T}$ can be written as 
$(\sum_{U\in M(\lambda)} r_a(U,S) C^{\lambda}_{U,T}) + r'$, where $r'$ is a linear 
combination of basis elements with upper index $\mu$ strictly larger than $\lambda$, and 
where the coefficients $r_a(U,S)\in \k$ do not depend on $T$.
\end{itemize}
\end{definition}

Let $A$ be  a cellular algebra with cell datum $(I, M, C,\tau)$. 
Given $\lambda \in I$,   it is clear that the set $J(\lambda)$  
spanned by the $C_{S, T}^{\mu}$ with $\mu  \trianglerighteq \lambda$   
is a $\tau$--invariant two sided ideal of $A$ (see \cite{GL}).
Let $J(\triangleright \lambda)$  denote the sum of ideals $J(\mu)$ with $\mu \triangleright \lambda$.
\smallskip

For $\lambda \in I$,  
the {\em standard module} $\Delta(\lambda)$ 
is defined as follows: as   a $\k$-module, $\Delta(\lambda)$  
is free with basis indexed by 
$M(\lambda)$, say  $\{C_{S}^\lambda\ |\ S \in M(\lambda)\}$;  
for each $a \in A$, 
the action of $a$ on $\Delta(\lambda)$ is defined by  
$ aC_{S}^\lambda=\sum_{ U} r_{ a}(U, S)   C_{U}^\lambda$ 
where the elements $r_{a}(U,S) \in \k$ are the coefficients  in (C3). 
Any left $A$-module isomorphic to $\Delta(\lambda)$ 
for some $\lambda$ will also be called a standard module. 
Note that for any $T \in M(\lambda)$, the assignment 
$C_S^\lambda \mapsto C^\lambda_{S,T} + J(\triangleright \lambda)$
defines an injective $A$--module homomorphism from 
$\Delta(\lambda)$ to $J(\lambda)/J(\triangleright\lambda)$.

\subsection{Basis-free definition of cellular algebras}\label{ss:KX}

In \cite{KX}, K\"onig and Xi provide an equivalent definition of cellular algebras 
which does not require specifying a particular basis.  
This definition can be formulated as follows.

\begin{definition}[K\"onig-Xi]
\label{def:KX} 
Suppose $A$ is a  $\k$-algebra with an anti-involution $\tau$. 
Then a two-sided ideal $J$ in $A$ is called a {\em cell ideal} if, and only if, $J = \tau(J)$ and there exists a left ideal $\Delta \subset J$ such that $\Delta$ is finitely generated and free over $\k$ and such  that there is an isomorphism of $A$-bimodules 
$\alpha: J \xrightarrow{\sim} 
\Delta \tensor \tau(\Delta)$ 
making the following diagram commutative:
\begin{equation*}
\begin{tikzcd}
J \arrow[r, "\alpha"] 
\arrow[d, "\tau\,"' ]  &  
\Delta \tensor \tau(\Delta)  
\arrow[d, "\,x\tensor y\, \mapsto\, \tau(y) \tensor \tau(x)"] 
\\
J \arrow[r, "\alpha"] 
& \Delta \tensor \tau(\Delta)
\end{tikzcd}
\end{equation*}
We say that a decomposition $A=J'_1\oplus \dots \oplus J'_r$ (for some $r$) 
into $\k$-submodules 
with $\tau(J'_j) = J'_j$ for each $j=1, \dots, r$ 
is a {\em cellular decomposition} of $A$ if 
setting $J_j:= \bigoplus_{1\leq i\leq j} J'_i$ gives a chain of 
($\tau$-invariant) two-sided ideals 
\[ 0=J_0 \subset J_1 \subset J_2 \subset \dots \subset J_r = A \] 
such that the quotient $J_j/J_{j-1}$ is a cell ideal 
(with respect to the anti-involution induced 
by $\tau$ on the quotient) of $A/J_{j-1}$.
\end{definition}

The above chain of ideals in $A$ is called a {\em cell chain}. 
For each ideal $J_j$ in a cell chain, 
we write 
\begin{equation}\label{eq:Delta_alpha}
\Delta_j \subset J_j/J_{j-1},\qquad
\alpha_j: J_j/J_{j-1} \equi 
\Delta_j \tensor \tau(\Delta_j) 
\end{equation}  
to denote the corresponding left ideal 
and $A$-bimodule isomorphism. 
Since $J_j = J'_j \oplus J_{j-1}$ for all $j$, 
we have a $\k$-module isomorphism 
$\alpha'_j: J'_j \cong \Delta_j\tensor \tau(\Delta_j)$
defined as the composition 
\begin{equation*}\label{eq:alpha}
\begin{tikzcd}[cramped, column sep=small]
\alpha'_j : J'_j \arrow[r, hook] 
& J_j \arrow[r, two heads]
& J_j/J_{j-1}
\arrow[r, " \alpha_j " ] 
&[3mm] \Delta_j\tensor \tau(\Delta_j).
\end{tikzcd}
\end{equation*}
It then follows by definition that we have a commutative diagram
\begin{equation}\label{eq:alpha2}
\begin{tikzcd}
J'_j
\arrow[r, "\alpha'_j"] 
\arrow[d, "i\,"' ]  
&  
\Delta_j \tensor_\k \tau(\Delta_j) 
\arrow[d, "\,x \tensor y\, \mapsto\, \tau(y) \tensor \tau(x)"] 
\\
J'_j
\arrow[r, "\alpha'_j"] 
& 
\Delta_j \tensor_\k \tau(\Delta_j)  
\end{tikzcd}
\end{equation}
of $\k$-module isomorphisms.

\begin{lemma}[K\"onig-Xi, \cite{KX}]\label{lem:KX}
Let $A$ be an associative $\k$-algebra with an anti-involution $\tau$. 
Then $A$ is a cellular algebra in the sense of \cite{GL} 
if and only if $A$ has a cellular decomposition.
\end{lemma}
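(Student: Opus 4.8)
The plan is to prove Lemma~\ref{lem:KX} by establishing both implications through an explicit translation between a cell datum $(I,M,C,\tau)$ and a cellular decomposition in the sense of Definition~\ref{def:KX}. The forward direction is essentially a bookkeeping exercise: given a cell datum, I would order the elements of $I$ as $\lambda^{(1)} \trianglelefteq \dots \trianglelefteq \lambda^{(r)}$ compatibly with the partial order (so that $\trianglerighteq$-larger cells come later), set $J_j := J(\lambda^{(1)}) \cap \dots$, more precisely take $J_j$ to be the span of those $C^{\mu}_{S,T}$ with $\mu \in \{\lambda^{(1)},\dots,\lambda^{(j)}\}$, and let $J'_j$ be the span of the $C^{\lambda^{(j)}}_{S,T}$ alone. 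Conditions (C1)--(C3) then show that $J_j$ is a $\tau$-invariant two-sided ideal, that $J'_j$ is $\tau$-stable, and that in $A/J_{j-1}$ the image of $J_j/J_{j-1}$ is a cell ideal: the left ideal is $\Delta_j = \Delta(\lambda^{(j)})$ with $\k$-basis the images of $C^{\lambda^{(j)}}_{S,T_0}$ for a fixed $T_0$, and the bimodule isomorphism $\alpha_j$ sends $C^{\lambda^{(j)}}_{S,T} \mapsto C_S^{\lambda^{(j)}} \tensor C_T^{\lambda^{(j)}}$ (identifying $\tau(\Delta_j)$ with $\Delta(\lambda^{(j)})$ via $\tau$). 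Commutativity of the square in Definition~\ref{def:KX} is exactly condition (C2), and the fact that $\alpha_j$ is a \emph{bimodule} map (not just a left-module map) is precisely the content of (C3) together with its $\tau$-twisted counterpart for right multiplication.

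For the reverse direction, suppose $A = J'_1 \oplus \dots \oplus J'_r$ is a cellular decomposition with data $\Delta_j$, $\alpha_j$ as in \eqref{eq:Delta_alpha}. I would define $I := \{1,\dots,r\}$ with the \emph{reverse} of the natural order (so $j \trianglerighteq j'$ iff $j \leq j'$, matching the convention that larger index means deeper in the chain), set $M(j)$ to be an index set for a $\k$-basis of $\Delta_j$, and define $C^j_{S,T} := (\alpha'_j)^{-1}(\delta_S \tensor \tau(\delta_T))$ where $\{\delta_S\}_{S \in M(j)}$ is the chosen basis of $\Delta_j$ (so $\{\tau(\delta_T)\}$ is the corresponding basis of $\tau(\Delta_j)$). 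Since $\alpha'_j$ is a $\k$-module isomorphism $J'_j \cong \Delta_j \tensor \tau(\Delta_j)$ and $A = \bigoplus_j J'_j$, the $C^j_{S,T}$ form a $\k$-basis of $A$, giving (C1). Condition (C2) follows from the commutative square \eqref{eq:alpha2}. For (C3), the key point is that $\alpha_j: J_j/J_{j-1} \to \Delta_j \tensor \tau(\Delta_j)$ is a homomorphism of $A$-bimodules: left multiplication by $a \in A$ on $J_j/J_{j-1}$ corresponds, under $\alpha_j$, to acting by $a$ on the left tensor factor $\Delta_j$ only, so $a \cdot C^j_{S,T} \equiv \sum_U r_a(U,S) C^j_{U,T} \pmod{J_{j-1}}$ where $r_a(U,S)$ is the matrix of the $\Delta_j$-action and manifestly independent of $T$; and $J_{j-1} = J(\triangleright j)$ in the reversed order is spanned by basis elements with strictly larger upper index, which is exactly the error term $r'$ allowed in (C3).

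The main obstacle — and the place I would spend the most care — is verifying that the bimodule condition in Definition~\ref{def:KX} is genuinely equivalent to condition (C3), rather than to some weaker left-module statement. Concretely, one must check that demanding $\alpha_j$ be an \emph{$A$-bimodule} isomorphism compatible with the $\tau$-twist square forces the structure constants for left multiplication to be $T$-independent (and dually that $\alpha_j$ being merely a left $A$-module map, together with $\tau$-equivariance, already implies it is a bimodule map, since the right action is $\tau$-conjugate to the left action on the $\tau$-twisted factor). This is the conceptual heart of König--Xi's reformulation; everything else is careful translation of indices and orders. A secondary subtlety is the order-reversal convention: Graham--Lehrer's (C3) has the error term with index \emph{strictly larger} than $\lambda$, while a cell chain has $J_{j-1}$ consisting of \emph{earlier} pieces, so one must be consistent about whether "larger in $I$" means "earlier in the chain" or "later," and I would fix this once at the outset. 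Since the result is cited from \cite{KX} and used only as a black box in what follows, I would keep the write-up brief, indicating the dictionary between the two formulations and pointing to \cite[Theorem~3.4]{KX} (or the relevant statement therein) for the full details of the bimodule/(C3) equivalence.
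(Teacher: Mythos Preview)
Your proposal is correct and follows essentially the same approach as the paper: both directions proceed by the direct dictionary you describe (enumerate $I$, take $J'_j$ to be the span of the $C^{\lambda_j}_{S,T}$, and for the converse set $C^j_{S,T} := (\alpha'_j)^{-1}(\delta_S \otimes \tau(\delta_T))$ on $I=\{1,\dots,r\}$ with the reversed order $1\triangleright\cdots\triangleright r$). The only slip is in the forward direction, where your stated enumeration has $\trianglerighteq$-larger cells coming \emph{later}; it must be the other way around so that each $J_j=\bigoplus_{i\le j}J'_i$ is an up-set for $\trianglerighteq$ and hence a two-sided ideal by (C3) --- but you flag precisely this order-reversal subtlety at the end, so this is a convention to fix rather than a gap.
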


\begin{proof}
We summarize the proof from \cite{KX}.  
Let $A$ be a cellular algebra with cell datum $(I, M, C, \tau)$. 
First, suppose $\lambda \in I$ is maximal. 
Then $J= J(\lambda)$ is a two-sided ideal by (C3) and $J=\tau(J)$ by (C2). 
Fix any element $T_\lambda \in M(\lambda)$. 
Define $\Delta$ as the $\k$--span of 
$C^\lambda_{S,T_\lambda}$ where $S$ varies.
Defining $\alpha$ by sending 
$C_{S,T_\lambda}^\lambda \tensor \tau(C_{T,T_\lambda}^\lambda)$
to $C_{S,T}^\lambda$ 
gives the required isomorphism.
Thus $J(\lambda)$ is a cell ideal.

Next, choose any enumeration $\lambda_1, \dots, \lambda_r$ of
 the elements of $I$ such that $i<j$ whenever 
 $\lambda_j \triangleright \lambda_i$. 
Set $J'_j\subset A$ (for each $j$) 
equal to the $\k$--span of all $C^{\lambda_j}_{S,T}$ (for varying $S,T$).
We have $\tau(J'_j) = J'_j$ by (C2). 
Since 
$J(\lambda_j) = J'_j \bigoplus J(\triangleright \lambda_j)$ for all $j$,
it follows that  $A= \bigoplus_j J'_j$ is a cellular decomposition.

For the converse, consider the index set 
 $I = \{1, \dots, r\}$ with the reversed 
 ordering 
 $1\triangleright \dots \triangleright r$. 
Choose a $\k$-basis $\{x^{(j)}_b\}_{b\in \B_j}$ 
of $\Delta_j$, for each $j\in I$.  
Setting $C^j_{b,c}\in J'_j$ 
to be the inverse image of 
$x^{(j)}_{b} \tensor \tau(x^{(j)}_{c})$ 
(for $b,c\in \B_j$) 
under  $\alpha'_j$ (for $j\in I$)
gives a $\k$-basis for $A$ of the form (C1). 
Since $\Delta_j$ is a left $A$-module, (C3) is satisfied.  
Finally, (C2) follows from the required commutative diagram
and the $\tau$-invariance of $J'_j$. 
It follows that  $\{C^j_{b,c}\}$ is a cellular basis. 
\end{proof}

From now on, we say that an algebra $A$ with anti-involution $\tau$ is {\em cellular} 
if either of the equivalent statements in Lemma \ref{lem:KX} is satisfied. 
The proof of the lemma shows that each ideal $\Delta_j$ (for $j=1, \dots, r$) 
for a cellular algebra $A$ is a standard module.

\subsection{Matrix algebras}\label{ss:Matrix}
Consider the matrix ring, $\Mat_n(\k)$, 
with matrix transpose, $\tr$, as  anti-involution. 
Let us write, $c: \Vn \tensor \Vn^\tr \isoto \Mat_n(\k)$, 
to denote the isomorphism mapping 
$v_i \tensor v_j^\tr \mapsto E_{ij}$ for all $i,j \in \set{1,n}$.
\smallskip

Now suppose $A$ is an algebra with anti-involution $\tau$, 
and let $J$ be a cell ideal with defining isomorphism 
$\alpha: J \isoto \Delta \tensor \tau(\Delta)$.
Then 
\[\Mat_n(J) := \Mat_n(\k)\tensor J\]
is a cell ideal of the matrix ring $\Mat_n(A)$ 
with respect to the anti-involution $\tr \tensor \tau$. 
The corresponding isomorphism is the map 
\[c^{-1}(\alpha): \Mat_n(J) \isoto  
\Vn(\Delta) \tensor \Vn^\tr(\tau(\Delta))\] 
defined by the composition 
\[\Mat_n(\k) \tensor J \xrightarrow{\ 
c^{-1}\hp{1}\tensor\hp{3} \alpha \ }
\left( \Vn\tensor  \Vn^\tr \right)  \tensor 
\left( \Delta \tensor \tau(\Delta) \right)
 \xrightarrow{\ \sim \ }
  \Vn\tensor \Delta \tensor \Vn^\tr \tensor \tau(\Delta).\]
More generally, we have the following.

\begin{lemma}\label{lem:Mat}
Suppose $A$ is a cellular algebra with anti-involution $\tau$ 
and cell chain $(J_j)_{j\in \set{1,r}}$.
Then the matrix ring $\Mat_n(A)$ is cellular with anti-involution 
$\tr \tensor \tau$ and cell chain
 $(\Mat_n(J_j))_{j\in \set{1,r}}$, 
where $\Mat_n(J_j) := \Mat_n(\k)\tensor J_j$ for all $j$.
\end{lemma}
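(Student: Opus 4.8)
The plan is to verify the K\"onig--Xi conditions of Definition \ref{def:KX} directly: I will show that $(\Mat_n(J_j))_{j\in\set{1,r}}$ is a cell chain for $\Mat_n(A)$ with respect to the anti-involution $\tr\tensor\tau$. The argument has three parts: an elementary part showing the chain consists of $(\tr\tensor\tau)$-invariant two-sided ideals, a reduction to the case of a single cell ideal by passing to the quotients $A/J_{j-1}$, and finally the ``single cell'' computation, which is essentially the one already sketched in the paragraph preceding the statement.

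First I would record the elementary facts. For each $j$, since $J_j$ is a two-sided ideal of $A$ the $\k$-submodule $\Mat_n(J_j)=\Mat_n(\k)\tensor J_j$ is a two-sided ideal of $\Mat_n(A)=\Mat_n(\k)\tensor A$, as products on either side land in $\Mat_n(\k)\tensor AJ_j\subseteq\Mat_n(J_j)$ (and symmetrically on the right); also $\Mat_n(J_0)=0$ and $\Mat_n(J_r)=\Mat_n(A)$. Next, $\tr\tensor\tau$ is an anti-involution of $\Mat_n(A)$: it is the tensor product of the anti-involution $\tr$ of $\Mat_n(\k)$ and $\tau$ of $A$, and such a tensor product is an anti-involution for the twisted multiplication $m_{\Mat_n(\k)\tensor A}$ --- this is a short direct check using the symmetry $\tw$ built into that multiplication. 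Since $(\tr\tensor\tau)(\Mat_n(J_j))=\tr(\Mat_n(\k))\tensor\tau(J_j)=\Mat_n(\k)\tensor J_j$, each term of the chain is $(\tr\tensor\tau)$-invariant.

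Then I would pass to quotients. As $\Mat_n(\k)$ is free over $\k$, there is for each $j$ a canonical algebra isomorphism $\Mat_n(A)/\Mat_n(J_{j-1})\isoto\Mat_n(A/J_{j-1})$ identifying $\Mat_n(J_j)/\Mat_n(J_{j-1})$ with $\Mat_n(\bar J_j)$, where $\bar J_j:=J_j/J_{j-1}$ is a cell ideal of $\bar A:=A/J_{j-1}$ with induced anti-involution $\bar\tau$ and defining isomorphism $\alpha_j$ as in \eqref{eq:Delta_alpha}, and under this identification the induced anti-involution is $\tr\tensor\bar\tau$. So it suffices to prove: if $J$ is a cell ideal of an algebra $A$ with anti-involution $\tau$ and defining isomorphism $\alpha:J\isoto\Delta\tensor\tau(\Delta)$, then $\Mat_n(J)$ is a cell ideal of $\Mat_n(A)$ with respect to $\tr\tensor\tau$. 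For the distinguished left ideal I would take the submodule $\Delta'\subset\Mat_n(J)$ corresponding under $c^{-1}(\alpha):\Mat_n(J)\isoto\Vn\tensor\Delta\tensor\Vn^\tr\tensor\tau(\Delta)$ to $\Vn(\Delta)=\Vn\tensor\Delta$ tensored with a fixed basis vector of $\Vn^\tr(\tau(\Delta))$ --- this mirrors the choice of $T_\lambda$ in the proof of Lemma \ref{lem:KX}. It is a left $\Mat_n(A)$-submodule because $\Vn$ is a left $\Mat_n(\k)$-module and $\Delta$ a left $A$-module, and it is finitely generated and free over $\k$ since $\Vn$ has rank $n$ and $\Delta$ is finitely generated free. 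After identifying $(\tr\tensor\tau)(\Vn\tensor\Delta)$ with $\Vn^\tr\tensor\tau(\Delta)=\Vn^\tr(\tau(\Delta))$, the map $c^{-1}(\alpha)$ from the paragraph before the statement --- namely $c^{-1}\tensor\alpha$ followed by the interchange of the two middle tensor factors --- is the required isomorphism $\Mat_n(J)\isoto\Delta'\tensor(\tr\tensor\tau)(\Delta')$.

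Two verifications then remain: that $c^{-1}(\alpha)$ is a homomorphism of $\Mat_n(A)$-bimodules, and that it intertwines $\tr\tensor\tau$ with the flip $x\tensor y\mapsto(\tr\tensor\tau)(y)\tensor(\tr\tensor\tau)(x)$, i.e.\ makes the square of Definition \ref{def:KX} commute. Both follow formally: $c:\Vn\tensor\Vn^\tr\isoto\Mat_n(\k)$ is a $\Mat_n(\k)$-bimodule isomorphism intertwining $\tr$ with the corresponding flip (indeed $c(v_i\tensor v_j^\tr)=E_{ij}$ while $\tr(E_{ij})=E_{ji}=c(v_j\tensor v_i^\tr)$), $\alpha$ has the analogous properties over $A$ by hypothesis, and tensoring two such bimodule isomorphisms and then rearranging the tensor factors preserves both properties, by naturality of the isomorphism \eqref{eq:isom} with respect to composition together with the description of the module structure on a tensor product. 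The only point that genuinely needs care is this last bookkeeping of the symmetry isomorphisms once the two middle factors have been swapped, and I expect that (purely technical) check to be the main obstacle. Granting it, the single-cell-ideal statement combined with the passage to quotients shows that $(\Mat_n(J_j))_{j\in\set{1,r}}$ is a cell chain, so $\Mat_n(A)$ is cellular with anti-involution $\tr\tensor\tau$.
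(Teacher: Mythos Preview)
Your proposal is correct and follows essentially the same route as the paper: both reduce to the ``single cell'' computation already set up in the paragraph preceding the lemma (the map $c^{-1}(\alpha)$), pass to quotients via $\Mat_n(J_j)/\Mat_n(J_{j-1})\cong\Mat_n(\k)\tensor(J_j/J_{j-1})$, and record the evident $(\tr\tensor\tau)$-invariance. The paper's proof is terser---it also notes the induced cellular decomposition $\Mat_n(A)=\bigoplus_j\Mat_n(J'_j)$, which you leave implicit---but the substance is the same.
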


\begin{proof}
It follows from the preceding paragraph 
 that the ideals, $\Mat_n(J_j)$,  
 form a cell chain, since
 $\Mat_n(J_j) / \Mat_n(J_{j-1}) \simeq \Mat_n(\k) \tensor (J_j / J_{j-1})$ 
 as $\Mat_n(A)$-bimodules. 
It is also clear that 
$\Mat_n(A)$ has a cellular decomposition 
\[\Mat_n(A) = \bigoplus \Mat_n(J'_j)\] 
where $A= \bigoplus J'_j$ denotes a corresponding 
cellular decomposition of $A$. 
\end{proof}

\subsection{Cellularity of generalized Schur algebras}
We now describe a cellular structure 
for  generalized Schur algebras $S^A(n,d)$. 
In this case, the generalized Cauchy filtration forms a cell chain, 
with the Weyl submodules from Theorem \ref{thm:gen_Cauchy} 
as standard modules.

\begin{theorem}\label{thm:cellular}
Suppose $A$ is a cellular algebra 
with anti-involution $\tau$.  
Then the generalized Schur algebra 
$S^A(n,d)$ is a cellular algebra, 
with respect to the anti-involution 
${\bs \tau}:= (\tr \tensor \tau)^{\tensor d}$,
for all $n,d \in \N$. 
\end{theorem}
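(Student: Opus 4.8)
The plan is to verify that the generalized Cauchy filtration constructed in Section~\ref{S:Cauchy} is a cell chain for $S^A(n,d)=\Gamma^d\Mat_n(A)$ in the sense of Definition~\ref{def:KX}, and to conclude cellularity via Lemma~\ref{lem:KX}. First I would set up the input to Section~\ref{S:Cauchy} correctly. Since $A$ is cellular with cell chain $(J_j)_{j\in\set{1,r}}$ and anti-involution $\tau$, Lemma~\ref{lem:Mat} gives that $\Mat_n(A)$ is cellular with anti-involution $\tr\tensor\tau$ and cell chain $\big(\Mat_n(J_j)\big)_j$; moreover $\Mat_n(J_j)/\Mat_n(J_{j-1})\cong \Mat_n(\k)\tensor(J_j/J_{j-1})$, and composing the cellular isomorphism $\alpha_j$ for $A$ with $c^{-1}$ as in Section~\ref{ss:Matrix} yields an $(\Mat_n(A),\Mat_n(A))$-bimodule isomorphism onto $\Vn(\Delta_j)\tensor \Vn^\tr(\tau(\Delta_j))$. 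So we apply the constructions of Section~5 with the chain $\big(\Mat_n(J_j)\big)_j$ in place of $(J_j)$, with $A=B=\Mat_n(A)$ identified with its opposite via $\tr\tensor\tau$, and with $U_j=\Vn(\Delta_j)$, $V_j=\Vn^\tr(\tau(\Delta_j))$, and $\alpha_j$ as just described. Here $V_j$ is the ``dual-type'' partner: $B=A^\op$-module structures correspond to $\tau$-twisted $A$-module structures, so in the notation of Theorem~\ref{thm:gen_Cauchy} the right Weyl submodule $\V_{\bs\lambda}$ is $\tau$-identified with (a twist of) the left one $\U_{\bs\lambda}$.

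Next I would run the generalized Cauchy filtration $0=\J_\infty\subset\cdots\subset\J_{((1^d))}=\Gamma^d\Mat_n(A)=S^A(n,d)$ from~\eqref{eq:gen_Cauchy}, now a chain of $S^A(n,d)$-bimodules, and check the three requirements of a cellular decomposition for the associated $\k$-submodule decomposition $S^A(n,d)=\bigoplus_{\bs\lambda\in\bs\L}\J'_{\bs\lambda}$ from Lemma~\ref{lem:gen_decomp} (reindexed so each $\J_{\bs\lambda}$ is the two-sided ideal generated by the tail $\bigoplus_{\bs\mu\succeq\bs\lambda}\J'_{\bs\mu}$). The cell-ideal property for each successive quotient $\J_{\bs\lambda}/\J_{\bs\lambda^+}$ is exactly the content of Theorem~\ref{thm:gen_Cauchy}: it gives an $(\Gamma^dA,\Gamma^dB)=\big(S^A(n,d),S^A(n,d)^\op\big)$-bimodule isomorphism $\alpha_{\bs\lambda}\colon \J_{\bs\lambda}/\J_{\bs\lambda^+}\isoto \U_{\bs\lambda}\tensor\V_{\bs\lambda}$, with $\U_{\bs\lambda}$ a finitely generated free $\k$-module (the Weyl-module basis $\{\bar z_{\bm S,\bm T^{\bs\lambda}}\}$ indexed by standard multitableaux) which is a $\Gamma^dA$-submodule. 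Taking $\Delta_{\bs\lambda}:=\U_{\bs\lambda}$ and $\tau(\Delta_{\bs\lambda}):=\V_{\bs\lambda}$, conditions (C1) (basis by the $z_{\bm S,\bm T}$) and (C3) (the $\Gamma^dA$-module structure on $\U_{\bs\lambda}$, with the tail terms absorbed into $\J_{\bs\lambda^+}$) hold by construction, and the $\k$-submodules $\J'_{\bs\lambda}$ are finite free.

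The main obstacle — and the one step deserving genuine care — is the anti-involution compatibility: one must show that ${\bs\tau}=(\tr\tensor\tau)^{\tensor d}$ restricts to an anti-involution of $S^A(n,d)=\Gamma^d\Mat_n(A)$ (clear, since $\Gamma^d$ is a subfunctor of $\tensor^d$ and $(\tr\tensor\tau)^{\tensor d}$ commutes with the $\Si_d$-action, hence preserves $\Si_d$-invariants), that each $\J'_{\bs\lambda}$ is ${\bs\tau}$-invariant, and — crucially — that the isomorphism $\alpha_{\bs\lambda}$ intertwines ${\bs\tau}$ with the swap $x\tensor y\mapsto {\bs\tau}(y)\tensor{\bs\tau}(x)$, i.e.\ that the square in Definition~\ref{def:KX} commutes. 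The key input is part~2 of Lemma~\ref{psi}: the $\psi$-maps are compatible with the twist $\tw$, so the standard homomorphisms and the maps $\psi^{\bs\lambda}$ of~\eqref{eq:new_psi} are interchanged by ${\bs\tau}$ up to swapping the two tensor factors; combined with the $\tr$-$\tau$-equivariance of $c$ and $\alpha_j$ from~\eqref{eq:alpha2}, this forces ${\bs\tau}$ to send the basis element $z_{\bm S,\bm T}$ to $z_{\bm T,\bm S}$ (possibly with a twist of the tableau entries coming from the basis of $\Delta_j$ versus $\tau(\Delta_j)$). The diagram chase establishing ${\bs\tau}(z_{\bm S,\bm T})=z_{\bm T,\bm S}$ is the heart of the argument; once it is in place, $\tau(\Delta_{\bs\lambda})=\V_{\bs\lambda}$ is literally the ${\bs\tau}$-image of $\U_{\bs\lambda}$, the Definition~\ref{def:KX} square commutes, and Lemma~\ref{lem:KX} yields that $S^A(n,d)$ is cellular with cell datum indexed by $\bs\L$ and standard modules the generalized Weyl modules $\U_{\bs\lambda}$.
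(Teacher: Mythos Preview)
Your proposal is correct and follows essentially the same approach as the paper: use the generalized Cauchy filtration and Theorem~\ref{thm:gen_Cauchy} to produce the candidate cell chain, then reduce everything to the identity ${\bs\tau}(z_{\bm S,\bm T})=z_{\bm T,\bm S}$, established via Lemma~\ref{psi}, the $\tau$-equivariance of the $\alpha_j$'s from~\eqref{eq:alpha2}, and the naturality of $\nabla$ (Lemma~\ref{lem:natural}). The only cosmetic difference is that the paper first observes it suffices to prove $\Gamma^d A$ is cellular for an arbitrary cellular $A$ (and then specializes $A\rightsquigarrow\Mat_n(A)$), whereas you carry $\Mat_n(A)$ through the whole argument; also, to make the identity land exactly on $z_{\bm T,\bm S}$ without your parenthetical ``twist,'' one should choose the basis of $\tau(\Delta_j)$ as $y^{(j)}_b:=\tau(x^{(j)}_b)$, as the paper does.
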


\begin{proof}
If $A$ is cellular then so is $\Mat_n(A)$, by Lemma \ref{lem:Mat}. 
Since $S^A(n,d) = \G^d \Mat_n(A)$, 
it suffices to show that $\Gamma^d A$ is cellular, 
with respect to the anti-involution ${\bs \tau} = \tau^{\tensor d}$. 

Suppose that $A=J'_1 \oplus \dots \oplus J_r'$ 
is a cellular decomposition of $A$, 
with corresponding cell chain \[
0 = J_0 \subset J_1\subset \dots \subset J_r =A.\]  
For each $j\in\set{1,r}$, 
suppose $\{x^{(j)}_b\}_{b\in\B_j}$  and 
$\{y^{(j)}_b\}_{ b\in \B_j}$ are $\k$-bases
of $\Delta_j$ and  $\tau(\Delta_j)$, respectively, 
such that $y^{(j)}_b:= \tau(x^{(j)}_b)$ for all $j$, 
and let $\Delta_j$ and $\alpha_j$ 
be as in (\ref{eq:Delta_alpha}).

Considering $\bs \Lambda = \L^+(\sharp \B_1, \dots, \sharp \B_r)$ 
as a totally ordered subset of $\L^+_d(\N)^r$ 
by restricting the order $\preceq$ in Definition \ref{lex2}, 
it follows from Lemma \ref{lem:gen_decomp}
and Remark \ref{rmk:multi_tab} 
that we have decompositions 
\begin{equation}\label{eq:cell_decomp}
\Gamma^d J = 
\bigoplus_{\bs \lambda \in \bs \Lambda} \J'_{\bs \lambda}, 
\quad \text{and} \quad
\J_{\bs \lambda} = 
\bigoplus_{\bs \mu\succeq \bs \lambda} \J'_{\bs \mu} 
\quad \text{for each } \bs \lambda \in \bs \Lambda, 
\end{equation}
since $\J'_{\bs \lambda} = 0$ if  
$\bs \lambda \notin \bs \Lambda$.

Notice that  ${\bs \tau} = \tau^{\tensor d}$ coincides with the map 
$\Gamma^d(\tau): \Gamma^d A\to \Gamma^d A$
induced by the functor $\Gamma^d$. 
To complete the proof, 
we need to show that the left-hand side of (\ref{eq:cell_decomp}) gives a 
cellular decomposition of $\Gamma^d A$ 
with respect to this anti-involution. 

Let $\Delta_{\bs \lambda}$ be the 
the left Weyl submodule  
$\U_{\bs \lambda} \subset \J_{\bs \lambda}/ \J_{\bs \lambda^+}$ 
of Theorem \ref{thm:gen_Cauchy}. 
Then it remains to check the following hold
for each $\bs \lambda \in \bs \Lambda$: 
\begin{enumerate}[(i)]
\item 
${\bs \tau}(\J'_{\bs \lambda}) = \J'_{\bs \lambda}$, 
\medskip

\item 
${\bs \tau}(\Delta_{\bs \lambda}) = 
\V_{\bs \lambda }$,
\medskip

\item 
$\J_{\bs \lambda} / \J_{\bs \lambda^+}$ is a cell ideal. 
\smallskip
\end{enumerate}
Assuming (i) and (ii) hold for each $\bs \lambda$, (iii) 
will follow from the commutativity of the diagram  
\begin{equation*}
\begin{tikzcd}
\J_{\bs \lambda} / \J_{\bs \lambda^+}
\arrow[r, "\alpha_{\bs \lambda}"] 
\arrow[d, "\tau\,"' ]  
&  
\Delta_{\bs \lambda} \tensor \tau(\Delta_{\bs \lambda})  
\arrow[d, "\,x\tensor y\, \mapsto\, \tau(y)\tensor \tau(x)"] \\
\J_{\bs \lambda} / \J_{\bs \lambda^+}
\arrow[r, "\alpha_{\bs \lambda}"] 
& 
\Delta_{\bs \lambda} \tensor \tau(\Delta_{\bs \lambda})  
\end{tikzcd}
\end{equation*}
where $\alpha_{\bs \lambda}$ 
is the $\Gamma^d A$-bimodule isomorphism 
from Theorem \ref{thm:gen_Cauchy}.

Now fix $\bs \lambda \in \bs \Lambda$, 
and set $\nu = |\bs \lambda|$. 
Then 
$\J'_{\bs \lambda}$, $\Delta_{\bs \lambda}$, and $\J_{\bs \lambda}/ \J_{\bs \lambda^+}$  
have $\k$-bases given by the sets 
\[\left\{ z_{\bm S, \bm T } \mid  
\bm S, \bm T \in \st(\B_\ast) \right\}, 
\qquad
\left\{\bar{z}_{\bm S, \bm T_{\bs \lambda}} 
\mid \bm S  \in \st(\B_\ast) \right\}, 
\qquad 
\left\{\bar{z}_{\bm S, \bm T} \mid  
\bm S, \bm T \in \st(\B_\ast) \right\}\]
respectively, where 
$z_{\bm S,\bm T}\in \J'_{\bs \lambda}$ is defined in 
 \eqref{z_basis}. 
It follows that each of the conditions (i)-(iii) will be satisfied 
provided that 
$\tau(z_{\bm S, \bm T}) = z_{\bm T, \bm S}$ 
for all $\bm S, \bm T\in \st(\B_\ast)$. 

We claim that the following diagram is commutative: 
\[\begin{tikzcd}[row sep = huge, column sep = 4.3em]
\Gamma^{\bs \lambda}(\Delta_\ast) \tensor \Gamma^{\bs \lambda}(\tau(\Delta_\ast))
\arrow[r, "\psi^{\bs \lambda}" ]
\arrow[d, "\, \tw\hs{1pt} \circ {\left( \Gamma^{\bs \lambda}(\tau)\, \tensor\, \Gamma^{\bs \lambda}(\tau)\right)} " , shift right=0.1em ]
 &[-1em] \Gamma^{(\nu)}(\Delta_\ast {\tensor} \tau(\Delta_\ast))
\arrow[d, "\, \Gamma^{(\nu)}\left(\tw\hs{1pt}\circ{\left( \tau\, \tensor\, \tau \right ) }  \right) " , shift left=0.75em  ]
& \Gamma^{(\nu)}(J'_\ast) \arrow[l, "\ \Gamma^{(\nu)}(\alpha'_\ast) "' ]
\arrow[r, " \nabla^\nu" ] \arrow[d, "\, \Gamma^{(\nu)}(\tau) " ] &[-1em]
J'_\nu \arrow[d, "\, \tau " ] \\
\Gamma^{\bs \lambda}(\Delta_\ast) \tensor \Gamma^{\bs \lambda}(\tau(\Delta_\ast))
\arrow[r, "\psi^{\bs \lambda}" ]
& \Gamma^{(\nu)}(\Delta_\ast { \tensor} \tau(\Delta_\ast))
& \Gamma^{(\nu)}(J'_\ast)
\arrow[l, "\ \Gamma^{(\nu)}(\alpha'_\ast) "' ]
\arrow[r, " \nabla^\nu" ]
& J'_\nu, \end{tikzcd} \]
with the first (middle) vertical map(s) induced by the action of   
$\Gamma^{\bs \lambda}$ (resp.~$\Gamma^{(\nu)}$)
considered as a functor $\P_\k^{\times r} \to \P_\k$. 
The commutativity of the left-hand square 
can be checked 
using the definition of $\psi^{\bs \lambda}$ together with 
Lemma \ref{psi}.
The commutativity of the middle square  
follows from the functoriality 
of $\Gamma^{(\nu)}$ 
and diagram (\ref{eq:alpha2}). 
Finally, the commutativity of the right-hand square follows by Lemma \ref{lem:natural}. 
We thus have $\tau(z_{\bm S, \bm T})= z_{\bm T, \bm S}$ 
for all $\bm S, \bm T\in \st(\B_\ast)$, and the proof is complete. 
\end{proof}

Let us write 
 $\bs \Lambda^{\mathrm{op}}$ to denote the set 
$\bs \Lambda$ with opposite total ordering. 
Then it follows from the  above proofs of Lemma \ref{lem:KX}
and Theorem \ref{thm:cellular} 
that the set
\[\big\{ z_{\bm S, \bm T} \mid 
{\bs \lambda} \in \bs \Lambda^{\mathrm{op}},\  
{\bm S}, {\bm T} \in \st_{\bs \lambda}(\B_1,\dots, \B_r)  \big\}.\]
is a cellular basis for $\G^d A$. 
A corresponding cellular basis for $S^A(n,d)$ can be obtained in 
a similar way, 
by replacing $A$ by $\Mat_n(A)$. 
\smallskip

In the next example, we describe an explicit cellular basis 
for a special case of a 
generalized Schur algebra of the form $S^Z(n,d)$,  
where $Z$ is a zig-zag algebra. 
We essentially follow the definition in \cite{KM3}, 
using slightly different notation. 
Note also that we only consider $Z$ as an  
ordinary non-graded algebra, 
rather than a $\Z/2$-graded superalgebra 
as in \cite{KM3}. 

\begin{example}[Zig-zag algebra]\label{ex:zig}
We consider the zig-zag algebra 
associated to 
the quiver below.  
\[\mathscr Q : \qquad 
\begin{tikzpicture}[
baseline=-2pt, black,line width=1pt, scale=0.4,
every node/.append style={font=\fontsize{8}{8}\selectfont}   ]%
\coordinate (0) at (0,0);
\coordinate (1) at (4,0);
\coordinate (2) at (8,0);
\draw [thin, black,->,shorten <= 0.1cm, shorten >= 0.1cm]   (0) to[distance=1.5cm,out=100, in=100] (1);
\draw [thin,black,->,shorten <= 0.25cm, shorten >= 0.1cm]   (1) to[distance=1.5cm,out=-100, in=-80] (0);
\draw [thin, black,->,shorten <= 0.25cm, shorten >= 0.1cm]   (1) to[distance=1.5cm,out=80, in=100] (2);
\draw [thin,black,->,shorten <= 0.1cm, shorten >= 0.1cm]   (2) to[distance=1.5cm,out=-100, in=-80] (1);
\draw(0,0) node{$\bullet$};
\draw(4,0) node{$\bullet$};
\draw(8,0) node{$\bullet$};
\draw(0,0) node[left]{$0$};
\draw(4,0) node[right]{$1$};
\draw(8,0) node[right]{$2$};
\draw(2,1.2) node[above]{$a_{10}$};
\draw(6,1.2) node[above]{$a_{21}$};
\draw(2,-1.2) node[below]{$a_{01}$};
\draw(6,-1.2) node[below]{$a_{12}$};
  \end{tikzpicture}\]
Recall from \cite[Section 7.9]{KM3} that the {\em extended zig-zag algebra}, $\tilde{Z}$, 
is defined in this case as the quotient 
of the path algebra $\k \mathscr Q$ modulo the following 
relations: 
\begin{enumerate}
\item All paths of length three or greater are zero.
\item All paths of length two that are not cycles are zero.
\item All length-two cycles based at the same vertex are equivalent.
\item $ a_{21} a_{12}=0$.
\end{enumerate}
The length zero paths are denoted 
$e_0, e_1, e_2$
and correspond to standard idempotents,
with 
$e_i a_{ij} e_j = a_{ij}$ for all admissible $i,j$. 
Let $e:= e_0 + e_1 \in \tilde{Z}$. 
Then the corresponding {\em zig-zag algebra} is $Z:=e \tilde{Z} e \subset \tilde{Z}$. 
Then $Z$ is a cellular algebra, 
with anti-involution 
defined by 
$\tau(e_i)=e_i$ and $\tau(a_{ij}) = a_{ji}$ for all $i,j$. 

Let us describe a corresponding cellular decomposition. 
First let 
\[
x_1:=a_{12},\ \ 
x_2:=e_1,\ \ 
x_3:=a_{01},\ \ 
x_4:=e_0.
\]
and set $y_i:= \tau(x_i)$, for
$i\in \set{1,4}$.
Then we have corresponding sets 
\[
X(1):=\{x_1\},
\quad
X(2):=
\{x_2,x_3\},  
\quad
X(3):=
\{x_4\}, 
\]
and 
\[
Y(1):=\{y_1\},
\quad
Y(2):=
\{y_2,y_3\},  
\quad
Y(3):=
\{y_4\},
\]
parametrized by the totally ordered sets 
$\B_1:= \{1\}$, 
$\B_2:= \{2<3\}$, and 
$\B_3:= \{4\}$, 
respectively. 
We may then define a cellular decomposition 
\[Z= J_1' \oplus J_2'  \oplus J_3', \]
where 
$J_j' := \text{span}\{ xy \mid x\in X(j), y\in Y(j) \}$, 
for $j\in \set{1,3}$.

Now let $\bs \L^{\mathrm{op}}$ denote the set  
$\bs \L = \L_3^+(1,2,1)$
 with the opposite total ordering. 
Then one may then check using formula \eqref{z_basis}
and the  proof of Lemma \ref{lem:KX} 
that $S^Z(1,2) = \Gamma^2 Z$ has the cellular basis 
described 
in the table below, 
where 
$\bs \lambda$ runs through all multipartitions in the set 
$\bs \L^{\mathrm{op}}$,
and where 
$\mathbf S$, $\mathbf T$ denote 
standard multitableaux of shape $\bs \lambda$, respectively. 
\smallskip
 \begin{center}
 \hspace*{-1cm}
  \begin{tabular}{|c|  c c| c | }
\hline
\rule{0pt}{1.0\normalbaselineskip}
$\boldsymbol{\lambda}$  & $\mathbf{S}$ & $\mathbf{T}$ &  
$z_{\mathbf{S,T}}$ 
\\[.1cm]
\hline 
\rule{0pt}{1.25\normalbaselineskip}
$(\o,\o,(2))$ & 
$( \o, \o, \scriptsize\young(44))$ & $( \o, \o, \scriptsize\young(44))$ & 
\ $e_0^{\otimes 2}$ 
\\[.3cm]
\hline \rule{0pt}{1.25\normalbaselineskip}
$(\o,(1),(1))$ & 
$(\o, \scriptsize\young(2),\, \young(4))$ & $(\o, \scriptsize\young(2),\, \young(4))$ & 
$e_0 \ast e_1$
\\[.25cm]
 &  $''$ & 
$(\o, \scriptsize\young(3),\, \young(4))$ & 
$e_0 \ast a_{10}$
\\[.25cm]
& $(\o, \scriptsize\young(3),\, \young(4))$ & $(\o, \scriptsize\young(2),\, \young(4))$ & 
$e_0 \ast a_{01}$
\\[.25cm]
&  $''$ & $(\o, \scriptsize\young(3),\, \young(4))$ & 
$e_0 \ast ( a_{01}a_{10})$
\\[.3cm]
\hline \rule{0pt}{1.25\normalbaselineskip}
$(\o,(1,1),\o)$ & 
$(\o, \scriptsize\young(2,3), \o)$ & $(\o, \scriptsize\young(2,3), \o)$ & 
 $e_1 \ast ( a_{01}a_{10})$
\\[.3cm]
\hline \rule{0pt}{1.25\normalbaselineskip}
$(\o,(2),\o)$ & 
$(\o, \scriptsize\young(22), \o)$ & $(\o, \scriptsize\young(22), \o)$ & 
\ $e_1^{\otimes 2}$ 
\\[.25cm]
 & $''$  & $(\o, \scriptsize\young(23), \o)$ &
$ e_1 \ast a_{10}$
\\[.25cm]
 & $''$ & $(\o, \scriptsize\young(33), \o)$ &
 $a_{10}^{\otimes 2}$ 
\\[.25cm]
& 
$(\o, \scriptsize\young(23), \o)$ & $(\o, \scriptsize\young(22), \o)$ & 
$ e_1 \ast a_{01}$
\\[.25cm]
 & $''$ & $(\o, \scriptsize\young(23), \o)$ &
 $ e_1 \ast ( a_{01}a_{10})+ a_{10} \ast a_{01}$
\\[.25cm]
 & $''$ & $(\o, \scriptsize\young(33), \o)$ &
$a_{10} \ast a_{01}a_{10}$
\\[.25cm]
&
$(\o, \scriptsize\young(33), \o)$ & $(\o, \scriptsize\young(22), \o)$ &
$a_{01}^{\otimes 2}$ 
\\[.25cm]
 & $''$ & $(\o, \scriptsize\young(23), \o)$ &
$a_{01} \ast (a_{01}a_{10})$
\\[.25cm]
 & $''$ & $(\o, \scriptsize\young(33), \o)$ &
$( a_{01} a_{10})^{\otimes 2}$ 
\\[.25cm]
\hline \rule{0pt}{1.25\normalbaselineskip}
\rule{0pt}{1.25\normalbaselineskip}
$((1),\o,(1))$ & 
 $({\scriptsize\young(1)}, \o, {\scriptsize\young(4)})$ &  $({\scriptsize\young(1)}, \o, {\scriptsize\young(4)})$
 &
$(a_{12} a_{21})\ast e_0$ 
\\[.3cm]
\hline \rule{0pt}{1.25\normalbaselineskip}
$((1),(1),\o)$ & 
 $(\scriptsize\young(1), \young(2), \o)$ & $(\scriptsize\young(1), \young(2), \o)$
&
$e_1 \ast (a_{12} a_{21})$ 
\\[.25cm]
 & $''$ &  $(\scriptsize\young(1), \young(3), \o)$ &
$a_{10} \ast (a_{12} a_{21})$ 
\\[.25cm]
&  $(\scriptsize\young(1), \young(3), \o)$ &  $(\scriptsize\young(1), \young(2), \o)$ &
$a_{01} \ast (a_{12} a_{21})$ 
\\[.25cm]
& $''$  &  $(\scriptsize\young(1), \young(3), \o)$ &
$(a_{01}  a_{10}) \ast (a_{12} a_{21})$ 
\\[.3cm]
\hline \rule{0pt}{1.25\normalbaselineskip}
$((2),\o,\o)$ & 
 $({\scriptsize\young(11)}, \o, \o)$ & $({\scriptsize\young(11)}, \o, \o)$
&
$(a_{12} a_{21})^{\otimes 2}$ 
\\[.35cm]
\hline
\end{tabular}
\hspace*{-1cm}
\end{center}
\medskip
The symbol, $\o$, is used above to denote 
an empty partition or tableau, respectively, 
and the symbol $''$ denotes a repeated item  
from the above entry.

\end{example}

\subsection{Cellularity of wreath products  $A\wr \Si_d$}
Let us first recall a result of \cite{KX} concerning idempotents fixed by 
an anti-involution. 
\begin{lemma}[\cite{KX}]\label{lem:KX2}
Let $A$ be a cellular algebra with anti-involution $\tau$. 
If $e\in A$  is an idempotent fixed by $\tau$, 
then the algebra $eAe$ is cellular with respect to the 
restriction of $\tau$. 
\end{lemma}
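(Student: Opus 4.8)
The plan is to reduce everything to the basis-free characterization of cellularity (Lemma~\ref{lem:KX}) and the cell-ideal criterion of Definition~\ref{def:KX}, following the argument sketched in \cite{KX}. Fix a cell chain $0 = J_0 \subset J_1 \subset \cdots \subset J_r = A$ of $\tau$-invariant two-sided ideals of $A$, and pass to the chain $\hat J_j := e J_j e$ inside $eAe$. Each $\hat J_j$ is a two-sided ideal of $eAe$ since $A J_j A \subseteq J_j$, and $\tau(\hat J_j) = \tau(e)\tau(J_j)\tau(e) = \hat J_j$ because $\tau(e) = e$; moreover $\hat J_0 = 0$ and $\hat J_r = eAe$. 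After deleting any repeated terms, this is the candidate cell chain for $eAe$, so the task is to show that each successive quotient $\hat J_j / \hat J_{j-1}$ is a cell ideal of $eAe / \hat J_{j-1}$ with respect to the restricted anti-involution.

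The first step to nail down is the passage to quotients. One checks $eAe \cap J_{j-1} = e J_{j-1} e$ (if $x = exe$ lies in $J_{j-1}$ then $x \in e J_{j-1} e$), so the canonical map $eAe \to \bar e\,(A/J_{j-1})\,\bar e$ sending $eae \mapsto \bar e \bar a \bar e$ induces an isomorphism $eAe / e J_{j-1} e \cong \bar e\,(A/J_{j-1})\,\bar e$ carrying $\hat J_j / \hat J_{j-1}$ onto $\bar e\,(J_j/J_{j-1})\,\bar e$. Since $\bar e$ is an idempotent of $A/J_{j-1}$ fixed by the induced anti-involution and $J_j/J_{j-1}$ is a cell ideal of $A/J_{j-1}$, this reduces the statement to the single-cell-ideal case: if $J$ is a cell ideal of a cellular algebra $B$ with defining isomorphism $\alpha\colon J \isoto \Delta \tensor \tau(\Delta)$ and $e = \tau(e) \in B$, then $eJe$ is a cell ideal of $eBe$ (or is zero, in which case the corresponding term is dropped from the chain).

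For the single cell ideal I would restrict $\alpha$. Using that $\Delta$ is a left ideal with $B\Delta \subseteq \Delta$, one verifies $\tau(e\Delta) = \tau(\Delta) e$ and that, under the $B$-bimodule structure on $\Delta \tensor \tau(\Delta)$ through which $\alpha$ is a bimodule map, the sub-$\k$-module $e\,(\Delta \tensor \tau(\Delta))\,e$ equals $e\Delta \tensor_\k \tau(e\Delta)$; since $\alpha$ is a bimodule isomorphism it then restricts to an isomorphism $eJe \isoto e\Delta \tensor_\k \tau(e\Delta)$. Take $\Delta' := e\Delta$: it is a left ideal of $eBe$ because $eBe \cdot e\Delta \subseteq e(B\Delta) \subseteq e\Delta$, it is finitely generated over $\k$, and it is a $\k$-module direct summand of $\Delta$ via $\Delta = e\Delta \oplus (1-e)\Delta$. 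The commutative square relating $\alpha' := \alpha|_{eJe}$ to the swap $x \tensor y \mapsto \tau(y) \tensor \tau(x)$ is inherited directly from the corresponding square for $\alpha$ in Definition~\ref{def:KX}. Since $eBe$ is $\tau$-stable, the restriction of $\tau$ is an anti-involution of $eBe$, and $(eJe, \Delta', \alpha')$ witnesses the cell-ideal property; assembling these for all $j$ yields a cell chain of cell ideals in $eAe$, hence a cellular structure on $eAe$ by Lemma~\ref{lem:KX}.

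The step I expect to require the most care is not conceptual but bookkeeping together with one integrality point: verifying at the module level that $\alpha$ carries $eJe$ \emph{onto} $e\Delta \tensor \tau(e\Delta)$ (not merely into it) hinges on pinning down the precise right $B$-action on $\Delta \tensor \tau(\Delta)$, and one must keep the identifications $eAe/eJ_{j-1}e \cong \bar e\,(A/J_{j-1})\,\bar e$ straight through the induction. A secondary point worth a remark is the freeness of $\Delta' = e\Delta$ over $\k$: it is automatically finitely generated projective as a direct summand of the free module $\Delta$, and one should either check directly that the relevant $\Delta'$ occurring in the applications of this paper are free, or invoke the treatment in \cite{KX}; one also needs, as there, a $\tau$-equivariant splitting of each $J_j \onto J_j/J_{j-1}$ to exhibit an actual cellular decomposition, which is supplied by the construction in the proof of Lemma~\ref{lem:KX}.
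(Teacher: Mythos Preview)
The paper does not supply its own proof of this lemma; it merely cites K\"onig--Xi \cite{KX} and moves on. Your argument is the standard one from that reference and is correct: truncate the cell chain by $e(-)e$, identify $eAe/eJ_{j-1}e$ with $\bar e\,(A/J_{j-1})\,\bar e$, and restrict the defining bimodule isomorphism $\alpha_j$ to obtain $eJe \isoto e\Delta \tensor \tau(e\Delta)$ with the required $\tau$-compatibility. Your caveat about freeness of $e\Delta$ over a general noetherian integral domain is well placed---this is exactly the point where one must either restrict $\k$ (as in \cite{KX}, where finitely generated projective implies free) or check freeness by hand in the intended application; the same remark applies to producing the $\tau$-invariant $\k$-complements $eJ'_je$ needed for a cellular decomposition in the sense of Definition~\ref{def:KX}, which you correctly note must be extracted from a lifted cellular basis rather than taken for granted.
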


We then have the following consequence of Theorem \ref{thm:cellular}, 
which is obtained via generalized Schur-Weyl duality. 

\begin{corollary}\label{wreath}
Suppose $d\in \N$.  If $A$ is a cellular algebra, 
then $A\wr \Si_d$ is also cellular.
\end{corollary}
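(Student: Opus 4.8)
The plan is to realize $A\wr\Si_d$ as a corner algebra $eBe$ inside a generalized Schur algebra $B$ and then apply Theorem~\ref{thm:cellular} together with Lemma~\ref{lem:KX2}. Concretely, generalized Schur--Weyl duality in the form of Proposition~\ref{prop:EK}(ii) identifies $A\wr\Si_d$ with $\xi_\omega S^A(n,d)\xi_\omega$ whenever $n\geq d$, where $\omega=(1^d)\in\L_d(n)$ and $\xi_\omega$ is the associated idempotent of the classical Schur subalgebra $S(n,d)\subset S^A(n,d)$. Since a corner $eBe$ at a $\tau$-fixed idempotent $e$ of a cellular algebra $B$ is again cellular, it suffices to know that $S^A(n,d)$ is cellular and that $\xi_\omega$ is fixed by its anti-involution.

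So first I would fix any $n\geq d$ (for instance $n=d$), and invoke Theorem~\ref{thm:cellular} to obtain that $S^A(n,d)$ is cellular with respect to $\bs\tau=(\tr\tensor\tau)^{\tensor d}$. Next I would verify $\bs\tau(\xi_\omega)=\xi_\omega$: the element $\xi_\omega=\xi_{\i_\omega,\i_\omega}$ lies in the classical Schur subalgebra $S(n,d)=\Gamma^d\Mat_n(\k)$, on which $\bs\tau$ restricts to $\tr^{\tensor d}$, and (since $\i_\omega$ has pairwise distinct entries) $\xi_\omega$ is a sum over $\sigma\in\Si_d$ of symmetrized tensors each of whose factors is a diagonal matrix unit $E_{ii}\tensor 1$. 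As $\bs\tau$ acts slotwise --- it does not permute tensor factors, the multiplication on $\Gamma^d\Mat_n(A)$ being componentwise --- and each $E_{ii}\tensor 1$ is fixed by $\tr\tensor\tau$, every such summand, hence $\xi_\omega$ itself, is $\bs\tau$-invariant. Then Lemma~\ref{lem:KX2} shows that $\xi_\omega S^A(n,d)\xi_\omega$ is cellular with respect to the restriction of $\bs\tau$. Finally, the algebra isomorphism $A\wr\Si_d\isoto\xi_\omega S^A(n,d)\xi_\omega$ of Proposition~\ref{prop:EK}(ii) transports this structure --- a cell datum, equivalently a cellular decomposition, pulls back along any algebra isomorphism --- so $A\wr\Si_d$ is cellular, with the induced anti-involution.

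The argument is largely formal once Theorem~\ref{thm:cellular} is available; the single point needing a short verification is the $\bs\tau$-invariance of $\xi_\omega$, which is precisely what allows Lemma~\ref{lem:KX2} to be applied. It is also worth noting at the outset that the restriction $n\geq d$ is harmless, since $n$ is a free parameter and $A\wr\Si_d$ is independent of $n$, so we may choose $n$ as large as we wish.
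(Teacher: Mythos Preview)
Your proposal is correct and follows essentially the same route as the paper's proof: fix $n\geq d$, apply Theorem~\ref{thm:cellular} to $S^A(n,d)$, verify $\bs\tau(\xi_\omega)=\xi_\omega$, and conclude via Proposition~\ref{prop:EK}(ii) and Lemma~\ref{lem:KX2}. The paper's verification of $\bs\tau$-invariance is the one-line computation $\bs\tau(\xi_\omega)=(E_{1,1})^{\tr}\ast\cdots\ast(E_{d,d})^{\tr}=\xi_\omega$, which amounts to the same check you give in slightly more detail.
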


\begin{proof}
Fix some $n \geq d$.   Write $S^A = S^A(n,d)$,  
and let $e \in S^A$ 
denote the idempotent $e:=\xi_\omega$. 
It then follows by Proposition \ref{prop:EK}.(ii) that 
there is an algebra isomorphism 
$A\wr \Si_d \cong e \hp{3} S^A \hp{1} e$. 
Since 
\[\bs \tau(e) = 
(E_{1,1})^\tr \ast \cdots \ast (E_{d,d})^\tr = e,\]
the cellularity of $A\wr \Si_d$
follows from Theorem \ref{thm:cellular} and Lemma \ref{lem:KX2}. 
\end{proof}

Since the above result holds for an arbitrary cellular algebra $A$, 
we thus obtain an alternate proof of the main results of \cite{GG} and \cite{RGr} 
mentioned in the introduction. 

\end{document}